\documentclass[a4paper]{amsart}

\usepackage[T2A]{fontenc}
\usepackage[utf8]{inputenc}
\usepackage{geometry}
\usepackage{graphicx}

\usepackage{booktabs}
\usepackage{siunitx}

\usepackage{amsmath,amssymb,amsthm,mathtools}
\usepackage{hyperref}
\usepackage{url}
\usepackage{tikz}

\newtheorem{theorem}{Theorem}
\newtheorem{lemma}{Lemma}
\newtheorem{corollary}{Corollary}

\theoremstyle{definition}
\newtheorem{definition}{Definition}

\newtheorem{example}{Example}
\newtheorem*{convention}{Convention}

\theoremstyle{remark}
\newtheorem{remark}{Remark}

\newcommand{\R}{\mathbb {R}}
\newcommand{\Z}{\mathbb {Z}}
\newcommand{\F}{\mathbb {F}}

\newcommand{\oeislink}[1]{\href{https://oeis.org/#1}{\textcolor{blue}{\underline{#1}}}}

\author{Yury Belousov}
\address{Yury Belousov \\ Leonhard Euler International Mathematical Institute in Saint Petersburg} 
\email{bus99@yandex.ru}

\author{Viktoriia Georgievskaia}
\address{Viktoriia Georgievskaia \\ Saint Petersburg State University} 
\email{vika-g-2005@mail.ru}

\thanks{This work was funded by the Russian Science Foundation, grant No. 25-11-00251, \href{https://rscf.ru/project/25-11-00251/}{https://rscf.ru/project/25-11-00251/}.}

\title{S-meandric Permutations and Tangency Polynomials}

\begin{document}
\begin{abstract}
     A meander is a configuration of two simple plane curves intersecting transversely. The orders of their intersection points define a permutation that determines the configuration. When tangencies are allowed, however, different configurations can share the same permutation. We study the combinatorial and algebraic structures arising from this non-uniqueness. We give a realization criterion and show that the realizations of each realizable permutation form an affine space over the two-element field. We describe this space using an associated graph, called the component spine. We prove that the component spine of every permutation is a cactus. We also introduce the tangency polynomial, which counts realizations by their number of tangencies, investigate its properties, and prove that it factors over the cycles and bridges of the component spine. We derive a central limit theorem for tangency counts and obtain asymptotic formulas for the number of distinct tangency polynomials.
\end{abstract}
\maketitle

\section{Introduction}
A meander is a configuration of a pair of simple plane curves. 
In the non-singular case, all intersection points of the two curves are transverse (see examples in Fig.~\ref{fig:meander}). The problem of counting such configurations was formulated by V. Arnol’d~\cite{Arnold88BranchedCovering}; a closely related problem for closed meanders, under the name of planar permutations, was considered earlier by P. Rosenstiehl~\cite{Rosenstiehl84PlanarPermutations}. Meanders have subsequently appeared in enumerative combinatorics, the theory of noncrossing partitions, statistical physics, low-dimensional topology, and the geometry of moduli spaces. We refer the reader to the survey~\cite{Zvonkin23MeandersPersonalPerspective} for historical background and an overview of these connections. Despite considerable progress, the general enumeration problem and the precise asymptotic behavior of meander numbers remain open.

\begin{figure}[h]
    \centering
\begin{tikzpicture}[scale=5.5,
    intersection label/.style={font=\scriptsize, inner sep=0.5pt}
]

\draw[thick] (0,0) -- (1,0);
\draw[ultra thick, ->] (0.0158771,0.125)
    to[out=0, in=90, distance=10.9956] (0.875,0)
    to[out=-90, in=-90, distance=1.5708] (0.75,0)
    to[out=90, in=90, distance=1.5708] (0.625,0)
    to[out=-90, in=-90, distance=1.5708] (0.5,0)
    to[out=90, in=90, distance=1.5708] (0.375,0)
    to[out=-90, in=-90, distance=1.5708] (0.25,0)
    to[out=90, in=90, distance=1.5708] (0.125,0)
    to[out=-90, in=180, distance=10.9956] (0.984123,-0.125);

\foreach \x [count=\i] in
    {0.106,0.27,0.355,0.52,0.604,0.77,0.9}
    \node[intersection label, above=2pt] at (\x,0) {\i};

\node at (0.5,-0.35) {$(7,6,5,4,3,2,1)$};
\end{tikzpicture}
\hspace{2cm}
\begin{tikzpicture}[scale=5.5,
    intersection label/.style={font=\scriptsize, inner sep=0.5pt}
]

\draw[thick] (0,0) -- (1,0);
\draw[ultra thick, ->] (0.0355579,0.185185)
    to[out=0, in=90, distance=6.98132] (0.555556,0)
    to[out=-90, in=-90, distance=1.39626] (0.444444,0)
    to[out=90, in=90, distance=4.18879] (0.111111,0)
    to[out=-90, in=-90, distance=9.77384] (0.888889,0)
    to[out=90, in=90, distance=1.39626] (0.777778,0)
    to[out=-90, in=-90, distance=6.98132] (0.222222,0)
    to[out=90, in=90, distance=1.39626] (0.333333,0)
    to[out=-90, in=-90, distance=4.18879] (0.666667,0)
    to[out=90, in=180, distance=4.18879] (0.964442,0.185185);

\foreach \x [count=\i] in
    {0.08,0.20,0.355,0.47,
     0.58,0.64,0.755,0.915}
    \node[intersection label, above=1.5pt] at (\x,0) {\i};

\node at (0.5,-0.35) {$(5,4,1,8,7,2,3,6)$};
\end{tikzpicture}
    \caption{Meanders and their permutations.}
    \label{fig:meander}
\end{figure}

It is convenient to encode a meander by a permutation. The idea is already illustrated in Fig.~\ref{fig:meander}, and for a formal definition, see Definition~\ref{def:meander permutation}. The permutation uniquely determines meanders, and the question of whether a given permutation is meandric (that is, whether it arises as the permutation of some meander) has been solved using several different methods in~\cite{Rosenstiehl84PlanarPermutations, RosenstiehlTarjan84GaussCodes, HoffmannMehlhornRosenstiehlTarjan86JordanSequences, ShtyllaTraldiZulli09Realization, Lopatkin2022Meandric}.

The situation changes when non-transverse intersections are allowed. Such meanders are called singular. Singular meanders arose in the study of meander factorization: the insertion and decomposition operations are most naturally expressed in a class allowing both transverse intersections and tangencies (see~\cite{Belousov26PrimeFactorization, Belousov25SingularMeanders}). For singular meanders, the permutation itself is no longer a complete invariant, and distinct singular meanders can share the same permutation; see an example in Fig.~\ref{fig:meanders with same permutation}. The aim of this paper is to study this non-uniqueness and the additional structure carried by the set of realizations of a permutation.

We call a permutation \emph{s-meandric} if it is realized by some singular meander. To study the tangencies in its realizations, we introduce the \emph{tangency polynomial} $T_\pi(x)$: its coefficient of $x^k$ is the number of realizations with exactly $k$ non-transverse intersections. For example, the four realizations in Fig.~\ref{fig:meanders with same permutation} give $T_{(3,2,1)}(x)=1+3x^2$. The main areas of our research are the following.

\subsection*{Structure of the realizations}
In Section~\ref{sec:realization criteria} we describe the set of realizations of a permutation using
its interlacement graph. We prove that a permutation is s-meandric if and only if this graph is bipartite (Theorem~\ref{thm:realization criteria}), and that its realizations correspond bijectively to normalized proper $2$-colorings (Theorem~\ref{thm:degree of realizability}).

In Section~\ref{sec:component spine} we describe how the choice of a proper coloring affects the intersection types. We introduce the \emph{component spine}, which records how the connected components of the interlacement graph occur along the permutation. Our main structural result is that the component spine of every permutation is a cactus: each edge belongs to at most one simple cycle (Theorem~\ref{thm:cactus}). For an s-meandric permutation, we also use this graph to describe the natural affine-space structure over $\mathbb{F}_2$ carried by its set of realizations (Theorem~\ref{thm:cut space}).

\subsection*{Tangency polynomials}
In Section~\ref{sec:tangency polynomial} we study tangency polynomials. The main result of the section is the block factorization theorem (Theorem~\ref{thm:block factorization}): it allows us to study tangencies separately on the bridges and cycle blocks of the component spine. In the subsequent subsections we establish this factorization and study its consequences. Among all of them we mentioned here the log-concavity/unimodality of the tangency polynomial coefficients (Theorem~\ref{thm:polynomial coefficients}) and the central limit theorem (Theorem~\ref{thm:central limit theorem}).

\subsection*{Enumeration and asymptotics}
In Section~\ref{sec:asymptotics and enumeration} we consider the problem of counting s-meandric permutations of a given order. We prove that these numbers have a well-defined exponential growth rate and give upper and lower bounds for it (Theorem~\ref{thm:estimate on s-meandric number}). We also compute the numbers up to order~$24$.

In Section~\ref{sec:number-tangency-polynomials} we consider the problem of counting distinct tangency polynomials (as different collections of block factors may give the same polynomial). We show that suitable collections of these factors can be realized (Theorem~\ref{thm:visible realization}). Then we describe all multiplicative identities between block factors (Lemma~\ref{lem:block factors identity}) and obtain a unique normal form (Theorem~\ref{thm:polynomial normal form}). Together, these results allow us to obtain precise asymptotic for the number of distinct tangency polynomials (Theorems~\ref{thm:tangecy polinomial asymptotic} and \ref{thm:ordinary tangecy polinomial asymptotic}). More precisely, let $N_n$ be the number of distinct nonzero tangency polynomials of permutations in $S_n$, and let $O_n$ be the number of those arising from meandric permutations. Then
$$
    N_n \sim \frac{\exp(\pi\sqrt{n})\log n}{8\pi^2},
    \qquad
    O_n \sim
    \frac{\exp\bigl(\pi\sqrt{2n/3}\bigr)}{2\pi\sqrt{2n}}
$$
as $n\to\infty$. Enumeration data are collected in Appendix~\ref{appendix}.

\subsection*{Acknowledgments}
    The authors would like to thank Andrey Malyutin and Vadim Stepaniuk for many fruitful discussions. 

\section{Realization criteria}\label{sec:realization criteria}
Since we are primarily interested in the combinatorial aspects of meanders, we use a normalized planar model in which the intersection points are fixed and the arcs between successive intersections are semicircles. Each meander is then represented canonically by finite data. Because irrelevant topological choices have been removed, no equivalence relation is needed, and the passage from meanders to their permutations becomes direct. Compare this with the more topological treatment presented in~\cite{Belousov26PrimeFactorization}.

\begin{definition}\label{def:singular meanders}
Let $\ell=\R\times\{0\}$, $H^\pm=\left\{(x,y) \in \R^2 \mid \pm y>0\right\}$, and for $k \in \Z$, put $p_k=(k,0)\in \R^2$. A \emph{singular meander of order $n \geq 1$} is a simple arc $M \subset \R^2$ from $p_0$ to $p_{n+1}$ such that 
\begin{itemize}
    \item $M\cap\ell=\{p_0,p_1,\ldots,p_{n+1}\}$;
    \item the closure of every component of $M\setminus\ell$ is a semicircle with diameter contained in $\ell$;
    \item the semicircle incident to $p_0$ lies in $H^+$.
\end{itemize}

For $1\leq k\leq n$, an intersection point $p_k$ is called a \emph{touch} if the two semicircles incident to it lie in the same half-plane, either $H^+$ or $H^-$. We denote the number of touches of $M$ by $\operatorname{t}(M)$. A singular meander $M$ is said to be \emph{non-singular} if $\operatorname{t}(M)=0$.
\end{definition}

\begin{definition}\label{def:meander permutation}
    Let $M$ be a singular meander of order $n$. A permutation $\pi \in S_n$ is called \emph{associated with $M$} if, when traversing $M$ from $p_0$ to $p_{n+1}$, we encounter its intersection points in the order $p_{\pi(1)},p_{\pi(2)},\dots,p_{\pi(n)}$. 
    Equivalently, we say that $M$ \emph{realizes} $\pi$.
\end{definition}

We write permutations in one-line notation, $\pi=(\pi(1),\dots,\pi(n))$.

\begin{remark}
    Note that a permutation may be realized by several singular meanders, but by at most one non-singular meander. See Fig.~\ref{fig:meanders with same permutation} for an example.
\end{remark}

\begin{figure}[ht]
    \centering
    \begin{tikzpicture}[scale = 2.5]
    \draw[thick] (-0.05, 0) to (1.05, 0);
    \draw[ultra thick] (0.0, 0.0)
    	to[out = 90, in = 90, distance = 9.42478] (0.75, 0)
    	to[out = -90, in = -90, distance = 3.14159] (0.5, 0)
    	to[out = 90, in = 90, distance = 3.14159] (0.25, 0)
    	to[out = -90, in = -90, distance = 9.42478] (1, 0);
    \end{tikzpicture}
    \hspace{0.4cm}
    \begin{tikzpicture}[scale = 2.5]
    \draw[thick] (-0.05, 0) to (1.05, 0);
    \draw[ultra thick] (0, 0)
    	to[out = 90, in = 90, distance = 9.42478] (0.75, 0)
    	to[out = 90, in = 90, distance = 3.14159] (0.5, 0)
    	to[out = 90, in = 90, distance = 3.14159] (0.25, 0)
    	to[out = -90, in = -90, distance = 9.42478] (1, 0);
    \end{tikzpicture}
    \hspace{0.4cm}
    \begin{tikzpicture}[scale = 2.5]
    \draw[thick] (-0.05, 0) to (1.05, 0);
    \draw[ultra thick] (0, 0)
    	to[out = 90, in = 90, distance = 9.42478] (0.75, 0)
    	to[out = 90, in = 90, distance = 3.14159] (0.5, 0)
    	to[out = -90, in = -90, distance = 3.14159] (0.25, 0)
    	to[out = -90, in = -90, distance = 9.42478] (1, 0);
    \end{tikzpicture}
    \hspace{0.4cm}
    \begin{tikzpicture}[scale = 2.5]
    \draw[thick] (-0.05, 0) to (1.05, 0);
    \draw[ultra thick] (0, 0)
    	to[out = 90, in = 90, distance = 9.42478] (0.75, 0)
    	to[out = -90, in = -90, distance = 3.14159] (0.5, 0)
    	to[out = -90, in = -90, distance = 3.14159] (0.25, 0)
    	to[out = -90, in = -90, distance = 9.42478] (1, 0);
    \end{tikzpicture}
    \caption{All singular meanders realizing the permutation $(3,2,1)$.}
    \label{fig:meanders with same permutation}
\end{figure}

\begin{definition}\label{def:s-meandric permutation}
    A permutation $\pi\in S_n$ is called \emph{s-meandric} if it is realized by some singular meander. It is called \emph{meandric} if it is realized by some non-singular meander.
\end{definition}

\begin{example}
    The permutation $(2,4,1,3)$ is not s-meandric, $(2,1,3,4)$ is s-meandric but not meandric, and $(3,2,1,4)$ is meandric.
\end{example}

\begin{definition} \label{def:interlacement graph}
    Let $\pi\in S_n$ be a permutation. Extend it by setting $\pi(0)=0$ and $\pi(n+1)=n+1$. For each $i\in\{0,\dots,n\}$, the pair $A_i=(\pi(i),\pi(i+1))$ is called an \emph{arc} of $\pi$. Two arcs $(a_1,a_2)$ and $(b_1,b_2)$ are said to \emph{interlace} if either
    $$\min(a_1,a_2)<\min(b_1,b_2)<\max(a_1,a_2)<\max(b_1,b_2)$$
    or
    $$\min(b_1,b_2)<\min(a_1,a_2)<\max(b_1,b_2)<\max(a_1,a_2).$$
    The \emph{interlacement graph} of $\pi$ is the graph whose vertices are $A_0,\dots,A_n$ and in which two vertices are joined by an edge if and only if the corresponding arcs interlace.
\end{definition}

\begin{convention}
    Throughout this paper, we denote the interlacement graph of a permutation $\pi$ by $G_\pi$.
\end{convention}

\begin{theorem}\label{thm:realization criteria}
    A permutation is s-meandric if and only if its interlacement graph is bipartite.
\end{theorem}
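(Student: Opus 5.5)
A singular meander realizing $\pi$ is determined by choosing, for each arc $A_i=(\pi(i),\pi(i+1))$, $0\le i\le n$, a half-plane $H^+$ or $H^-$ in which the semicircle with diameter $[p_{\pi(i)},p_{\pi(i+1)}]$ is drawn, subject to the normalization that $A_0$ lies in $H^+$. I would prove both directions by showing that the conditions in Definition~\ref{def:singular meanders} reduce to one requirement: every pair of interlacing arcs must be assigned opposite half-planes. Then a realization is the same thing as a proper $2$-coloring of $G_\pi$, with the colors $H^\pm$ and with $A_0$ colored $H^+$.

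\textbf{Key lemma.} The key geometric lemma concerns two semicircles with diameters $[a_1,a_2]$ and $[b_1,b_2]$ on $\ell$, drawn in the same open half-plane, where all four endpoints are among the distinct points $p_0,\dots,p_{n+1}$, except that consecutive arcs share an endpoint. I would show that the two semicircles intersect off $\ell$ if and only if the intervals interlace in the sense of Definition~\ref{def:interlacement graph}.
\begin{itemize}
\item If the intervals are nested or disjoint, the semicircles are disjoint in the open half-plane: concentric or nested discs have nested boundaries, and disjoint diameters give disjoint discs.
\item If two arcs share an endpoint, their intervals cannot interlace under the strict inequalities. Their semicircles then meet only at that common point on $\ell$, since they are both tangent to the vertical there with different radii, or lie on opposite sides.
\item If the intervals interlace, the semicircles cross exactly once in the half-plane. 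This follows from an intermediate value argument: one endpoint of the second semicircle lies inside the first disc and the other lies outside.
\end{itemize}
Semicircles in opposite half-planes never meet off $\ell$.

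\textbf{Both directions.}
\begin{itemize}
\item ($\Rightarrow$) Given a realization $M$, simplicity of $M$ forces interlacing arcs to lie in opposite half-planes. Hence the side assignment is a proper $2$-coloring, and $G_\pi$ is bipartite.
\item ($\Leftarrow$) Given a proper $2$-coloring, first swap colors if necessary so that $A_0$ gets $H^+$; this is harmless, since a proper $2$-coloring with colors swapped is still proper. Draw each semicircle in its assigned half-plane. The union is an arc from $p_0$ to $p_{n+1}$. By the lemma it has no self-intersections off $\ell$. On $\ell$ it meets exactly $p_0,\dots,p_{n+1}$, each visited once because $\pi$ extended by $0$ and $n+1$ is a bijection onto $\{0,\dots,n+1\}$. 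So it is a simple arc satisfying every bullet of Definition~\ref{def:singular meanders}, and by construction it realizes $\pi$.
\end{itemize}

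\textbf{Main obstacle.} The main obstacle is the careful treatment of the shared-endpoint and boundary cases in the lemma, in particular confirming that adjacent arcs $A_i, A_{i+1}$ never need to be forced apart. This is exactly what allows touches, and it is why bipartiteness rather than a stronger condition is the right criterion. I would handle it by directly comparing the circles $(x-c)^2+y^2=r^2$: two such circles with distinct centers on the $x$-axis meet in points symmetric about $\ell$. So two circles of this family meet the open upper half-plane in at most one point. A short computation then shows that this point exists exactly in the interlacing configuration.
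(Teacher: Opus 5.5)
Your proposal is correct and follows essentially the same route as the paper. The paper likewise records each semicircle's half-plane as a $2$-coloring, reads off properness from simplicity of $M$, and reverses the construction from a proper coloring with $A_0$ colored $1$. The only difference is that the paper states the geometric fact (same-side semicircles meet off $\ell$ if and only if their endpoints strictly alternate) as a bare observation, whereas you verify it case by case.
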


\begin{proof}
    Let $\pi\in S_n$. Suppose that $M$ realizes $\pi$. For $0\leq i\leq n$, let $s_i$ be the semicircle of $M$ with endpoints $p_{\pi(i)}$ and $p_{\pi(i+1)}$. Color $A_i$ by $1$ or $0$ according to whether $s_i\setminus\ell$ lies in $H^+$ or $H^-$. Two distinct semicircles in the same half-plane meet away from $\ell$ if and only if their endpoints strictly alternate. Since $M$ is simple, interlacing arcs receive different colors. Thus this coloring is proper, and $G_\pi$ is bipartite.

    Conversely, suppose that $G_\pi$ is bipartite, and choose a proper $2$-coloring with $A_0$ colored $1$. For each $i\in\{0,\dots,n\}$, draw the semicircle $s_i$ with endpoints $p_{\pi(i)}$ and $p_{\pi(i+1)}$ in the half-plane prescribed by the color of $A_i$. The same observation shows that these semicircles meet only at the shared endpoints of consecutive ones. Their union is therefore a simple arc from $p_0$ to $p_{n+1}$, passing through $p_{\pi(1)},\dots,p_{\pi(n)}$ in this order. The choice of the color of $A_0$ ensures the normalization at $p_0$, so this arc is a singular meander realizing $\pi$.
\end{proof}

\begin{remark}
    The analogous criterion for closed singular meanders appears in the paper of Panayotopoulos~\cite[Section~3, p.~18]{Panayotopoulos89GeneralizedPlanarPermutations}, where the associated circular permutations are called \emph{generalized planar permutations}.
\end{remark}

\begin{definition}\label{def:degree of realizability}
    Let $\pi\in S_n$. The \emph{degree of realizability} of $\pi$, denoted by $d(\pi)$, is the number of singular meanders that realize $\pi$.
\end{definition}

\begin{theorem}\label{thm:degree of realizability}
    Singular meanders realizing a permutation $\pi$ are in bijection with proper $2$-colorings of $G_\pi$ in which the arc $A_0$ of $\pi$ is colored $1$. Consequently, the degree of realizability of an s-meandric permutation $\pi$ is given by
    $$
    d(\pi)=2^{q-1},
    $$
    where $q$ is the number of connected components of $G_\pi$.
\end{theorem}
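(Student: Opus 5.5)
The plan is to build the bijection from the two constructions already used in the proof of Theorem~\ref{thm:realization criteria}, then count colorings.

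\textbf{Forward map.} Given a singular meander $M$ realizing $\pi$, let $s_i$ be the semicircle of $M$ joining $p_{\pi(i)}$ and $p_{\pi(i+1)}$. Color $A_i$ by $1$ if $s_i\subset \overline{H^+}$ and by $0$ otherwise.
\begin{itemize}
\item By the first half of the proof of Theorem~\ref{thm:realization criteria}, this coloring is proper.
\item By the normalization in Definition~\ref{def:singular meanders}, $A_0$ receives color $1$.
\end{itemize}
Call this map $\Phi$.

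\textbf{Backward map.} Given a proper coloring with $A_0$ colored $1$, draw each $s_i$ in the half-plane prescribed by its color. The second half of the same proof shows the result is a singular meander realizing $\pi$. Call this map $\Psi$.

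\textbf{$\Phi$ and $\Psi$ are mutually inverse.} The key point is that a singular meander is determined by $\pi$ together with the half-plane of each semicircle. Indeed, by Definition~\ref{def:meander permutation}, the $i$-th component of $M\setminus\ell$ in traversal order has endpoints $p_{\pi(i)}$ and $p_{\pi(i+1)}$, with $\pi(0)=0$ and $\pi(n+1)=n+1$ since $M$ runs from $p_0$ to $p_{n+1}$. Its closure is a semicircle with that diameter, so only its side is free. Hence $\Psi\circ\Phi=\mathrm{id}$. The identity $\Phi\circ\Psi=\mathrm{id}$ is immediate from the construction. This bookkeeping, confirming that no other data distinguishes meanders in the normalized model, is the only real point to check. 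It is mild, because the model was chosen precisely so that no equivalence relation is needed.

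\textbf{Counting.} If $\pi$ is s-meandric, $G_\pi$ is bipartite by Theorem~\ref{thm:realization criteria}. A proper $2$-coloring of a bipartite graph is determined independently on each connected component. On a connected bipartite component there are exactly two proper colorings, which are swaps of each other: the color of one vertex propagates along paths, and consistency follows from bipartiteness. So $G_\pi$ has $2^q$ proper colorings in total. Fixing the color of $A_0$ pins down the coloring on the component containing $A_0$ and halves the count. Therefore $d(\pi)=2^{q-1}$.
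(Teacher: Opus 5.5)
Your proposal is correct and follows the paper's proof essentially step for step. The bijection comes from the two constructions in the proof of Theorem~\ref{thm:realization criteria}, which are mutually inverse because a semicircle is determined by its endpoints and its half-plane. The count $d(\pi)=2^{q-1}$ then follows from the two colorings of each bipartite component, with the component of $A_0$ pinned by its color.
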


\begin{proof}
    The construction in the proof of Theorem~\ref{thm:realization criteria} associates a realization of $\pi$ with every proper $2$-coloring of $G_\pi$ in which $A_0$ is colored $1$. Conversely, a realization determines this coloring by recording the half-plane of each semicircle. Since a semicircle is uniquely determined by its endpoints and half-plane, these constructions are inverse, proving the bijection.

    If $\pi$ is s-meandric, each of the $q$ connected components of $G_\pi$ is bipartite and has exactly two proper $2$-colorings, exchanged by reversing all its colors. The condition on $A_0$ fixes the coloring of its component, while the remaining $q-1$ components can be colored independently. Hence $d(\pi)=2^{q-1}$.
\end{proof}

\section{Component spine and cactus theorem}\label{sec:component spine}
\begin{definition}\label{def:component spine}
    Let $\pi \in S_n$ be a permutation with arcs $A_0,A_1,\dots,A_n$, and, for each $k \in \{0,1,\dots,n\}$, let $c_k$ be the connected component of $G_\pi$ containing $A_k$. The \emph{component spine} of $\pi$ is the multigraph whose vertices are the distinct components among $c_0,c_1,\dots,c_n$ and which has an edge joining $c_i$ and $c_{i+1}$ for each $i \in \{0,1,\dots,n-1\}$. Loops and parallel edges are retained.
\end{definition}

\begin{remark}
    Equivalently, the component spine is the quotient multigraph of the path $A_0 A_1\cdots A_n$ obtained by identifying two vertices if the corresponding arcs belong to the same connected component of $G_\pi$.
\end{remark}

\begin{convention}
     Throughout this paper, we denote the component spine of a permutation $\pi$ by $K_\pi$.
\end{convention}

\begin{remark}\label{rem:component spine has Euler trail}
    By definition, the edges, taken in their defining order, form an Euler trail with vertex sequence $c_0,c_1,\dots,c_n$. Therefore, the component spine is connected and has either zero or two odd-degree vertices.
\end{remark}

\begin{example}\label{ex:component spine}
    Let $\pi=(3,2,1,6,5,4,9,8,7,11,12,10)$, and let $A_0,A_1,\dots,A_{12}$ be its arcs. The interlacement graph $G_\pi$ has 5 edges and 8 connected components. Thus, the component spine $K_\pi$ has 8 vertices. In Fig.~\ref{fig:component spine}, the unique component that consists of more than one arc is denoted by $C$, and all other components are denoted by the labels of their corresponding arcs.
    
    \begin{figure}[h]
    \centering
    \resizebox{0.9\linewidth}{!}{
    \begin{tikzpicture}
        \foreach \k in {0,...,12}{
            \node[circle,draw,minimum size=7mm,inner sep=0pt]
            (A\k) at ({90-\k*360/13}:2.5) {$A_{\k}$};
        }
        \draw (A0)  to[bend right=20] (A3);
        \draw (A3)  to[bend right=15] (A6);
        \draw (A6)  to[bend right=15] (A9);
        \draw (A9)  to[bend right=20]  (A11);
        \draw (A9)  to[bend right=24]  (A12);
        
        \node at (0,-3.5) {$G_\pi$};
    \end{tikzpicture}
    \hspace{1.5cm}
    \begin{tikzpicture}[vertex/.style={circle,draw,minimum size=7mm,inner sep=0pt}]
        \node[vertex] (X) at (0,0) {$C$};
        
        \node[vertex] (A1)  at (50:2.5)   {$A_1$};
        \node[vertex] (A2)  at (-5:2.5)    {$A_2$};
        \node[vertex] (A4)  at (-45:2.5)  {$A_4$};
        \node[vertex] (A5)  at (-95:2.5)  {$A_5$};
        \node[vertex] (A7)  at (-135:2.5) {$A_7$};
        \node[vertex] (A8)  at (180:2.5)  {$A_8$};
        \node[vertex] (A10) at (145:2.5)  {$A_{10}$};
        
        \draw (X) -- (A1) -- (A2) -- (X);
        \draw (X) -- (A4) -- (A5) -- (X);
        \draw (X) -- (A7) -- (A8) -- (X);
        
        \draw (X) to[bend left=20] (A10);
        \draw (X) to[bend right=20] (A10);
        
        \draw (X) to[loop above,min distance=25mm] (X);
        \node at (0,-3.5) {$K_\pi$};
    \end{tikzpicture}
    }
    \caption{The interlacement graph and the component spine of the permutation $(3,2,1,6,5,4,9,8,7,11,12,10)$.}
    \label{fig:component spine}
    \end{figure}
\end{example}

\begin{definition}
    A connected multigraph is called a \emph{cactus} if every edge belongs to at most one simple cycle (loops and $2$-cycles are also regarded as simple cycles).
\end{definition}

\begin{theorem}\label{thm:cactus}
    Let $\pi\in S_n$. Then its component spine is a cactus.
\end{theorem}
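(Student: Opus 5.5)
The plan is to reduce the cactus property to a separation statement about the Euler trail $c_0,c_1,\dots,c_n$ of Remark~\ref{rem:component spine has Euler trail}, and then to finish by induction on the number of vertices of $K_\pi$.

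\textbf{Separation lemma.} Fix a component $C$ of $G_\pi$. Let $a<b$ be indices with $c_a=c_b=C$ and $c_k\neq C$ for $a<k<b$. Call the walk $c_a,c_{a+1},\dots,c_b$ an \emph{excursion} from $C$. The claim is that no component $D\neq C$ that occurs among $c_{a+1},\dots,c_{b-1}$ occurs anywhere outside this index range.

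I would prove this on the line in three steps.
\begin{itemize}
\item[(i)] Let $I_C$ be the interval spanned by the endpoints of the arcs of $C$. The arcs of $C$ are linked by a chain of interlacings, so their union covers $I_C$. Hence the finitely many endpoints of $C$-arcs cut $I_C$ into ``gaps''.
\item[(ii)] An arc not in $C$ interlaces no arc of $C$. I want to show it either lies inside the closure of a single gap, or contains $I_C$, or is disjoint from the interior of $I_C$. If this trichotomy fails, the arc has an endpoint strictly inside some $C$-arc and the other endpoint outside that arc, which is an interlacing. This step needs care, because a point may lie in several $C$-arcs at once.
\item[(iii)] The excursion arcs $A_{a+1},\dots,A_{b-1}$ form a chain from $\pi(a+1)$ to $\pi(b)$, and every interior shared endpoint is a point that is not a $C$-endpoint. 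Since the arcs avoid $C$-endpoints in their interiors' combinatorics, this chain is confined to one region of the trichotomy.
\end{itemize}
Any arc outside $[a,b]$ belonging to the same component $D$ would be linked to an excursion arc by a chain of interlacings. I then want to show such a chain must cross from one region to another, and hence interlace some arc of $C$, a contradiction.

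\textbf{Deducing the cactus property.} Granting the lemma, take the vertex $C=c_0$. Its visits split the trail into a possibly open initial segment, a possibly open final segment, and closed excursions. By the lemma these pieces pairwise share only the vertex $C$. Therefore $K_\pi$ is obtained by gluing, at the single vertex $C$, the subgraphs traced by the pieces, and each block of $K_\pi$ lies inside one piece. A closed excursion $c_a,\dots,c_b$ is itself the component spine, up to relabeling, of the sub-walk on the arcs $A_{a},\dots,A_{b}$. More precisely, it is the spine of the subsequence of $\pi$ restricted to these arcs, with $C$ acting as a single "outer" vertex. The open initial and final segments are handled in the same way.

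\textbf{Induction.} Within one piece, the vertex $C$ has degree $2$ (or $1$ for an open piece). Removing $C$ and applying the induction hypothesis to the remaining trail $c_{a+1},\dots,c_{b-1}$ shows that the piece is a cactus in which the two edges at $C$ lie on exactly one cycle through $C$. Hence every edge lies on at most one simple cycle. To make the induction formal, I would phrase the hypothesis for arbitrary walks that are quotients of a path under an equivalence relation satisfying the separation property. I would check that the property is inherited by sub-walks, so that the combinatorial input is used only in the lemma. Loops, where $b=a+1$, and $2$-cycles are the degenerate excursions and fit the same scheme.

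\textbf{Main obstacle.} I expect the separation lemma, specifically step (iii), to be the hardest part. The difficulty is the point-set bookkeeping of how a chain of non-$C$ arcs can thread through the gaps of $I_C$, together with the fact that $G_\pi$ need not be bipartite. So no planar drawing, and no Jordan-curve argument from Theorem~\ref{thm:realization criteria}, is available. My first attempt would be to argue purely with the gap structure of $I_C$. If that becomes messy, I would instead try an extremal argument: take an offending interlacing pair, with one arc inside and one outside the excursion, and choose it with minimal span. Then I would show that some arc of $C$ must interlace one of the two, contradicting minimality or the fact that they avoid $C$.
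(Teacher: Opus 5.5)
\textbf{Verdict: there is a genuine gap, in the separation lemma.} Your reduction is sound in outline. The separation lemma is true: it says exactly that the partition of $\{0,\dots,n\}$ given by $i\mapsto c_i$ is non-crossing, and a walk with a non-crossing vertex partition does trace a cactus. The problem is the last step of the lemma.

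Steps (i)--(iii), once made precise, show only that each maximal run of non-$C$ arcs, and each interlacement class of non-$C$ arcs, lies in a single region. Here the regions must be the individual gaps together with one \emph{outer} region that merges your types ``contains $I_C$'' and ``misses the interior of $I_C$''. A run can alternate between those two types, e.g.\ $[1,3],[1,7],[5,7]$ around $I_C=[3,5]$.

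Nothing you wrote prevents an arc $A_k$ with $k\notin[a,b]$ from lying in the \emph{same} region as the excursion. A chain of interlacings joining it to an excursion arc then never has to cross anything, so ``the chain must cross from one region to another'' is unsupported. The same objection applies to your extremal fallback: the contradiction does not come from an interlacing with a $C$-arc.

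What is missing is an exclusivity count that uses the path structure:
\begin{itemize}
\item Each $p_k$ with $1\le k\le n$ is an endpoint of exactly two arcs, and $p_0,p_{n+1}$ of one. So every $C$-endpoint carries at most one non-$C$ arc.
\item A closed excursion confined to the gap $[g,g']$ must start and end at the two distinct $C$-endpoints $g,g'$. This exhausts their non-$C$ arcs, so no other run enters that gap.
\item A closed excursion in the outer region must join $\min I_C$ to $\max I_C$. This likewise excludes every other closed excursion and the initial and final runs. Those runs contain $A_0$ or $A_n$, so they lie in the outer region and must attach at $\min I_C$ or $\max I_C$.
\end{itemize}
Hence the region of the excursion contains only excursion arcs, and the lemma follows.

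\textbf{A second, smaller gap is in the induction.} That $c_{a+1},\dots,c_{b-1}$ traces a cactus $K'$ does not by itself imply that adding $C$ with edges to $u=c_{a+1}$ and $v=c_{b-1}$ gives a cactus. For $u\neq v$ you need the $u$--$v$ path in $K'$ to consist of bridges. This follows from parity:
\begin{itemize}
\item $u$ and $v$ are then the only odd vertices of $K'$, and cycle blocks contribute even degree everywhere.
\item After contracting the cycle blocks, the bridges therefore form a path whose consecutive bridges meet every intermediate class at a single vertex.
\end{itemize}

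\textbf{Comparison with the paper.} Your remark that no planar drawing is available is mistaken, and this is where the paper's route differs. The paper draws all arcs in $H^+$ and treats their crossings as vertices, so bipartiteness plays no role. It then proceeds as follows:
\begin{itemize}
\item At each $p_k$, separate the two endpoints so that the incident arcs do not interlace.
\item Join each separated pair by a small lower semicircle.
\item Contract a spanning tree in each upper component.
\end{itemize}
This gives a plane embedding of $K_\pi$ with every edge on the unbounded face, and a plane multigraph with all edges on one face is a cactus. That single picture does the work of both your separation lemma and your induction.
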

\begin{remark}
    The theorem does not require the permutation to be s-meandric.
\end{remark}
\begin{proof}
    Consider a permutation $\pi\in S_n$ with arcs $A_0,\dots,A_n$. Draw these arcs as semicircles in $H^+$. At each of the points $p_1,\dots,p_n$, separate the two incident endpoints of semicircles into two nearby points of $\ell$ and choose their order so that the two corresponding arcs do not interlace (see an example in Fig.~\ref{fig:cactus plane proof}). The neighborhoods of the original endpoints can be chosen disjoint, so this operation preserves every other interlacement. The endpoints $p_0$ and $p_{n+1}$ are left unchanged.

    \begin{figure}[h!]
    \centering
    \begin{tikzpicture}[scale = 3.5,
        arc label/.style={font=\scriptsize, inner sep=0.5pt},
        edge label/.style={font=\scriptsize, inner sep=1pt},
        component/.style={circle, draw, thick, minimum size=5mm,
            inner sep=0pt, font=\scriptsize}
    ]
        \colorlet{cactuscolorA}{red!75!black}
        \colorlet{cactuscolorB}{blue!75!black}
        \colorlet{cactuscolorC}{teal!80!black}
        \colorlet{cactuscolorD}{violet!95!black}

        \begin{scope}
            \draw[thick, black!45] (-0.035,0) -- (1.035,0);
            \draw[ultra thick, cactuscolorA] (0,0)
                to[out = 90, in = 90, distance = 6.28319]
                node[arc label, pos=0.24, above left=0pt] {$A_0$} (0.5,0);
            \draw[ultra thick, cactuscolorA] (0.166667,0)
                to[out = 90, in = 90, distance = 8.37758]
                node[arc label, midway, above=1pt] {$A_3$} (0.833333,0);
            \draw[ultra thick, cactuscolorA] (0.333333,0)
                to[out = 90, in = 90, distance = 8.37758]
                node[arc label, midway, above=1pt] {$A_5$} (1,0);
            \draw[ultra thick, cactuscolorB] (0.5,0)
                to[out = 90, in = 90, distance = 2.09440]
                node[arc label, midway, above=1pt] {$A_1$} (0.666667,0);
            \draw[ultra thick, cactuscolorC] (0.666667,0)
                to[out = 90, in = 90, distance = 2.09440]
                node[arc label, midway, above=1pt] {$A_2$} (0.833333,0);
            \draw[ultra thick, cactuscolorD] (0.166667,0)
                to[out = 90, in = 90, distance = 2.09440]
                node[arc label, midway, above=1pt] {$\ A_4$} (0.333333,0);
            \foreach \i in {0,...,6}{
                \fill ({\i/6},0) circle[radius=0.007];
                \node[arc label, below=2pt] at ({\i/6},0) {$\i$};
            }
            \node[font=\small] at (0.5,-0.25) {(a) Arcs in $H^+$};
        \end{scope}

        \draw[thick, ->] (1.075,0.13) --
            node[font=\scriptsize, align=center, above=2pt] {split and add\\arcs in $H_-$} (1.285,0.13);

        \begin{scope}[xshift=1.4cm]
            \draw[thick, black!45] (-0.035,0) -- (1.235,0);
            \draw[ultra thick, cactuscolorA] (0,0)
                to[out = 90, in = 90, distance = 7.03717] (0.56,0);
            \draw[ultra thick, cactuscolorA] (0.16,0)
                to[out = 90, in = 90, distance = 11.05841] (1.04,0);
            \draw[ultra thick, cactuscolorA] (0.44,0)
                to[out = 90, in = 90, distance = 9.55044] (1.2,0);
            \draw[ultra thick, cactuscolorB] (0.64,0)
                to[out = 90, in = 90, distance = 1.50796] (0.76,0);
            \draw[ultra thick, cactuscolorC] (0.84,0)
                to[out = 90, in = 90, distance = 1.50796] (0.96,0);
            \draw[ultra thick, cactuscolorD] (0.24,0)
                to[out = 90, in = 90, distance = 1.50796] (0.36,0);
            \foreach \x/\i in {0.2/3,0.4/4,0.6/0,0.8/1,1/2}{
                \draw[ultra thick] (\x-0.04,0)
                    to[out = -90, in = -90, distance = 2.2]
                    node[edge label, midway, below=2pt] {$e_{\i}$} (\x+0.04,0);
                \fill (\x-0.04,0) circle[radius=0.007];
                \fill (\x+0.04,0) circle[radius=0.007];
            }
            \fill (0,0) circle[radius=0.007];
            \fill (1.2,0) circle[radius=0.007];
            \node[arc label, below=2pt] at (0,0) {$0$};
            \node[arc label, below=2pt] at (1.2,0) {$6$};
            \node[font=\small] at (0.6,-0.25) {(b) Separate endpoints};
        \end{scope}

        \draw[thick, ->] (2.68,0.13) --
            node[font=\scriptsize, align=center, above=2pt]
            {contract\\and delete} (2.89,0.13);

        \begin{scope}[xshift=3.1cm]
            \node[component, draw=cactuscolorD, fill=cactuscolorD!15]
                (C3) at (0,0.13) {$C_3$};
            \node[component, draw=cactuscolorA, fill=cactuscolorA!15]
                (C0) at (0.4,0.13) {$C_0$};
            \node[component, draw=cactuscolorB, fill=cactuscolorB!15]
                (C1) at (0.84,0.38) {$C_1$};
            \node[component, draw=cactuscolorC, fill=cactuscolorC!15]
                (C2) at (0.84,-0.12) {$C_2$};
            \draw[thick] (C0) -- node[edge label, above=1pt] {$e_0$} (C1);
            \draw[thick] (C1) -- node[edge label, right=2pt] {$e_1$} (C2);
            \draw[thick] (C2) -- node[edge label, below=1pt] {$e_2$} (C0);
            \draw[thick] (C3) to[bend left=42]
                node[edge label, above=1pt] {$e_3$} (C0);
            \draw[thick] (C3) to[bend right=42]
                node[edge label, below=1pt] {$e_4$} (C0);
            \node[font=\small] at (0.42,-0.25) {(c) The cactus $K_\pi$};
        \end{scope}
    \end{tikzpicture}
    \caption{Illustration of the proof of Theorem~\ref{thm:cactus} for $\pi=(3,4,5,1,2)$.}
    \label{fig:cactus plane proof}
\end{figure}

    Consider the union of the resulting semicircles as a plane graph whose vertices are the endpoints and intersection points of the semicircles. The connected components of this graph correspond precisely to the connected components of $G_\pi$. Join each pair of separated endpoints by a small semicircle in $H^-$. These $n$ new edges are pairwise disjoint, and the lower side of every one is incident to the unbounded face (see Fig.~\ref{fig:cactus plane proof}(b)).

    Contract a spanning tree in each upper component and delete all its remaining edges, retaining every lower edge, including those which become loops. The resulting plane multigraph is exactly $K_\pi$: its vertices are the components of $G_\pi$, and its edges join the components containing consecutive arcs. Each retained edge is still incident to the common unbounded face. Indeed, edge contraction shortens face boundaries, while edge deletion merges faces, so these operations preserve the indicated incidence.

    A connected plane multigraph whose edges are all incident to one face is a cactus. Otherwise, two simple cycles sharing an edge would yield three internally disjoint paths with the same two endpoints. Delete all other edges. Every remaining edge is still incident to a common face, but a plane graph consisting of three such paths has three faces, each bounded by only two of the paths. This is a contradiction. The argument allows paths consisting of single edges, and hence includes parallel edges, while a loop cannot belong to two distinct simple cycles. Since $K_\pi$ is connected, the theorem follows.
\end{proof}

\begin{definition}\label{def:cut space}
    Let $K$ be a finite multigraph, and let $V(K)$ and $E(K)$ denote its sets of vertices and edges, respectively. Its coboundary map
    $$
    \partial_K\colon\F_2^{V(K)}\longrightarrow\F_2^{E(K)}
    $$
    is defined by $(\partial_Kx)_e=x_u+x_v$ for every edge $e$ with endpoints $u$ and $v$.
    The \emph{cut space} of $K$ is
    $$
    \operatorname{Cut}(K):=\operatorname{im}\partial_K\subseteq\F_2^{E(K)}.
    $$
\end{definition}

\begin{theorem}\label{thm:cut space}
    The realizations of an s-meandric permutation $\pi$ naturally form an affine space over $\mathbb{F}_2$ with translation space $\operatorname{Cut}(K_\pi)$. Choosing a base realization gives an affine bijection with $\operatorname{Cut}(K_\pi)$.
\end{theorem}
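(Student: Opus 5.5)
We encode each realization by the data that actually distinguishes realizations, namely the pattern of touches along the path $A_0A_1\cdots A_n$. By Theorem~\ref{thm:degree of realizability}, a realization $M$ of $\pi$ is the same thing as a proper $2$-coloring $\chi_M$ of $G_\pi$ with $\chi_M(A_0)=1$. Since $\chi_M$ is constant or anti-constant on each component, we write $\chi_M=\chi_0+y$ for a fixed base coloring $\chi_0$, where $y\in\F_2^{V(K_\pi)}$ is a vector indexed by the components of $G_\pi$ with $y_{c_0}=0$. To $M$ we then assign the vector $\tau(M)\in\F_2^{E(K_\pi)}$ whose coordinate at the edge $e_i$ (joining $c_i$ and $c_{i+1}$, for $0\le i\le n-1$) is $\chi_M(A_i)+\chi_M(A_{i+1})+1$. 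This coordinate equals $1$ exactly when $p_{\pi(i+1)}$ is a touch, because $A_i$ and $A_{i+1}$ are the two semicircles meeting at $p_{\pi(i+1)}$. Hence the touch vector is canonically attached to $M$, and it will carry the affine structure.

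The key computation is that for two realizations $M,M'$ with $\chi_{M'}=\chi_M+y$ we have $\tau(M')-\tau(M)=\partial_{K_\pi}y$, where $y$ is regarded as a function on components. This holds coordinatewise: $(\tau(M')-\tau(M))_{e_i}=y_{c_i}+y_{c_{i+1}}$. So all touch vectors lie in a single coset $\tau(M_0)+\operatorname{Cut}(K_\pi)$. Conversely, for any $z\in\operatorname{Cut}(K_\pi)$, write $z=\partial y$. Since $\ker\partial_{K_\pi}=\{0,\mathbf 1\}$ (because $K_\pi$ is connected, by Remark~\ref{rem:component spine has Euler trail}), we may normalize $y$ so that $y_{c_0}=0$. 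Then $\chi_{M_0}+y$ is again a proper coloring with $A_0$ colored $1$: flipping whole components preserves properness. So $z$ is realized by some $M$.

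Next we check injectivity, which again comes from the kernel being $\{0,\mathbf 1\}$. If $\tau(M)=\tau(M')$, then $\partial y=0$, so $y$ is constant; together with $y_{c_0}=0$ this forces $y=0$ and hence $M=M'$. Therefore $M\mapsto\tau(M)$ is a bijection from realizations onto the coset $\tau(M_0)+\operatorname{Cut}(K_\pi)$. The translation action $M+z$, defined as the realization with touch vector $\tau(M)+z$, is then free and transitive, which gives the affine structure. Choosing $M_0$ yields the affine bijection $M\mapsto\tau(M)-\tau(M_0)$. As a consistency check, $|\operatorname{Cut}(K_\pi)|=2^{|V|-1}=2^{q-1}=d(\pi)$, in agreement with Theorem~\ref{thm:degree of realizability}.

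The only step requiring care is the naturality claim: the structure must not depend on the base point or on the choice of $\chi_0$. Defining the action through the intrinsic touch vector, rather than through colorings, handles this. The remaining subtlety is the identification of the edge $e_i$ with the intersection point $p_{\pi(i+1)}$, including loops. A loop corresponds to consecutive arcs in the same component, so its cut coordinate is always $0$; that is, its touch status is the same in every realization. This is consistent with the formula above and needs no separate treatment.
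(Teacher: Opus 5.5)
Your proof is correct and is essentially the paper's argument. The same two facts do all the work: flipping components by $y$ changes the touch pattern by $\partial_{K_\pi}y$, and $\ker\partial_{K_\pi}$ consists of the constant vectors because $K_\pi$ is connected.

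The only difference is packaging. You embed the realizations directly as the coset $\tau(M_0)+\operatorname{Cut}(K_\pi)$ of touch vectors, so naturality comes for free. The paper instead maps realizations to $\operatorname{Cut}(K_\pi)$ via $\partial x$ relative to a base point, checks that changing the base point acts by a translation, and derives the touch-vector coset only afterward.
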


\begin{proof}
    Let $\pi \in S_n$ be an s-meandric permutation. Choose a singular meander $M$ realizing $\pi$ as a base realization.
    Under the correspondence used in the proof of Theorem~\ref{thm:degree of realizability}, $M$ corresponds to a proper $2$-coloring
    $
    \varepsilon=(\varepsilon_0,\varepsilon_1,\dots,\varepsilon_n)\in\F_2^{n+1}
    $
    of the interlacement graph $G_\pi$ with $\varepsilon_0=1$.

    Every proper $2$-coloring $\varepsilon'$ with $\varepsilon'_0=1$ is obtained by flipping the colors on some subset of the connected components of $G_\pi$. Thus, there is a vector $x \in \F_2^{V(K_\pi)}$ with $x_{c_0}=0$ such that
    $$
    \varepsilon'_i=\varepsilon_i+x_{c_i}, \qquad i=0,\dots,n,
    $$
    where $c_i$ is the connected component of $G_\pi$ containing $A_i$.

    For each $i=0,\dots,n-1$, let $e_i$ be the edge of $K_\pi$ corresponding to the consecutive arcs $A_i,A_{i+1}$. Then
    $$
    (\partial_{K_\pi}x)_{e_i}=x_{c_i}+x_{c_{i+1}}.
    $$
    Hence the realization corresponding to $\varepsilon'$ determines the element $\partial_{K_\pi}x\in\operatorname{Cut}(K_\pi)$.

    Since $K_\pi$ is connected, $\ker\partial_{K_\pi}$ consists precisely of the constant vectors. Therefore, every element of $\operatorname{Cut}(K_\pi)$ has a unique preimage $x$ satisfying $x_{c_0}=0$. Thus, the above correspondence is a bijection between the realizations of $\pi$ and $\operatorname{Cut}(K_\pi)$.

    Changing the base realization by a component-flip vector $x_0$ replaces the coordinate $\partial_{K_\pi}x$ by
    $$
    \partial_{K_\pi}(x+x_0) = \partial_{K_\pi}x+\partial_{K_\pi}x_0.
    $$
    Thus the bijections obtained from different base realizations differ by translations and define the same affine-space structure.
\end{proof}

\begin{remark}
    The bijection is not canonical. However, for a meandric permutation, we may take the unique non-singular realization of $\pi$ as the base realization, which makes the bijection canonical.
\end{remark}

\subsection{Signed component spine}
Let $\mathcal{R}(\pi)$ be the set of singular meanders realizing an s-meandric permutation $\pi\in S_n$.
For $i=0,1,\dots,n$, denote by $c_i$ the connected component of the interlacement graph containing the arc $A_i$, as in Definition~\ref{def:component spine}. For $i=0,1,\dots,n-1$, let $e_i$ be the edge of $K_\pi$ corresponding to the consecutive arcs $A_i,A_{i+1}$. It joins $c_i$ and $c_{i+1}$ and corresponds to the intersection $p_{\pi(i+1)}$.
For $M\in\mathcal{R}(\pi)$, define its \emph{tangency vector} $\tau_M\in\F_2^{E(K_\pi)}$ by
$$
(\tau_M)_e=
\begin{cases}
1, & \text{if the intersection corresponding to the edge $e$ is a touch in $M$},\\
0, & \text{otherwise}.
\end{cases}
$$
Thus, $\operatorname{t}(M)$ is the Hamming weight of $\tau_M$.

Fix a base realization $M_0\in\mathcal{R}(\pi)$.
Under the bijection constructed in the proof of Theorem~\ref{thm:cut space}, flipping the colors on the connected components according to a vector $\varphi\in\F_2^{V(K_\pi)}$ with $\varphi_{c_0}=0$ changes the tangency coordinate corresponding to $e_i$ by
$$
    \varphi_{c_i}+\varphi_{c_{i+1}} = (\partial_{K_\pi}\varphi)_{e_i}.
$$
Since adding a constant vector does not change the coboundary, every vector has the same coboundary as one satisfying $\varphi_{c_0}=0$. Hence the set of tangency vectors of all realizations of $\pi$ is
$$
    \tau_{M_0}+\operatorname{Cut}(K_\pi)
    =
    \left\{
    \tau_{M_0}+\partial_{K_\pi}\varphi
    \mathrel{}\middle|\mathrel{}
    \varphi\in\F_2^{V(K_\pi)}
    \right\}.
$$

For a realization $M\in\mathcal{R}(\pi)$, define a \emph{signature} $J_M\colon E(K_\pi)\longrightarrow\{-1,1\}$ by
$$
    J_M(e)=(-1)^{(\tau_M)_e}.
$$
Thus, an edge is negative precisely if its corresponding intersection is a touch. If $M'$ is another realization, then $\tau_M$ and $\tau_{M'}$ belong to the same affine coset of $\operatorname{Cut}(K_\pi)$. Hence there exists $\varphi\in\F_2^{V(K_\pi)}$ such that
$$
    \tau_{M'}=\tau_M+\partial_{K_\pi}\varphi.
$$
Define a \emph{switching function} $s\colon V(K_\pi) \to \{-1,1\}$ by
$$
    s(v):=(-1)^{\varphi_v}.
$$
We obtain
$$
    J_{M'}(e)=s(u)J_M(e)s(v)
$$
for every edge $e$ in $K_\pi$ with endpoints $u$ and $v$. Therefore, all pairs $(K_\pi,J_M)$ are switching equivalent.

\begin{definition}\label{def:signed component spine}
    For an s-meandric permutation $\pi$, the common switching class of the pairs $(K_\pi,J_M)$ for $M \in \mathcal{R}(\pi)$, denoted by $\Sigma_\pi$, is called the \emph{signed component spine} of $\pi$.
\end{definition}

For a cycle $C$ of $K_\pi$, define its sign in $\Sigma_\pi$ by
$$
\prod_{e\in E(C)}J(e),
$$
where $(K_\pi,J)$ is any representative of $\Sigma_\pi$. This sign is independent of the representative. A cycle is called \emph{positive} or \emph{negative} according to its sign, and the signed component spine is called \emph{balanced} if every cycle is positive.

\begin{remark}
    The notions of switching and balance used here are standard in signed-graph theory; see \cite[Sections~2--3]{Zaslavsky82SignedGraphs}. For a fixed underlying graph, encoding positive edges by $0$ and negative edges by $1$ identifies switching classes with affine cosets of the binary cut space; see Sol\'e and Zaslavsky~\cite[Lemma~1]{SoleZaslavsky94CodingApproachSignedGraphs}. In our setting, the coset
    $$
        \tau_{M_0}+\operatorname{Cut}(K_\pi) = \{\tau_M \mid M\in\mathcal{R}(\pi)\}
    $$
    is therefore the binary encoding of the signed component spine $\Sigma_\pi$. Although \cite{SoleZaslavsky94CodingApproachSignedGraphs} formulates this correspondence for loopless graphs, it extends directly to the multigraphs considered here: switching leaves loop signs unchanged, and every cut vector has zero coordinates on loops.
\end{remark}

\section{Tangency polynomial}\label{sec:tangency polynomial}
\begin{definition}
    Let $\mathcal{R}(\pi)$ denote the set of singular meanders realizing a permutation $\pi$. The \emph{tangency polynomial} of $\pi$ is
    $$
    T_\pi(x)=\sum_{M \in \mathcal{R}(\pi)}x^{\operatorname{t}(M)}.
    $$
\end{definition}

\begin{convention}
    Throughout this paper, we denote the tangency polynomial of a permutation $\pi$ by $T_\pi(x)$.
\end{convention}

For an s-meandric permutation, the representatives of $\Sigma_\pi$ are in bijection with its realizations, and hence
$$
T_\pi(x)=\sum_{(K_\pi,J)\in\Sigma_\pi}x^{|E^-(J)|},
$$
where $E^-(J)=\{e\in E(K_\pi)\mid J(e)=-1\}$. Thus, $T_\pi(x)$ is the negative-edge weight enumerator of the signed component spine.

Many properties of the realizations of a permutation $\pi$ can be read directly from its tangency polynomial:
\begin{enumerate}
    \item $\pi$ is s-meandric if and only if $T_\pi(x)\not\equiv 0$;
    \item $\pi$ is meandric if and only if $T_\pi(0)=1$;
    \item the degree of realizability $d(\pi)$ is $T_\pi(1)$.
\end{enumerate}
For an s-meandric permutation $\pi$, the following also hold:
\begin{enumerate}
    \setcounter{enumi}{3}
    \item the \emph{minimum tangency number} $t_-(\pi)$ is the minimum number of touches among the realizations of $\pi$; it is equal to $\min \operatorname{supp} T_\pi$, where $\operatorname{supp} T_\pi$ denotes the set of exponents with nonzero coefficient in $T_\pi$;
    \item the \emph{maximum tangency number} $t_+(\pi)$ is the maximum number of touches among the realizations of $\pi$; it is equal to $\deg T_\pi$;
    \item the mean number of touches in a uniformly random realization is equal to $T_\pi'(1)/T_\pi(1)$.
\end{enumerate}

\begin{example}
Some tangency polynomials for permutations in $S_6$ are given below.
$$
{
\renewcommand{\arraystretch}{1.1}
\begin{array}{c|c}
\pi & T_\pi(x) \\
\hline
(2,4,1,3,5,6) & 0 \\
(1,2,3,4,5,6) & (1+x)^6 \\
(3,6,4,2,5,1) & x^5 \\
(3,2,1,6,5,4) & 1+6x^2+9x^4 \\
(4,5,6,2,3,1) & 2x+6x^3 \\
(6,3,4,5,1,2) & 2x+6x^3 \\
\end{array}
}
$$
\end{example}

\subsection{Block factorization theorem}\label{sec:block factorization}
Let $\pi\in S_n$ be an s-meandric permutation, fix a base realization $M_0\in\mathcal{R}(\pi)$, and let $J=J_{M_0}$ be the corresponding signature. Then $(K_\pi,J)$ represents the signed component spine $\Sigma_\pi$.
Let $B$ be a cycle block of $K_\pi$ (possibly a loop), and let $E(B)$ be the set of its edges. Define
$$
\lambda_{B}:=|E(B)|
\qquad\text{and}\qquad
\eta_{B}:=\sum_{e\in E(B)}(\tau_{M_0})_e\in\F_2,
$$
where we identify $\F_2$ with $\{0,1\}$. Note that $\eta_{B}$ is independent of the choice of the base realization $M_0$, since the restriction of every cut to a cycle has even weight. We call $\eta_B$ the \emph{frustration parity} of $B$. The sign of $B$, given by
$$
\prod_{e\in E(B)}J(e)=(-1)^{\eta_B},
$$
is invariant under switching. Thus, $B$ is positive if $\eta_B=0$ and negative if $\eta_B=1$. A negative loop is called a \emph{frustrated loop}. The intersection corresponding to a frustrated loop is a touch in every realization, whereas the intersection corresponding to a positive loop is a transverse crossing in every realization.

The restrictions of the tangency vectors of all realizations to $E(B)$ are precisely the vectors $z\in\F_2^{E(B)}$ satisfying
$$
\sum_{e\in E(B)}z_e=\eta_{B}.
$$
Indeed, the restriction of $\operatorname{Cut}(K_\pi)$ to $E(B)$ is the cut space of $B$, which consists precisely of the vectors of even weight. Therefore, a cycle block of length $\lambda$ and frustration parity $\eta$ contributes
$$
E_{\lambda,\eta}(x)
:=
\sum_{\substack{0\leq j\leq\lambda\\
                 j\equiv\eta\;(\operatorname{mod}2)}}
\binom{\lambda}{j}x^j
=
\frac{(1+x)^\lambda+(-1)^\eta(1-x)^\lambda}{2}
$$
to the tangency polynomial of $\pi$. In particular, $E_{1,0}(x)=1$ and $E_{1,1}(x)=x$.

On a bridge, the tangency coordinate can take either value. By Theorem~\ref{thm:cactus}, the blocks of $K_\pi$ form a tree-like arrangement. Local component-flip vectors on the blocks can therefore be made to agree at shared vertices by successively adding constant vectors, which does not change their coboundaries. Thus, the choices on distinct blocks are independent. Each bridge contributes a factor $1+x$, and each cycle block contributes the corresponding factor $E_{\lambda_B,\eta_B}(x)$. This gives the following result.

\begin{theorem}\label{thm:block factorization}
    Let $\pi\in S_n$ be an s-meandric permutation. Suppose that $K_\pi$ has $b$ bridges. Then the tangency polynomial of $\pi$ factors as
    \begin{equation}\label{eq:block factorization}
        T_\pi(x)
        =
        (1+x)^b
        \prod_{B} E_{\lambda_{B},\eta_{B}}(x),
    \end{equation}
    where the product is taken over all cycle blocks (including loops) $B$ of $K_\pi$.
\end{theorem}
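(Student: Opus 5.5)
The plan is to show that the set of tangency vectors $\tau_{M_0}+\operatorname{Cut}(K_\pi)$ decomposes as a direct product over the blocks of $K_\pi$. The weight enumerator then factors, since Hamming weight is additive over a partition of coordinates. By the discussion in Section~\ref{sec:component spine}, $T_\pi(x)$ is the weight enumerator of the coset $\tau_{M_0}+\operatorname{Cut}(K_\pi)\subseteq\F_2^{E(K_\pi)}$. The edge set $E(K_\pi)$ is partitioned by the blocks of $K_\pi$. By Theorem~\ref{thm:cactus}, each block is either a bridge or a single cycle, where loops and $2$-cycles count as cycles. So it suffices to prove
$$
\operatorname{Cut}(K_\pi)=\bigoplus_{B}\operatorname{Cut}(B),
$$
where $\operatorname{Cut}(B)\subseteq\F_2^{E(B)}$ is the cut space of the block regarded as a graph on its own.

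The inclusion $\subseteq$ is immediate: the restriction of $\partial_{K_\pi}\varphi$ to $E(B)$ equals $\partial_B(\varphi|_{V(B)})$. For the reverse inclusion, I am given vectors $\varphi_B\in\F_2^{V(B)}$, one for each block. I must produce a single $\varphi\in\F_2^{V(K_\pi)}$ whose restriction to each $V(B)$ differs from $\varphi_B$ by a constant. For this I use the block--cutvertex tree of the connected graph $K_\pi$. Root it at a block $B_0$ and set $\varphi=\varphi_{B_0}$ on $V(B_0)$. Then proceed outward through the tree. When reaching a block $B$ attached at a cut vertex $v$ whose value is already fixed, replace $\varphi_B$ by $\varphi_B+\text{const}$ so that it agrees at $v$. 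Adding a constant does not change the coboundary. Because the block--cutvertex structure is a tree, each block meets the previously treated part only at its single attaching cut vertex. Hence no conflicts arise, and $\partial_{K_\pi}\varphi$ restricts to $\partial_B\varphi_B$ on every block. Making this tree argument precise is the main, though modest, obstacle: I must invoke that blocks pairwise share at most one vertex and that the block--cutvertex graph is acyclic. I will also handle loops, which are blocks of their own with trivial cut space.

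Next I compute the local cut spaces. For a bridge, $\operatorname{Cut}$ is all of $\F_2$. For a cycle block $B$ of length $\lambda_B\ge 2$, $\operatorname{Cut}(B)$ is the even-weight subspace. It has dimension $|V(B)|-1=\lambda_B-1$ and lies inside the even-weight space because every vertex meets the cycle in an even number of edge-ends. For a loop, $\operatorname{Cut}$ is $\{0\}$, which is again the even-weight subspace of $\F_2^1$. Therefore the coset $\tau_{M_0}+\operatorname{Cut}(K_\pi)$ is the product of the following local sets. On each bridge the local set is all of $\F_2$. On each cycle block $B$ it is $\{z\in\F_2^{E(B)}: \sum z_e=\eta_B\}$, where $\eta_B$ is the parity of $\tau_{M_0}$ on $E(B)$.

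Finally, the weight enumerator of a product set is the product of the enumerators of the factors. The enumerator of $\F_2$ is $1+x$, and the enumerator of the parity-$\eta$ vectors of length $\lambda$ is $E_{\lambda,\eta}(x)$. Substituting these gives \eqref{eq:block factorization}.
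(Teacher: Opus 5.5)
Your proof is correct and follows essentially the same route as the paper's argument preceding the theorem. Both identify the tangency vectors with the coset $\tau_{M_0}+\operatorname{Cut}(K_\pi)$ and use the cactus theorem to make every block a bridge or a cycle. Both glue local flip vectors along the tree-like block structure by adding constants, and both use that the cut space of a cycle block is its even-weight subspace. You spell out the direct-sum decomposition $\operatorname{Cut}(K_\pi)=\bigoplus_B\operatorname{Cut}(B)$ and the block--cutvertex tree gluing more explicitly than the paper does.
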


\begin{example}
    Consider the permutation $\pi=(3,2,1,6,5,4,9,8,7,11,12,10)$ from Example~\ref{ex:component spine}. Take the singular meander shown in Fig.~\ref{fig:meander example for block factorization} as a base realization of $\pi$. The component spine has no bridges; its three $3$-cycle blocks are positive, its $2$-cycle block is negative, and its single loop is negative. Thus
    $$
    T_\pi(x) = E_{1,1}(x)E_{2,1}(x)E_{3,0}(x)^3 = x(2x)(1+3x^2)^3 = 2x^2+18x^4+54x^6+54x^8.
    $$
    \begin{figure}[ht]
        \centering
        \begin{tikzpicture}[scale = 10]
        \draw[thick] (-0.05, 0) to (1.05, 0);
        \draw[ultra thick] (0,0)
            to[out = 90, in = 90, distance = 2.89993] (0.230769, 0)
            to[out = 90, in = 90, distance = 0.966644] (0.153846, 0)
            to[out = 90, in = 90, distance = 0.966644] (0.0769231, 0)
            to[out = -90, in = -90, distance = 4.83322] (0.461538, 0)
            to[out = -90, in = -90, distance = 0.966644] (0.384615, 0)
            to[out = -90, in = -90, distance = 0.966644] (0.307692, 0)
            to[out = 90, in = 90, distance = 4.83322] (0.692308, 0)
            to[out = -90, in = -90, distance = 0.966644] (0.615385, 0)
            to[out = 90, in = 90, distance = 0.966644] (0.538462, 0)
            to[out = -90, in = -90, distance = 3.86658] (0.846154, 0)
            to[out = -90, in = -90, distance = 0.966644] (0.923077, 0)
            to[out = 90, in = 90, distance = 1.93329] (0.769231, 0)
            to[out = 90, in = 90, distance = 2.89993] (1, 0);
        \end{tikzpicture}
        \caption{A singular meander realizing the permutation $(3,2,1,6,5,4,9,8,7,11,12,10)$.}
        \label{fig:meander example for block factorization}
    \end{figure}
\end{example}

\begin{corollary} \label{cor:meandric permutation balanced spine}
    An s-meandric permutation is meandric if and only if its signed component spine is balanced.
\end{corollary}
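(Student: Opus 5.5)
The plan is to read the corollary off the block factorization, Theorem~\ref{thm:block factorization}, using the observation from the list of properties that $\pi$ is meandric if and only if $T_\pi(0)=1$. Since $\pi$ is s-meandric, \eqref{eq:block factorization} gives
$$
T_\pi(0)=\prod_B E_{\lambda_B,\eta_B}(0),
$$
because the bridge factor $(1+x)^b$ equals $1$ at $x=0$. From the defining sum, $E_{\lambda,\eta}(0)$ is the coefficient of $x^0$, which is $\binom{\lambda}{0}=1$ if $\eta=0$ and $0$ if $\eta=1$. Hence $T_\pi(0)=1$ exactly when every cycle block $B$ (loops included) has $\eta_B=0$, that is, when every cycle block is positive. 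In every other case $T_\pi(0)=0$.

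It remains to identify ``every cycle block is positive'' with ``$\Sigma_\pi$ is balanced'', meaning that every cycle of $K_\pi$ is positive. By Theorem~\ref{thm:cactus}, $K_\pi$ is a cactus, so each simple cycle of $K_\pi$ is itself a cycle block, and conversely each cycle block is a simple cycle. If balance is read as referring to all closed walks, or to all elements of the cycle space, I will add one short step. The sign $\prod_{e}J(e)$ is multiplicative over the $\mathbb F_2$-sum of edge sets, since it equals $(-1)^{\langle\tau,z\rangle}$ for the edge indicator vector $z$. The cycle space of a cactus is spanned by its block cycles. Therefore positivity of all blocks implies positivity of all cycles, and the converse is immediate.

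As an alternative direct check that avoids polynomials, suppose $\Sigma_\pi$ is balanced. Then $\tau_{M_0}$ lies in the orthogonal complement of the cycle space, which is $\operatorname{Cut}(K_\pi)$. Hence $0\in\tau_{M_0}+\operatorname{Cut}(K_\pi)$, so some realization has no touches. The converse holds because the zero tangency vector makes every cycle positive. I do not expect any real obstacle. The only point that needs care is making sure loops and $2$-cycles are treated correctly as blocks, and the cactus theorem together with the convention $E_{1,\eta}$ handles this.
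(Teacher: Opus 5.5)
Your main argument is the paper's proof: evaluate the block factorization at $x=0$, where the bridge factors give $1$ and each $E_{\lambda_B,\eta_B}(0)$ equals $1$ or $0$ according as $\eta_B=0$ or $1$. You then use Theorem~\ref{thm:cactus}, which the paper leaves implicit, to identify positivity of all cycle blocks with balance. Your alternative argument, via the fact that the orthogonal complement of the cycle space is $\operatorname{Cut}(K_\pi)$, is also correct, and it has the small advantage of needing neither the cactus theorem nor the block factorization.
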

\begin{proof}
    By~\eqref{eq:block factorization}, $T_\pi(0)=1$ if and only if $\eta_{B}=0$ for every cycle block $B$.
\end{proof}

\begin{corollary}
    In the notation of Theorem~\ref{thm:block factorization}, the minimum tangency number of $\pi$ is equal to
    \begin{equation}\label{eq:minimum tangency number}
         t_-(\pi) =
        \left|\left\{B\mid B \text{ is a cycle block of } K_\pi
        \text{ and } \eta_{B}=1\right\}\right|.
    \end{equation}
    The number of realizations attaining the minimum is
    $$
    \left[x^{t_-(\pi)}\right]T_\pi(x) = \prod_{B} \lambda_{B},
    $$
    where the product runs over all cycle blocks $B$ of $K_\pi$ with $\eta_{B}=1$.
\end{corollary}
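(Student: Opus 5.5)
The plan is to read both claims directly off the block factorization~\eqref{eq:block factorization} by determining the lowest-order term of each factor. Since every factor has nonnegative integer coefficients, the lowest nonzero term of the product is the product of the lowest nonzero terms of the factors. No cancellation can occur. So $t_-(\pi)$ is the sum of the minimal exponents of the factors, and $[x^{t_-(\pi)}]T_\pi$ is the product of the corresponding lowest coefficients.

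Next I will compute these lowest terms one factor at a time. A bridge contributes $1+x$, with lowest term $1\cdot x^0$. For a cycle block $B$ with $\eta_B=0$, the factor $E_{\lambda_B,0}(x)=\sum_{j \text{ even}}\binom{\lambda_B}{j}x^j$ has lowest term $\binom{\lambda_B}{0}x^0=1$. For a cycle block with $\eta_B=1$, the sum runs over odd $j$ and starts at $j=1$, since $\lambda_B\ge 1$. Its lowest term is therefore $\binom{\lambda_B}{1}x=\lambda_B x$. This is consistent with $E_{1,1}(x)=x$ for frustrated loops.

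Combining the two steps, the minimal exponent equals the number of cycle blocks with $\eta_B=1$, which is~\eqref{eq:minimum tangency number}. The coefficient at that exponent is $\prod_{B:\,\eta_B=1}\lambda_B$.

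This also matches the combinatorial picture. On each negative block one must place exactly one touch, which can be done in $\lambda_B$ ways. Everywhere else, by the independence of blocks established before Theorem~\ref{thm:block factorization}, all intersections can be made transverse. There is no real obstacle here. The only point needing care is the nonnegativity argument that rules out cancellation, together with the remark that $\lambda_B\ge1$, so the odd-part sum is nonempty.
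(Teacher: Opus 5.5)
Your proposal is correct and follows essentially the same route as the paper: both read off the lowest-order term of each factor in the block factorization, namely constant term $1$ for bridges and for cycle blocks with $\eta_B=0$, and lowest term $\lambda_B x$ for cycle blocks with $\eta_B=1$. Incidentally, the nonnegativity of the coefficients is not needed to rule out cancellation, since the lowest-order coefficients of a product of nonzero integer polynomials simply multiply.
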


\begin{proof}
    A cycle block $B$ contributes a minimum of $0$ touches if $\eta_{B}=0$ and a minimum of $1$ touch if $\eta_{B}=1$, while each bridge contributes a minimum of $0$ touches; this proves~\eqref{eq:minimum tangency number}. Moreover, for $\eta_{B}=1$, the coefficient of $x$ in $E_{\lambda_{B},1}(x)$ is $\lambda_{B}$, whereas all other factors have constant term $1$.
\end{proof}

\begin{corollary}\label{cor:inequality}
    For every s-meandric permutation $\pi\in S_n$,
    $$
    t_-(\pi)+\log_2 d(\pi)\leq n.
    $$
    Equality holds if and only if every cycle block of the signed component spine is negative.
\end{corollary}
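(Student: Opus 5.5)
We need $t_- + \log_2 d \le n$. By Theorem~\ref{thm:degree of realizability}, $\log_2 d(\pi) = q-1$, where $q=|V(K_\pi)|$. By \eqref{eq:minimum tangency number}, $t_-(\pi)$ is the number of negative cycle blocks. The key input is the cycle rank of $K_\pi$. The spine is connected with $q$ vertices and $n$ edges, so its cyclomatic number is $n-q+1$. By Theorem~\ref{thm:cactus}, $K_\pi$ is a cactus. Hence its cycle blocks are edge-disjoint simple cycles (loops and 2-cycles included), and their number equals the cyclomatic number $n-q+1$.

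To justify the count, I will argue blockwise. Every edge lies in exactly one block, and each block is either a bridge or a single cycle, since in a cactus a 2-connected block with two distinct simple cycles would force an edge to lie in two cycles. The cyclomatic number is additive over blocks: $\sum_B(|E(B)|-|V(B)|+1)$ is computed through the block-cut tree. A bridge contributes $0$ and a cycle block contributes $1$. So the number $c$ of cycle blocks equals $n-q+1$.

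Then
$$t_-(\pi)+\log_2 d(\pi) = \#\{B:\eta_B=1\} + q-1 \le c+q-1 = n,$$
with equality exactly when every cycle block has $\eta_B=1$, that is, every cycle block is negative.

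The only step needing care is the identity $c=n-q+1$ for multigraph cacti with loops. I would verify it by induction on the number of edges. Deleting a bridge splits the cactus into two cacti, and the formula adds up, since the edges drop by one and the components rise by one. Deleting one edge of a cycle block removes exactly one cycle block, while its remaining edges become bridges, and the vertex count is unchanged. A loop is a special case of the same step: removing it lowers both $n$ and $c$ by one.
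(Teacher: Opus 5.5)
Your proof is correct. It differs from the paper's only in how you identify $n-\log_2 d(\pi)$ with the number of cycle blocks.

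The paper never mentions the cyclomatic number. It evaluates the block factorization \eqref{eq:block factorization} at $x=1$ to get $d(\pi)=T_\pi(1)=2^{b+\sum_B(\lambda_B-1)}$. It then uses the edge partition $n=b+\sum_B\lambda_B$, so $n-\log_2 d(\pi)$ is the number of cycle blocks with no further graph theory.

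You instead take $\log_2 d(\pi)=q-1$ from Theorem~\ref{thm:degree of realizability}. You then prove separately that a connected cactus with $q$ vertices and $n$ edges has exactly $n-q+1$ cycle blocks. Both your block-cut-tree count and your edge-deletion induction are sound, including loops and $2$-cycles.

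The two computations are equivalent: comparing $2^{q-1}$ with $T_\pi(1)$ recovers exactly your identity. The paper's route is shorter because the factorization already encodes this fact. Yours makes the meaning explicit: $\log_2 d(\pi)=\dim\operatorname{Cut}(K_\pi)$, and $n-\log_2 d(\pi)$ is the cycle rank of $K_\pi$.

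That framing also shows the inequality alone is just ``frustration index $\le$ cycle rank'', which holds for any connected signed multigraph. Switch so that a spanning tree is all positive; then only the $n-q+1$ non-tree edges can be touches. So the cactus theorem is needed only for the characterization of equality.
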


\begin{proof}
    Let $c$ be the number of cycle blocks $B$ with $\eta_{B}=1$. Using~\eqref{eq:block factorization} to evaluate $T_\pi(1)$ gives
    $$
    d(\pi)=2^{b+\sum_{B}(\lambda_{B}-1)}.
    $$
    Since the bridges and cycle blocks partition $E(K_\pi)$, we have
    $$
    n=b+\sum_{B}\lambda_{B}.
    $$
    Thus
    $$
    t_-(\pi)+\log_2 d(\pi)
    =
    c+b+\sum_{B}(\lambda_{B}-1)
    \leq n.
    $$
    Equality holds precisely when every cycle block is counted in $c$, that is, when $\eta_{B}=1$ for every cycle block $B$.
\end{proof}

\begin{corollary}\label{cor:roots of tangency polynomials}
    For an s-meandric permutation $\pi$, every zero of $T_\pi(x)$ lies on the imaginary axis or is equal to $-1$.
\end{corollary}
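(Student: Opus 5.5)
By Theorem~\ref{thm:block factorization}, $T_\pi(x)$ is a product of factors $(1+x)$, one for each bridge, and factors $E_{\lambda,\eta}(x)$, one for each cycle block. A zero of a product is a zero of one of its factors. So it is enough to locate the zeros of each factor. The bridge factors contribute only $x=-1$. The remaining task is to show that every zero of $E_{\lambda,\eta}(x)$ lies on the imaginary axis.

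First, the degenerate cases. We have $E_{1,0}=1$, which has no zeros, and $E_{1,1}=x$, whose only zero is $0$ and lies on the imaginary axis.

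For general $\lambda\ge1$, suppose $E_{\lambda,\eta}(x)=0$. By the closed form $E_{\lambda,\eta}(x)=\tfrac12\bigl((1+x)^\lambda+(-1)^\eta(1-x)^\lambda\bigr)$, this gives
$$
(1+x)^\lambda=-(-1)^\eta(1-x)^\lambda .
$$
The point $x=1$ is not a root: there the left side equals $2^\lambda$ and the right side equals $0$. So we may divide by $(1-x)^\lambda$ to get
$$
\left(\frac{1+x}{1-x}\right)^\lambda=\pm1 .
$$
Taking absolute values yields $|1+x|=|1-x|$. This says $x$ is equidistant from $1$ and $-1$, which holds exactly when $\operatorname{Re}x=0$. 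Hence every zero of $E_{\lambda,\eta}$ is purely imaginary.

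The only step needing any care is the division by $1-x$. It is justified because $x=1$ is never a zero, as checked above; if it were overlooked, a spurious root could slip through. Everything else is immediate once Theorem~\ref{thm:block factorization} is invoked.
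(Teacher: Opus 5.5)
Your proof is correct and follows the paper's argument exactly. You reduce via Theorem~\ref{thm:block factorization} to the factors $1+x$ and $E_{\lambda,\eta}(x)$, then take moduli in $(1+z)^\lambda=(-1)^{\eta+1}(1-z)^\lambda$ to get $|1+z|=|1-z|$, i.e.\ $\operatorname{Re}z=0$. The check at $x=1$ and the division by $(1-x)^\lambda$ are harmless but unnecessary: taking absolute values of both sides directly already gives $|1+x|=|1-x|$, so no spurious root could arise either way.
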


\begin{proof}
    By Theorem~\ref{thm:block factorization}, it suffices to consider the factors $1+x$ and $E_{\lambda,\eta}(x)$. The first has zero $-1$. If $E_{\lambda,\eta}(z)=0$, then
    $$
    (1+z)^\lambda=(-1)^{\eta+1}(1-z)^\lambda,
    $$
    so $|1+z|=|1-z|$, which is equivalent to $\operatorname{Re}z=0$.
\end{proof}

\subsection{Ising model interpretation of the tangency polynomial}\label{sec:Ising}
The tangency polynomial has a natural interpretation as an Ising partition function (we refer the reader to~\cite{DuminilCopin20IsingPotts} for an introduction to Ising models and to~\cite{Sokal05TuttePotts} for the graph-polynomial interpretation of finite Ising partition functions). Let $\pi$ be an s-meandric permutation, and choose a representative $(K_\pi,J)$ of its signed component spine $\Sigma_\pi$. A spin configuration $\sigma\in\{-1,1\}^{V(K_\pi)}$, considered up to global reversal $\sigma\mapsto-\sigma$, determines a realization of $\pi$. An edge $e$ joining vertices $u$ and $v$ corresponds to a touch if and only if $J(e)\sigma_u\sigma_v=-1$. Therefore, if
$$
Z_{K_\pi}(\beta;J)
:=
\sum_{\sigma\in\{-1,1\}^{V(K_\pi)}}
\exp\left(
    \beta\sum_{e=uv}J(e)\sigma_u\sigma_v
\right)
$$
is the partition function of the zero-field signed Ising model on $K_\pi$, with inverse temperature $\beta\geq0$, then
$$
T_\pi\left(e^{-2\beta}\right)
=
\frac{1}{2}e^{-\beta|E(K_\pi)|}Z_{K_\pi}(\beta;J).
$$
Thus, in Ising model terminology, $T_\pi(x)$ enumerates spin configurations modulo global reversal by their number of frustrated edges. The minimum tangency number $t_-(\pi)$ is the minimum number of negative edges among the representatives of $\Sigma_\pi$, usually called its frustration index. The coefficient $[x^{t_-(\pi)}]T_\pi(x)$ is half the ground-state degeneracy. Changing the representative corresponds to a gauge transformation: for a switching function $s\colon V(K_\pi)\to\{-1,1\}$, we simultaneously replace $J(e)$ by $s(u)J(e)s(v)$ and $\sigma_v$ by $s(v)\sigma_v$. This leaves the partition function unchanged.
    
\subsection{Support and coefficients of the tangency polynomial}
\begin{definition}\label{def:eulerian component spine}
    Let $\pi\in S_n$ be an s-meandric permutation with arcs $A_0,A_1,\dots,A_n$. We say that its component spine $K_\pi$ is \emph{Eulerian} if $A_0$ and $A_n$ correspond to the same vertex in $K_\pi$. We say that it is \emph{non-Eulerian} otherwise.
\end{definition}
By Remark~\ref{rem:component spine has Euler trail}, this agrees with the usual graph-theoretic terminology.

\begin{theorem}\label{thm:polynomial coefficients}
    Let $\pi\in S_n$ be an s-meandric permutation, and let $t_-(\pi)$ and $t_+(\pi)$ be its minimum and maximum tangency numbers, respectively.
    \begin{enumerate}
        \item \label{thm:poly coef case 1} If $K_\pi$ is Eulerian, every exponent in $T_\pi(x)$ is congruent to $t_-(\pi)$ modulo $2$, and every such exponent between $t_-(\pi)$ and $t_+(\pi)$ occurs. The nonzero coefficients of $T_\pi(x)$ form a log-concave sequence.
        \item \label{thm:poly coef case 2} If $K_\pi$ is non-Eulerian, every integer between $t_-(\pi)$ and $t_+(\pi)$ occurs as an exponent in $T_\pi(x)$. The coefficients of $T_\pi(x)$ form a unimodal sequence.
    \end{enumerate}
\end{theorem}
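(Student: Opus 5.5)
The plan is to read everything off the block factorization (Theorem~\ref{thm:block factorization}). The first step is to show that $K_\pi$ is Eulerian if and only if it has no bridges. In the cactus $K_\pi$ (Theorem~\ref{thm:cactus}), every cycle block, including a loop, contributes an even amount to the degree of each of its vertices. Hence a vertex has odd degree exactly when it meets an odd number of bridges. If $K_\pi$ is Eulerian, all degrees are even, and a connected graph with all degrees even has no bridge, so $b=0$. If $K_\pi$ is non-Eulerian, there are two odd-degree vertices by Remark~\ref{rem:component spine has Euler trail}, so some vertex meets a bridge and $b\ge 1$.

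Next I would write each cycle factor as $E_{\lambda,\eta}(x)=x^{\eta}P_{\lambda,\eta}(x^2)$, where $P_{\lambda,\eta}(y)=\sum_k\binom{\lambda}{2k+\eta}y^k$ has positive coefficients and no internal zeros. By Corollary~\ref{cor:roots of tangency polynomials} (or the computation in its proof), every zero $z$ of $E_{\lambda,\eta}$ lies on the imaginary axis, so $y=z^2\le 0$. Thus each $P_{\lambda,\eta}$ is real-rooted with nonpositive roots. Putting $Q=\prod_B P_{\lambda_B,\eta_B}$ and using the formula for $t_-(\pi)$, Theorem~\ref{thm:block factorization} gives
$$T_\pi(x)=(1+x)^b\,x^{t_-(\pi)}\,Q(x^2),$$
where $Q$ is real-rooted with only nonpositive roots and has a positive constant term. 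By Newton's inequalities, the coefficient sequence $q_0,\dots,q_m$ of $Q$ is positive (so it has no internal zeros) and log-concave.

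In the Eulerian case $b=0$, so $T_\pi(x)=x^{t_-}Q(x^2)$. Its exponents are exactly $t_-,t_-+2,\dots,t_-+2m=t_+$, and its nonzero coefficients are $q_0,\dots,q_m$, which are log-concave. This settles part~\eqref{thm:poly coef case 1}.

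In the non-Eulerian case $b\ge1$. First, $(1+x)Q(x^2)$ has coefficient sequence $q_0,q_0,q_1,q_1,\dots,q_m,q_m$. This sequence is positive, hence has contiguous support, and it is unimodal because a log-concave positive sequence is unimodal. Multiplying by $(1+x)^{b-1}$ and $x^{t_-}$ keeps the support contiguous from $t_-$ to $t_+$. To see that unimodality is also preserved, I would invoke the classical theorem (Ibragimov) that the convolution of a log-concave sequence without internal zeros, such as the binomial coefficients of $(1+x)^{b-1}$, with a unimodal sequence is unimodal. This gives part~\eqref{thm:poly coef case 2}.

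The main points needing care are the identification ``Eulerian $\iff b=0$'' via the parity of bridge-incidence in a cactus, and the correct citation of the convolution-preserves-unimodality lemma. The real-rootedness route avoids a direct verification of log-concavity for products.
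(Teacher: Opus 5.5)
Your proof is correct. Its skeleton matches the paper's: the block factorization, writing each cycle factor as $x^{\eta}P_{\lambda,\eta}(x^2)$, showing that $K_\pi$ is Eulerian exactly when $b=0$, and noting that one factor $1+x$ turns the coefficients of $Q(x^2)$ into $q_0,q_0,q_1,q_1,\dots$ with full support. You differ from the paper in how three sub-steps are justified.

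\emph{Eulerian $\iff b=0$.} You use degree parity in a cactus. The paper instead collapses the cycle blocks, projects the Euler trail onto the resulting tree of bridges, and concludes that this tree is a path, trivial exactly when there are no bridges. Your argument is shorter. The paper's also shows that the bridges lie along a path, which is not needed here.

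\emph{Log-concavity of $Q$.} The paper notes that each $P_{\lambda,\eta}$ has the log-concave coefficients $\binom{\lambda}{\eta},\binom{\lambda}{\eta+2},\dots$ and applies Hoggar's theorem that convolution preserves log-concavity of positive sequences without internal zeros. You instead get real-rootedness of $Q$ from the zero location in Corollary~\ref{cor:roots of tangency polynomials} and apply Newton's inequalities. Your route gives a stronger conclusion: ultra-log-concavity, and $Q$ has only negative real zeros. This is exactly the fact the paper uses later in the proof of Theorem~\ref{thm:central limit theorem}. The paper's route needs no information about zeros.

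\emph{Remaining $b-1$ bridge factors.} You cite the general theorem that convolving a unimodal sequence with a log-concave one preserves unimodality. The paper uses only the special case of the sequence $(1,1)$, which is a one-line check. If $(a_k)$ is unimodal with mode $k_0$, then $c_k=a_k+a_{k-1}$ satisfies $c_k-c_{k-1}=a_k-a_{k-2}$. This difference is $\geq 0$ for $k\le k_0$ and $\le 0$ for $k\ge k_0+2$, so $(c_k)$ is unimodal.

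One wording point: the doubled sequence $q_0,q_0,q_1,q_1,\dots$ is generally not log-concave. For $\pi=(1,4,3,2)$ it is $1,1,3,3$. Its unimodality therefore has to come from the unimodality of $(q_k)$, which is presumably what you meant.
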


\begin{proof}
    As noted in Remark~\ref{rem:component spine has Euler trail}, a component spine has an Euler trail. Delete all loops and collapse each non-loop cycle block to a vertex, deleting its edges. The resulting graph is a tree whose edges correspond precisely to the bridges of $K_\pi$, and the Euler trail projects to an Euler trail that uses every edge of this tree exactly once. A tree admitting such a trail is a path, possibly consisting of a single vertex. The latter occurs exactly when $K_\pi$ has no bridges. Since cycle blocks contribute even degrees at every vertex and an Eulerian graph has no bridges, this is equivalent to $K_\pi$ being Eulerian. Thus, an Eulerian component spine has no bridges, while for a non-Eulerian component spine the resulting tree is a nontrivial path.

    For every integer $\lambda \geq 1$ and $\eta\in\{0,1\}$, we can write
    $
    E_{\lambda,\eta}(x)
    =
    x^\eta P_{\lambda,\eta}(x^2),
    $
    where the coefficients of $P_{\lambda,\eta}$ are
    $$
    \binom{\lambda}{\eta},
    \binom{\lambda}{\eta+2},
    \binom{\lambda}{\eta+4},
    \dots
    $$
    up to the largest index not exceeding $\lambda$. These coefficients form a positive log-concave sequence.

    If $K_\pi$ is Eulerian, it has no bridges and hence, by Theorem~\ref{thm:block factorization},
    $
    T_\pi(x)=x^{t_-(\pi)}Q(x^2),
    $
    where the coefficients of $Q$ are obtained by convolving the coefficient sequences of the polynomials $P_{\lambda_{B},\eta_{B}}$. Since convolution preserves log-concavity for nonnegative log-concave sequences without internal zeros (see, for example,~\cite{Hoggar1974Chromatic}), the coefficients of $Q$ are positive and log-concave. This proves case~\ref{thm:poly coef case 1}.

    Suppose now that $K_\pi$ is non-Eulerian. Then it has at least one bridge, so the factorization of $T_\pi(x)$ contains a factor $1+x$. Before taking the bridge factors into account, the polynomial has the form
    $
    x^{t_-(\pi)}Q(x^2),
    $
    where the coefficients of $Q$ form a positive log-concave, and hence unimodal, sequence. Multiplication by one factor $1+x$ repeats this coefficient sequence in adjacent degrees, so the support becomes a full interval and the resulting coefficient sequence is unimodal. Multiplication by each remaining factor $1+x$ replaces a coefficient sequence by its adjacent-sum sequence, which preserves unimodality. This proves case~\ref{thm:poly coef case 2}.
\end{proof}

\begin{example}
    If the component spine is non-Eulerian, log-concavity of the full coefficient sequence can fail. For example, for $\pi=(1,4,3,2)$ we have
    $$
    T_{\pi}(x)=1+x+3x^2+3x^3.
    $$
\end{example}

\subsection{Symmetries and operations}
\begin{theorem}\label{thm:symmetries}
    For $\pi\in S_n$,
    $$
    T_\pi(x)=T_{\pi^{-1}}(x)=T_{\pi^r}(x),
    $$
    where $\pi^r$ is the reverse-complement permutation defined by
    $$
    \pi^r(i)=n+1-\pi(n+1-i).
    $$
\end{theorem}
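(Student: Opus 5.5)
The plan is to build, for each of the two operations, a bijection $\mathcal{R}(\pi)\to\mathcal{R}(\pi')$ (with $\pi'=\pi^r$ or $\pi^{-1}$) that preserves the number of touches $\operatorname{t}$. Since $T_\pi$ is the generating function of $\operatorname{t}$ over $\mathcal{R}(\pi)$, the equalities of polynomials follow at once, including the case $T_\pi\equiv 0$ (empty $\mathcal{R}$ on both sides).

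\emph{Reverse-complement.} Let $\rho$ be the rotation by $\pi$ radians about $((n+1)/2,0)$. It maps $\ell$ to itself, $p_k$ to $p_{n+1-k}$, and $H^\pm$ to $H^\mp$, and it sends semicircles with diameter on $\ell$ to such semicircles.

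1. Given $M\in\mathcal{R}(\pi)$, the arc $\rho(M)$ runs from $p_{n+1}$ to $p_0$. Traverse it in reverse, from $p_0$ to $p_{n+1}$.
2. Along this reversed traversal, the $i$-th intersection point is $\rho(p_{\pi(n+1-i)})=p_{n+1-\pi(n+1-i)}=p_{\pi^r(i)}$.
3. If the semicircle now incident to $p_0$ lies in $H^-$, also apply the reflection in $\ell$. This does not change the encountered order of points.
4. Rotations and reflections map pairs of semicircles lying in the same half-plane to pairs lying in the same half-plane. Hence touches go to touches and $\operatorname{t}$ is preserved.
5. The same construction applied to $\pi^r$ inverts this map, because $(\pi^r)^r=\pi$. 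This gives the bijection and $T_\pi=T_{\pi^r}$.

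Alternatively, one can check directly that the interlacement graph and the tangency vectors transform compatibly; the geometric argument avoids this.

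\emph{Inverse.} The idea is to swap the roles of the two curves. View $M$ and the segment $S=[p_0,p_{n+1}]\subset\ell$ as two simple arcs in the sphere $S^2=\R^2\cup\{\infty\}$ with common endpoints. They meet in $n$ interior points, each either a crossing or a touch; this is a local topological notion, namely whether $M$ passes from one side of $S$ to the other. The rest of $\ell$ is an arc $\ell'$ from $p_{n+1}$ to $p_0$ through $\infty$ that meets $M\cup S$ only at its endpoints, and it lies in a single face of $M\cup S$.

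1. By the Schoenflies theorem, choose a homeomorphism $h$ of $S^2$ with $h(M)=S$, sending $p_0$ to $p_0$ and $p_{n+1}$ to $p_{n+1}$.
2. Then $h(S)$ is an arc from $p_0$ to $p_{n+1}$ meeting $\ell$ only at $p_0,p_{n+1}$ and at $n$ interior points of $S$. If $p_k$ is the $j$-th point of $M$, i.e. $k=\pi(j)$, then $h(p_k)$ is the $j$-th point of $S$. Traversing $h(S)$ from left to right meets these points in the order $\pi^{-1}(1),\dots,\pi^{-1}(n)$ along $S$.
3. The touch/crossing type at each point is preserved, because $h$ is a homeomorphism.
4. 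Straighten $h(S)$ into the normalized semicircle model. Then fix the initial half-plane by a reflection if necessary. This produces an element of $\mathcal{R}(\pi^{-1})$ with the same number of touches.
5. Applying the same construction to $\pi^{-1}$ inverts this map, which gives the bijection.

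\emph{Main obstacle.} The key step is the rigidity statement underlying step~4: the normalized model is unique, up to the topological choices, for a pair of arcs with prescribed intersection order and types. Precisely, one must show that a topological configuration of two arcs from $p_0$ to $p_{n+1}$, with finitely many interior contacts, is determined up to homeomorphism of the sphere preserving $\ell$ by the half-plane of each complementary arc of the moving curve. That data is exactly a proper $2$-colouring of the interlacement graph. Theorem~\ref{thm:degree of realizability} then identifies it with a normalized realization.

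I would try first to avoid topology and argue combinatorially. Each arc of $M$ between consecutive points lies in $H^\pm$. Read off a colouring of the arcs of $\pi^{-1}$, namely the segments $[p_k,p_{k+1}]$ of $\ell$, by recording on which side of $M$ each segment lies. One then checks that this colouring is proper for $G_{\pi^{-1}}$, using the Jordan curve theorem for $M\cup S$. One also checks that a contact is a touch for one curve exactly when it is a touch for the other. Finally, since the construction is symmetric under swapping the curves, applying it twice returns the original colouring, so it is a bijection between normalized proper colourings.
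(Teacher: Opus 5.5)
Your overall strategy is the paper's. For $\pi^r$ you use a rigid motion with reversed traversal, plus a reflection in $\ell$ if needed; your half-turn about $((n+1)/2,0)$ works as well as the paper's reflection in the vertical line. For $\pi^{-1}$ you interchange the roles of $M$ and the segment. The reverse-complement part is complete. The inverse part is not yet an argument, and the gap is exactly at the step you call the main obstacle. It closes without any topological rigidity theorem.

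First, a homeomorphism satisfying only $h(M)=S$ does not give step~2. Nothing prevents $h(S)$ from crossing $\ell'$. In that case the subarcs of $h(S)$ between consecutive contacts need not lie in the open half-planes, and there is nothing to straighten. Instead, choose $h$ by Schoenflies applied to the Jordan curve $M\cup\ell'$, with $h(M)=S$ and $h(\ell')=\ell'$; this is the paper's ``extend $M$ along the exterior rays of $\ell$ and straighten it to $\ell$''. Then each open subarc of $h(S)$ lies in $H^+$ or $H^-$. Two subarcs in the same half-plane are disjoint, so by the Jordan curve theorem their endpoints do not interlace. Hence the half-plane data is a proper $2$-colouring of $G_{\pi^{-1}}$, and the subarcs can simply be replaced by semicircles, as in Theorem~\ref{thm:realization criteria}.

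Second, for well-definedness and invertibility you only need the observation the paper uses: a normalized realization of a fixed permutation is determined by its touch types. Indeed, the first semicircle lies in $H^+$, and $s_i$ lies in the half-plane of $s_{i-1}$ exactly when $p_{\pi(i)}$ is a touch. By the crossing/touch symmetry you mention, the output of your construction has at $p_j$ the touch type that $M$ has at $p_{\pi(j)}$. So the output does not depend on $h$ or on the straightening. Applying the construction twice gives a normalized realization of $\pi$ with the same touch type as $M$ at every $p_k$, hence $M$ itself. This argument should replace both your rigidity statement and the appeal to ``symmetry under swapping the curves'' in your last paragraph.
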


\begin{proof}
    Let $M$ be a singular meander realizing $\pi$. Reflecting $\R^2$ across the line $\left\{\left(\frac{n+1}{2}, y\right) \mid y\in \R\right\}$ and reversing the direction of traversal gives, after a possible reflection across $\ell$, a singular meander realizing $\pi^r$. This operation preserves the number of touches and is an involution.

    To obtain $\pi^{-1}$, interchange the roles of $M$ and the segment $I=\{(x,0)\mid x\in[0;n+1]\}$. To normalize the resulting configuration, extend $M$ along the two exterior rays of $\ell$ and straighten the resulting proper simple curve to $\ell$ by an ambient isotopy, placing the marked intersection points at $p_0,\dots,p_{n+1}$ in their order along $M$. The subarcs of the image of $I$ have noninterlacing endpoints within each half-plane, so they can be replaced by the corresponding semicircles. Reflect across $\ell$ if necessary to ensure the normalization at $p_0$.

    The orders of the intersection points along the two curves are interchanged, so the resulting permutation is $\pi^{-1}$. These operations preserve the touch type at every intersection. For a fixed permutation, the ordered touch types and the initial normalization determine the half-plane of every semicircle. Thus, the normalized realization is independent of the normalization choices, and interchanging the curves twice recovers $M$. This gives the required touch-preserving bijection.
\end{proof}

\begin{definition}
    Let $\pi=(a_1,a_2,\dots,a_n)\in S_n$ and $\tau=(b_1,b_2,\dots,b_m)\in S_m$. Their \emph{direct sum} is the permutation $\pi\oplus\tau\in S_{n+m}$ defined by
    $$
    \pi\oplus\tau
    =
    (a_1,a_2,\dots,a_n,n+b_1,n+b_2,\dots,n+b_m).
    $$
\end{definition}

\begin{theorem}\label{thm:poly sum of permutations}
    For $\pi\in S_n$ and $\tau\in S_m$,
    $$
    T_{\pi\oplus\tau}(x)=T_\pi(x)T_\tau(x).
    $$
\end{theorem}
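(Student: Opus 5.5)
The plan is to work at the level of realizations and exhibit a touch-preserving bijection
$$
\mathcal{R}(\pi\oplus\tau)\longleftrightarrow\mathcal{R}(\pi)\times\mathcal{R}(\tau),
$$
so that the touch numbers add. Both sides vanish simultaneously: if either factor is not s-meandric, then neither is the sum.

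First I would describe the arcs of $\sigma=\pi\oplus\tau$. They are
$$
A_0,\dots,A_{n-1}\ \text{(the arcs of $\pi$ except the last)},\qquad A_n=(a_n,\,n+b_1),\qquad A_{n+1},\dots,A_{n+m}\ \text{(the arcs of $\tau$ shifted by $n$, except the first)}.
$$
The last arc of $\pi$ is $(a_n,n+1)$ and the first arc of $\tau$ is $(0,b_1)$. In $\sigma$ these two are merged into the single arc $A_n$.

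The key step is to compare interlacements. Arcs of $\pi$ have endpoints in $[0,n+1]$, and apart from $(a_n,n+1)$ they lie in $[0,n]$. Arcs of $\tau$ other than $(0,b_1)$ lie in $[n+1,n+m+1]$ after shifting. Hence an $\pi$-arc and a $\tau$-arc never interlace in $\sigma$.

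Now take an arc $X$ of $\pi$ with both endpoints in $[0,n]$. I claim that $X$ interlaces $A_n=(a_n,n+b_1)$ if and only if it interlaces $(a_n,n+1)$. Both conditions say that exactly one endpoint of $X$ lies strictly between $a_n$ and the right end, and since the right end exceeds $n$, this is the same condition. The symmetric statement holds on the $\tau$ side.

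Consequently $G_\sigma$ is obtained from $G_\pi$ and $G_\tau$ by identifying the vertex $(a_n,n+1)$ of $G_\pi$ with the vertex $(0,b_1)$ of $G_\tau$. So $G_\sigma$ is bipartite if and only if both $G_\pi$ and $G_\tau$ are. This already matches Theorem~\ref{thm:realization criteria}.

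Next I would build the bijection. By Theorem~\ref{thm:degree of realizability}, realizations correspond to proper $2$-colorings with $A_0$ colored $1$.

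Given a coloring of $G_\sigma$, restrict it to obtain a coloring of $G_\pi$. For $G_\tau$, take the colors of $A_n,\dots,A_{n+m}$ and complement them all if necessary so that the arc $(0,b_1)$ receives color $1$. Complementing preserves properness.

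Conversely, given colorings of $G_\pi$ and $G_\tau$, flip the $\tau$ coloring so that $(0,b_1)$ gets the color of $(a_n,n+1)$, then glue the two. The gluing is proper because the only common vertex is the merged arc, and there are no cross edges. These two maps are inverse to each other.

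Finally, I would check that touches correspond. A touch at an intersection means that the two consecutive arcs there have equal colors. The intersections of $\sigma$ at $p_{a_1},\dots,p_{a_n}$ involve the same pairs of consecutive arcs as in $\pi$, with the colors unchanged. The intersections at $p_{n+b_1},\dots,p_{n+b_m}$ involve the same pairs as in $\tau$, and a global flip of the $\tau$ colors does not change whether two colors are equal. Hence $\operatorname{t}(M)=\operatorname{t}(M_\pi)+\operatorname{t}(M_\tau)$, and summing $x^{\operatorname{t}}$ over the bijection gives $T_{\pi\oplus\tau}(x)=T_\pi(x)T_\tau(x)$.

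The main obstacle is the interlacement comparison for the merged arc, in particular the boundary cases. I would verify that endpoints equal to $n+1$ or $n$ do not break the strict inequalities. This works because only the merged arc attains the relevant extreme values.
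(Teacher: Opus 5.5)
Your proof is correct and uses the same bijection as the paper's, expressed through proper $2$-colorings via Theorem~\ref{thm:degree of realizability} rather than geometric surgery. The paper deletes the last semicircle of $M_\pi$ and the first semicircle of $M_\tau$, reflects the translated $M_\tau$ so the half-planes agree (your color flip), and joins $p_{a_n}$ to $p_{n+b_1}$ by a semicircle $C$, noting that $C$ interlaces exactly the retained semicircles that the removed ones did (your interlacement comparison).

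One small precision: for an arc $X$ of $\pi$ other than the last, the criterion for interlacing either $(a_n,n+1)$ or $(a_n,n+b_1)$ is $\min X<a_n<\max X$. Your phrase ``exactly one endpoint strictly between'' would misclassify $A_{n-1}$ when $a_{n-1}>a_n$, since $A_{n-1}$ shares the endpoint $a_n$ with $A_n$. This criterion does not involve the right endpoint, so your conclusion stands.
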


\begin{proof}
    We construct a bijection
    $$
    Q\colon\mathcal{R}(\pi)\times\mathcal{R}(\tau)
    \longrightarrow
    \mathcal{R}(\pi\oplus\tau)
    $$
    such that
    $$
    \operatorname{t}(Q(M_\pi,M_\tau))
    =
    \operatorname{t}(M_\pi)+\operatorname{t}(M_\tau).
    $$
    Let $M_\pi$ and $M_\tau$ be singular meanders realizing $\pi$ and $\tau$, respectively. Let $s_n$ be the semicircle of $M_\pi$ joining $(a_n,0)$ to $(n+1,0)$, and let $s'_0$ be the semicircle of $M_\tau$ joining $(0,0)$ to $(b_1,0)$.

    Remove the interiors of these two semicircles, together with the endpoints $(n+1,0)$ and $(0,0)$, respectively. Translate the remaining part of $M_\tau$ by $n$ units to the right and, if necessary, reflect it across $\ell$ so that the half-plane containing the interior of $s'_0$ is mapped to the half-plane containing the interior of $s_n$. Since the half-plane changes at each of the $n-\operatorname{t}(M_\pi)$ transverse crossings of $M_\pi$, the required affine transformation is
    $$
    F(x,y)=\left(x+n,(-1)^{n-\operatorname{t}(M_\pi)}y\right).
    $$
    Finally, join $(a_n,0)$ to $(n+b_1,0)$ by a semicircle $C$ whose interior lies in the same half-plane as that of $s_n$.

    Within either summand, replacing the removed semicircle by $C$ does not change which retained semicircles have interlacing endpoints with it. Therefore, $C$ introduces no additional intersections. The resulting singular meander $Q(M_\pi,M_\tau)$ realizes $\pi\oplus\tau$. The local type of every intersection is preserved, including those at $(a_n,0)$ and $(n+b_1,0)$, and hence
    $$
    \operatorname{t}(Q(M_\pi,M_\tau))
    =
    \operatorname{t}(M_\pi)+\operatorname{t}(M_\tau).
    $$

    To reverse the construction, remove $C$ and restore a terminal semicircle from $(a_n,0)$ to $(n+1,0)$ for the left part and an initial semicircle from $(n,0)$ to $(n+b_1,0)$ for the right part, both on the same side of $\ell$ as $C$. Translate the right part back by $n$ units and reflect it across $\ell$ if necessary to normalize its initial arc. This recovers the two realizations and proves bijectivity. Summing over all pairs of realizations yields the statement of the theorem.
\end{proof}

\subsection{Tangencies in a random realization}
\begin{theorem}\label{thm:central limit theorem}
    Let $\pi \in S_n$ be s-meandric, and let $M\in\mathcal{R}(\pi)$ be chosen uniformly. Set $X_\pi:=\operatorname{t}(M)$ and $\Delta_\pi:=t_+(\pi)-t_-(\pi)$. Then
    $$
    \frac{\Delta_\pi}{4}\leq \operatorname{Var}X_\pi\leq\frac{\Delta_\pi}{2}.
    $$
    Moreover, there is an absolute constant $C$ such that, whenever $\Delta_\pi>0$,
    $$
    \sup_{z\in\R}\left|
        \mathbb{P}\left(\frac{X_\pi-\mathbb{E}X_\pi}{\sqrt{\operatorname{Var}X_\pi}}\leq z\right)-\Phi(z)
    \right|\leq\frac{C}{\sqrt{\Delta_\pi}},
    $$
    where $\Phi$ is the standard normal distribution function.
\end{theorem}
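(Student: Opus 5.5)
By Theorem~\ref{thm:block factorization}, $X_\pi$ is a sum of independent random variables, one for each bridge and one for each cycle block of $K_\pi$. Indeed, $T_\pi(x)/T_\pi(1)$ is the probability generating function of $X_\pi$, and it factors into normalized block factors. A bridge contributes a Bernoulli$(1/2)$ variable. A cycle block $B$ with $\lambda=\lambda_B$ and $\eta=\eta_B$ contributes a variable $Y_B$ uniform on the even-or-odd-weight subsets of a $\lambda$-set, with generating function $E_{\lambda,\eta}(x)/2^{\lambda-1}$. The first step is therefore to reduce everything to per-block statements, using additivity of variance and the fact that the per-block ranges add up: $\Delta_\pi=\sum \Delta_B$. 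Here $\Delta_B=1$ for a bridge, and for a cycle block $\Delta_B$ is the difference between the largest and smallest $j\equiv\eta\pmod 2$ in $[0,\lambda]$. Loops and the case $\lambda_B=1$ give $\Delta_B=0$ and variance $0$, so they may be discarded.

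Next I would compute the variance of each block exactly. A bridge gives variance $1/4$ with $\Delta=1$. For a cycle block with $\lambda\ge 2$, $Y_B$ is the weight of a uniformly random vector of fixed parity. Such a vector is obtained by taking $\lambda-1$ independent fair bits and letting the last bit be determined by parity. A cleaner route is through the generating function: $\log$-derivatives of $\bigl((1+x)^\lambda\pm(1-x)^\lambda\bigr)/2^\lambda$ at $x=1$. The correction term $(1-x)^\lambda$ and its first derivatives vanish at $x=1$ when $\lambda\ge 3$, which gives mean $\lambda/2$ and variance $\lambda/4$ exactly. For $\lambda=2$ the variable is constant $1$ (if $\eta=1$) or uniform on $\{0,2\}$ (if $\eta=0$), with variance $0$ or $1$ respectively. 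Concretely, I would verify that $\operatorname{Var}Y_B=\lambda/4$ for $\lambda\ge 3$. I would then compare with $\Delta_B$, which equals $\lambda$, $\lambda-1$ or $\lambda-2$ depending on the parities, and is at least $1$ for $\lambda\ge 3$. This gives $\Delta_B/4\le \lambda/4\le \Delta_B/2$, where the upper bound requires $\lambda\le 2\Delta_B$, true since $\Delta_B\ge\lambda-2\ge\lambda/2$ once $\lambda\ge4$, with $\lambda=3$ checked directly ($\Delta_B=2$). The case $\lambda=2,\eta=0$ gives variance $1$ with $\Delta_B=2$, and the remaining case is trivial. Summing over blocks yields $\Delta_\pi/4\le\operatorname{Var}X_\pi\le\Delta_\pi/2$.

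For the Berry--Esseen bound I would apply the classical Berry--Esseen theorem for independent, non-identically distributed summands. This gives a Kolmogorov distance at most $C_0\sum_B\mathbb{E}|Y_B-\mathbb{E}Y_B|^3/(\operatorname{Var}X_\pi)^{3/2}$. The main obstacle is that a large cycle block is not uniformly bounded, so its third absolute moment cannot simply be bounded by its range times its variance; that would give $\lambda^2$ instead of $\lambda^{3/2}$. To handle this, I would instead decompose each block further into independent bounded pieces. Representing $Y_B$ as $\sum_{i<\lambda}\xi_i+\xi_\lambda$ with the last bit dependent is not independent, so that representation fails. The alternative I would try first is to note that $Y_B-\lambda/2$ has a third central moment of order $\lambda^{3/2}$: it is within total variation $O(2^{-\lambda})$-type error of Binomial$(\lambda,1/2)$ conditioned on parity, and the moments of a parity-conditioned variable are at most twice the binomial ones. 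This gives $\mathbb{E}|Y_B-\mathbb{E}Y_B|^3\le 2\,\mathbb{E}|\mathrm{Bin}(\lambda,1/2)-\lambda/2|^3\le C_1\lambda^{3/2}$. Since $\lambda^{3/2}\le C_2\lambda\cdot\sqrt{\lambda}$ and $\sqrt\lambda\le\sqrt{2\Delta_\pi}$, this alone only gives a bound of order $\sqrt{\lambda_{\max}}/\sqrt{\Delta_\pi}$-ish constants, which is not enough when one block dominates.

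I therefore expect the dominant-block case to need a separate argument. If one block carries most of the variance, I would compare directly: the normalized parity-conditioned binomial is itself within $O(\lambda^{-1/2})$ of $\Phi$ in Kolmogorov distance, by the local de Moivre--Laplace theorem applied on the lattice $2\mathbb{Z}+\eta$. The general case can then be handled by using the Fourier (Esseen smoothing) form of Berry--Esseen directly. The characteristic function of $Y_B$ is $\bigl((1+e^{it})^\lambda\pm(1-e^{it})^\lambda\bigr)/2^\lambda$. The second term is bounded by $|\sin(t/2)|^\lambda$, which is negligible for $|t|\lesssim 1$, while the first is the binomial characteristic function. Consequently the product of characteristic functions matches that of independent Bernoulli variables (total count $\sum\lambda_B+b\asymp\Delta_\pi$) up to an error controlled in the Esseen integral over $|t|\le c\sqrt{\operatorname{Var}X_\pi}$ after rescaling. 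The Bernoulli sum satisfies Berry--Esseen with rate $O(\Delta_\pi^{-1/2})$, and the error integrals contribute $O(\Delta_\pi^{-1/2})$, giving the claimed bound with an absolute constant $C$.
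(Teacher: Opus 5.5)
Your variance computation is correct and is essentially the paper's argument: independence from the block factorization, exact variance $\lambda/4$ for $\lambda\ge3$, the bridge and $2$-cycle cases, and additivity of the ranges. The gap is in the normal-approximation part. You correctly see the obstacle: a dominant long cycle block makes the Lyapunov ratio $\sum_B\mathbb{E}|Y_B-\mathbb{E}Y_B|^3/(\operatorname{Var}X_\pi)^{3/2}$ of order one. But you then only outline an Esseen-smoothing comparison and assert that the error integrals are $O(\Delta_\pi^{-1/2})$ without carrying out the estimate.

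As you formulate it, the comparison is also wrong. A sum of $\sum\lambda_B+b$ fair Bernoulli variables does not have the variance of $X_\pi$ when $2$-cycle blocks are present: a positive $2$-cycle has variance $1$ and a negative one is constant, whereas $\operatorname{Bin}(2,1/2)$ has variance $1/2$. Correspondingly, the correction term $\pm(1-e^{it})^2/4$ is of order $t^2$, not negligible. With $\Theta(\Delta_\pi)$ such blocks, the standardized comparison sum has the wrong limiting variance and the Esseen integral error is of order one. This can be repaired by keeping those blocks exact and comparing only blocks with $\lambda_B\ge3$, but the whole smoothing estimate would still have to be done. Separately, a parity-conditioned binomial is at total variation distance $1/2$ from the binomial, not $O(2^{-\lambda})$; only your remark that conditioning on a probability-$1/2$ event at most doubles moments is valid.

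The idea you are missing is that the independent bounded decomposition you looked for does exist, via real-rootedness. By Corollary~\ref{cor:roots of tangency polynomials}, dividing $T_\pi(x)$ by $x^{t_-(\pi)}(1+x)^b$ leaves $Q(x^2)$. Here $Q(0)>0$ and all roots of $Q$ are negative reals, since $\pm\sqrt{w}$ must lie on the imaginary axis for every root $w$. Thus $Q(y)/Q(1)=\prod_j(y+a_j)/(1+a_j)$ with $a_j>0$. Hence $X_\pi\overset{d}{=}t_-(\pi)+\sum_{i=1}^{b}B_i+2\sum_jC_j$, where the $B_i$ are fair Bernoulli variables and the $C_j$ are independent Bernoulli variables with parameters $1/(1+a_j)$. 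Every centered summand is bounded by $2$, so $\mathbb{E}|Y|^3\le2\mathbb{E}Y^2$. The classical Berry--Esseen inequality for non-identically distributed summands then gives $2C_0/\sqrt{\operatorname{Var}X_\pi}\le4C_0/\sqrt{\Delta_\pi}$ directly from your lower variance bound. This is how the paper concludes, and it makes the dominant-block analysis and all Fourier estimates unnecessary.
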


\begin{proof}
    By Theorem~\ref{thm:block factorization}, $X_\pi$ can be written as a sum of independent contributions from the blocks and bridges. Here, the range of a random variable means the difference between its largest and smallest possible values. A bridge contributes a fair Bernoulli variable, a loop contributes a constant, and a cycle of length $\lambda\geq2$ contributes a binomial random variable $\operatorname{Bin}(\lambda,1/2)$ conditioned to have parity $\eta$. For $\lambda \geq 3$, this last variable has variance $\lambda/4$ and range $2\lfloor(\lambda-\eta)/2\rfloor$. A positive $2$-cycle has variance $1$ and range $2$, whereas a negative $2$-cycle is constant. Thus each contribution has variance between one quarter and one half of its range. Adding the variances and ranges proves the first assertion.

    Corollary~\ref{cor:roots of tangency polynomials} gives a second decomposition, into bounded independent variables:
    $$
    X_\pi\overset{\mathrm d}{=}
    t_-(\pi)+\sum_{i=1}^{b}B_i+2\sum_{j=1}^{r}C_j,
    $$
    where $b$ is the number of bridges. Indeed, after dividing $T_\pi(x)$ by $x^{t_-(\pi)}(1+x)^b$, the remaining polynomial is $Q(x^2)$, where $Q(0)>0$ and all roots of $Q$ are strictly negative. Factoring $Q$ and normalizing it at $1$ gives Bernoulli variables $C_j$, with $r=\deg Q$; the $B_i$ are fair Bernoulli variables. Every centered summand $Y$ has absolute value at most $2$, so $\mathbb{E}|Y|^3\leq2\mathbb{E}Y^2$. Berry's inequality~\cite[Theorem~1]{Berry41GaussianApproximation} therefore bounds the error of the normal approximation by $2C_0/\sqrt{\operatorname{Var}X_\pi}$ for an absolute constant $C_0$. Combining this with the lower variance bound proves the estimate with $C=4C_0$.
\end{proof}

\begin{remark}
    Since each non-loop edge has tangency probability $1/2$, the expectation of $X_\pi$ depends only on $n$ and the numbers $l_+$ and $l_-$ of positive and negative loops, respectively:
    $$
    \mathbb{E} X_\pi = \frac{n + l_- - l_+}{2}.
    $$
\end{remark}

\begin{remark}
    Consequently, along any sequence of s-meandric permutations with positive variances, the standardized tangency counts converge to the standard normal law if and only if $\Delta_\pi\to\infty$. Indeed, if $\Delta_\pi$ fails to tend to infinity, there is a subsequence on which $\Delta_\pi \leq D$ for some positive integer $D$. On this subsequence, the standardized tangency count has at most $D+1$ possible values, so some value has measure at least $1/(D+1)$. Its distribution function therefore differs from every continuous distribution function by at least $1/(2(D+1))$ in supremum norm, precluding convergence to a continuous law.

    The range condition cannot be replaced by growth of $n$ or $d(\pi)$: for $\pi=(2,1)\oplus\cdots\oplus(2,1)$ with $r$ summands, $T_\pi(x)=(2x)^r$, so all $2^r$ realizations have exactly $r$ touches. Appending a further summand $(1)$ gives tangency polynomial $(2x)^r(1+x)$ and variance $1/4$. The standardized count then takes the values $-1$ and $1$ with equal probabilities, although both the order and the degree of realizability tend to infinity.
\end{remark}

\section{Asymptotics and enumeration}\label{sec:asymptotics and enumeration}
    As noted in the introduction, the precise asymptotic behavior of the numbers $m_n$ of meandric permutations in $S_n$ is not known. It is conjectured that
    $$
    m_n\sim C_{\text{even}} R^n n^{-\alpha} \quad \text{for even } n, \qquad
    m_n\sim C_{\text{odd}} R^n n^{-\alpha} \quad \text{for odd } n,
    $$
    as $n\to\infty$, where $C_{\text{even}}$ and $C_{\text{odd}}$ are positive constants and $\alpha=\frac{29+\sqrt{145}}{12}$. The value of $\alpha$ was conjectured by Di Francesco, Golinelli, and Guitter~\cite{DiFrancescoGolinelliGuitter2000Asymptotics}. The best known rigorous bounds for the constant $R$ are
    $$
    3.3734\leq R\leq 3.5918,
    $$
    as proved by Albert and Paterson~\cite{AlbertPaterson05GrowthRate}, while the numerical computations of Jensen and Guttmann~\cite{JensenGuttmann2000CriticalExponents} suggest $R=3.501837(3)$. At present, the numbers $m_n$ are known up to $n=55$: Jensen computed them up to $n=43$ in~\cite{Jensen2000Transfer}, and the values for $44\leq n\leq55$ are due to Howroyd (see~\cite[\oeislink{A005316}]{oeis}).

    The enumeration of s-meandric permutations appears to be similarly difficult. Let $s_n$ denote the number of s-meandric permutations in $S_n$. We have computed these numbers up to $n=24$. Let us call a permutation $\pi=(\pi(1),\dots,\pi(n))$ \emph{sum-indecomposable} if there is no integer $k$ with $1\leq k<n$ such that $\{\pi(1),\dots,\pi(k)\}=\{1,\dots,k\}$. Once the numbers $g_n$ of sum-indecomposable s-meandric permutations in $S_n$ have been obtained, the total numbers $s_n$ are recovered from the unique direct-sum decomposition and Theorem~\ref{thm:poly sum of permutations}:
    \begin{equation}\label{eq:s-meandric numbers from sum-indecomposable}
        s_0=1,\qquad
        s_n=\sum_{k=1}^{n}g_k s_{n-k}\quad(n\geq1).
    \end{equation}
    We have computed the numbers $g_1,\dots,g_{24}$; see Appendix~\ref{appendix}. The values were computed by a frontier transfer-matrix algorithm in the spirit of Jensen~\cite{Jensen2000Transfer}, augmented with interlacement-component tracking and canonical colorings to count each permutation once. A detailed description of the algorithm, the code, and implementation details are available in the accompanying repository~\cite{Bcode}.

    Now we show that the numbers $s_n$ have a well-defined exponential growth rate and give bounds for it.
    \begin{theorem}\label{thm:estimate on s-meandric number}
        Let $s_n$ be the number of s-meandric permutations in $S_n$. Then the limit
        $$
        \mu:=\lim_{n\to\infty}\sqrt[n]{s_n}
        $$
        exists. Moreover,
        $$
        5.048565\leq\mu\leq16.
        $$
    \end{theorem}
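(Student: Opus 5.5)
Three pieces are needed: existence of the limit (supermultiplicativity), a lower bound from computed data, and an upper bound from encoding.

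\emph{Existence of the limit.} By Theorem~\ref{thm:poly sum of permutations}, $\pi\oplus\tau$ is s-meandric whenever $\pi$ and $\tau$ are, since $T_{\pi\oplus\tau}=T_\pi T_\tau\not\equiv0$. The map $(\pi,\tau)\mapsto\pi\oplus\tau$ is injective for fixed orders $n,m$, because $\pi$ is recovered from the first $n$ entries. Hence
$$
s_{n+m}\geq s_n s_m .
$$
By Fekete's lemma applied to $\log s_n$, the limit $\mu=\lim_n\sqrt[n]{s_n}$ exists and equals $\sup_n\sqrt[n]{s_n}$. In particular, every computed value gives a rigorous lower bound $\mu\geq s_n^{1/n}$.

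\emph{Lower bound.} The crude bound $s_{24}^{1/24}$ is probably below $5.0485$, so I would instead use the sum-indecomposable counts and \eqref{eq:s-meandric numbers from sum-indecomposable}. The generating function satisfies $S(z)=1/(1-G(z))$, where $G(z)=\sum_k g_k z^k$ has nonnegative coefficients. Truncating, let $G_{24}(z)=\sum_{k\le24}g_kz^k$. Then $s_n$ dominates coefficientwise the sequence with generating function $1/(1-G_{24}(z))$. That sequence grows like $\rho^{-n}$, where $\rho$ is the unique positive root of $G_{24}(\rho)=1$. So $\mu\geq1/\rho$, and I expect a numerical solution of $G_{24}(\rho)=1$ using the tabulated $g_k$ to give $1/\rho\approx5.048565$. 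The point needing care is that this bound is rigorous: a positive root with $\rho\le$ the radius of convergence gives $\limsup$ at least $1/\rho$, and Fekete converts this into a bound on $\mu$. Alternatively, one can argue directly that the sequence defined by the truncated recursion is a lower bound for $s_n$, and apply Perron–Frobenius to the companion matrix.

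\emph{Upper bound $16$.} I would encode an s-meandric permutation by a realization. Every s-meandric permutation has at least one realization by Theorem~\ref{thm:realization criteria}, so it suffices to bound the number of singular meanders of order $n$ by $C\cdot16^n$. A singular meander consists of the upper semicircles, which form a noncrossing matching on its upper endpoints, and likewise the lower semicircles. Each of the $n+2$ points is used with multiplicity: points $p_0,p_{n+1}$ once, and each interior point twice, split between upper and lower according to touch type. Encode the upper family as a noncrossing partial matching with multiplicities, i.e.\ a Motzkin-type word over $2n+2$ endpoint slots. Choosing for each interior point whether it is a crossing, an upper touch, or a lower touch costs at most $3^n$. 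Given this choice, the upper and lower configurations are noncrossing arc systems on at most $2n+2$ ordered endpoint slots, each counted by Catalan-like numbers of order $4^{n}$. This gives roughly $3^n\cdot 4^n\cdot4^n$, which is too big, so I need a sharper count. Instead, I would encode each half-plane's arcs directly by a Dyck word of length $2n+2$ over the endpoint sequence along $\ell$: each endpoint (with interior points contributing two slots, ordered left/right appropriately) is an opener or closer. This yields at most $4^{n+1}$ choices per half-plane and $\le 4^{n+1}\cdot4^{n+1}=16^{n+1}$ overall. The touch/crossing data must be absorbed into these words, since a slot belongs to the upper or lower word. Equivalently, I would treat the $2n+2$ endpoint slots as one sequence labelled by four symbols: upper-open, upper-close, lower-open, lower-close. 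This gives at most $4^{2n+2}=16^{n+1}$ configurations, which determine the meander and hence the permutation. Then $s_n\le16^{n+1}$ and $\mu\le16$.

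The main obstacle is the lower bound: justifying rigorously that the root computation with truncated $g_k$ yields exactly $5.048565$. The upper bound is a routine encoding.
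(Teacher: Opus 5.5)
Your existence argument (supermultiplicativity of $s_n$ via direct sums, then Fekete) is the same as the paper's. Your final upper-bound encoding is also valid. Order the two slots at each touch so that the two same-side semicircles become nested or adjacent rather than crossing. Then labelling the $2n+2$ slots by upper/lower and open/close determines both noncrossing matchings, and hence the permutation, so $s_n\le16^{n+1}$. The paper counts these bicolored noncrossing matchings exactly, $\sum_k\binom{2n+2}{2k}C_kC_{n+1-k}=C_{n+1}C_{n+2}$. Your cruder count has the same growth rate $16$.

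The lower bound has a genuine gap. Truncating to $G_{24}(z)=\sum_{k\le24}g_kz^k$ is rigorous, but it cannot produce $5.048565$. At the root $r\approx0.1980760601$ used in the paper, $g_{24}r^{24}\approx0.112$. The terms $g_kr^k$ are still increasing there, by a factor $g_{24}r/g_{23}\approx1.24$ from $k=23$ to $k=24$, so the discarded tail is far from negligible; in fact $G_{24}(r)\approx0.926<1$. Your root $\rho$ of $G_{24}(\rho)=1$ therefore satisfies $\rho>r$, and numerically $1/\rho\approx5.02$.

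What is missing is control of the unknown $g_n$ for $n>24$. The paper supplies it with Lemma~\ref{lem:bound on gn}: $g_{n+1}\ge2g_n$ for $n\ge2$. The proof double counts \emph{pinches}, i.e.\ inflations of an entry by $12$ or $21$. Each sum-indecomposable s-meandric permutation of order $n$ produces at least $n+1$ distinct ones of order $n+1$. Each permutation of order $n+1$ arises from at most $\lfloor(n+1)/2\rfloor$ permutations of order $n$, one per cup run.

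This lemma justifies the coefficientwise minorant $\hat G(z)=G_{24}(z)+2g_{24}z^{25}/(1-2z)\le G(z)$. Its geometric tail contributes about $0.074$ at $z=r$, and this tail is exactly what moves the root down to $r$ and gives $\mu\ge1/r>5.048565$. Without such a growth estimate for $g_n$ (or many more computed terms), your argument only proves $\mu\gtrsim5.02$.
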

    
    \begin{proof}
        \textbf{Existence of the limit.} If $\pi\in S_n$ and $\tau\in S_m$ are s-meandric permutations, then $\pi\oplus\tau\in S_{n+m}$ is also s-meandric (see Theorem~\ref{thm:poly sum of permutations}). Thus, $s_{n+m}\geq s_n s_m$.
        Therefore, the sequence is supermultiplicative and thus, by Fekete's lemma (see, for example,~\cite{Van2001combinatorics}), the limit
        $
        \lim_{n\to\infty}\sqrt[n]{s_n}
        $
        exists.
        
        \textbf{The upper bound.} We first prove that $s_n\leq C_{n+1}C_{n+2}$, where $C_m=\frac{1}{m+1}\binom{2m}{m}$
        is the Catalan number.
        Let $\pi\in S_n$ be an s-meandric permutation. Consider a singular meander $M$ realizing it. Then $M\setminus\ell$ (recall Definition~\ref{def:singular meanders}) is a union of $n+1$ semicircles, each of which lies either in $H^+$ or in $H^-$.
        By resolving each common endpoint of the semicircles into distinct nearby points of $\ell$, we obtain a perfect matching on $2(n+1)$ points of $\ell$, drawn with each edge in $H^+$ or in $H^-$ and with no crossings within either half-plane. Such a matching uniquely determines the permutation. Choosing one realization for each s-meandric permutation therefore gives an injection into the set of such matchings. The number of these matchings is
        $$
            \sum_{k=0}^{n+1}\binom{2n+2}{2k}C_kC_{n+1-k} = C_{n+1}C_{n+2}.
        $$
        Indeed, there are $\binom{2n+2}{2k}$ ways to choose the $2k$ points that are joined by semicircles in $H^+$, and there are $C_k$ and $C_{n+1-k}$ ways to form a noncrossing matching in $H^+$ and in $H^-$, respectively. Hence $s_n\leq C_{n+1}C_{n+2}$.
        Since
        $$
            \lim_{n\to\infty}\sqrt[n]{C_{n+1}C_{n+2}}=16,
        $$
        we obtain the upper bound $\mu\leq16$.
    
        \textbf{The lower bound.} For the lower bound, we use the computed values $g_1,\dots,g_{24}$ and the fact that $g_{n+1} \geq 2g_{n}$ for $n \geq 2$, which is proved in Lemma~\ref{lem:bound on gn} below. Consider the sequence $\{\hat{g}_n\}_{n \geq 1}$ defined by
        $$
        \hat{g}_n=
        \begin{cases}
            g_n, & n\leq24,\\
            2\hat{g}_{n-1}, & n>24,
        \end{cases}
        $$
        and let
        $$
        \hat{G}(z)
        =
        \sum_{n \geq 1}\hat{g}_nz^n
        =
        \sum_{n=1}^{24}g_nz^n+\frac{2g_{24}z^{25}}{1-2z}
        $$
        be its generating function. By~\eqref{eq:s-meandric numbers from sum-indecomposable} and Lemma~\ref{lem:bound on gn}, we have $s_n \geq \hat{s}_n$ for every $n \geq 0$, where $\hat{s}_n$ is defined by
        $$
        \hat{s}_0=1,\qquad
        \hat{s}_n=\sum_{k=1}^n\hat{g}_k\hat{s}_{n-k}.
        $$
        It follows that
        $$
        \sum_{n \geq 0}\hat{s}_nz^n=\frac{1}{1-\hat{G}(z)}.
        $$
        Let $r\in\left(0;\,\frac{1}{2}\right)$ be the unique positive solution of $\hat{G}(r)=1$. Then $r$ is the radius of convergence of $(1-\hat{G}(z))^{-1}$, and hence\footnote{Here we may again use Fekete's lemma to show that the limit exists.}
        $$
        \lim_{n\to\infty}\sqrt[n]{\hat{s}_n}=\frac{1}{r}.
        $$
        Therefore,
        $$
        \lim_{n\to\infty}\sqrt[n]{s_n}
        \geq
        \lim_{n\to\infty}\sqrt[n]{\hat{s}_n}
        =
        \frac{1}{r}.
        $$
        Numerically, $r \approx 0.1980760601$, and thus $\mu \geq 5.04856568$.
    \end{proof}

    \begin{remark}
        Theorem~\ref{thm:estimate on s-meandric number} implies that meandric permutations form an exponentially small proportion of s-meandric permutations. Indeed, combining its lower bound with the upper bound of Albert and Paterson~\cite{AlbertPaterson05GrowthRate}, we obtain
        $$
        \limsup_{n\to\infty}\sqrt[n]{\frac{m_n}{s_n}}
        \leq\frac{3.5918}{5.0485}<0.712.
        $$
    \end{remark}

    \begin{figure}[h]
        \centering
        \includegraphics[width=\linewidth]{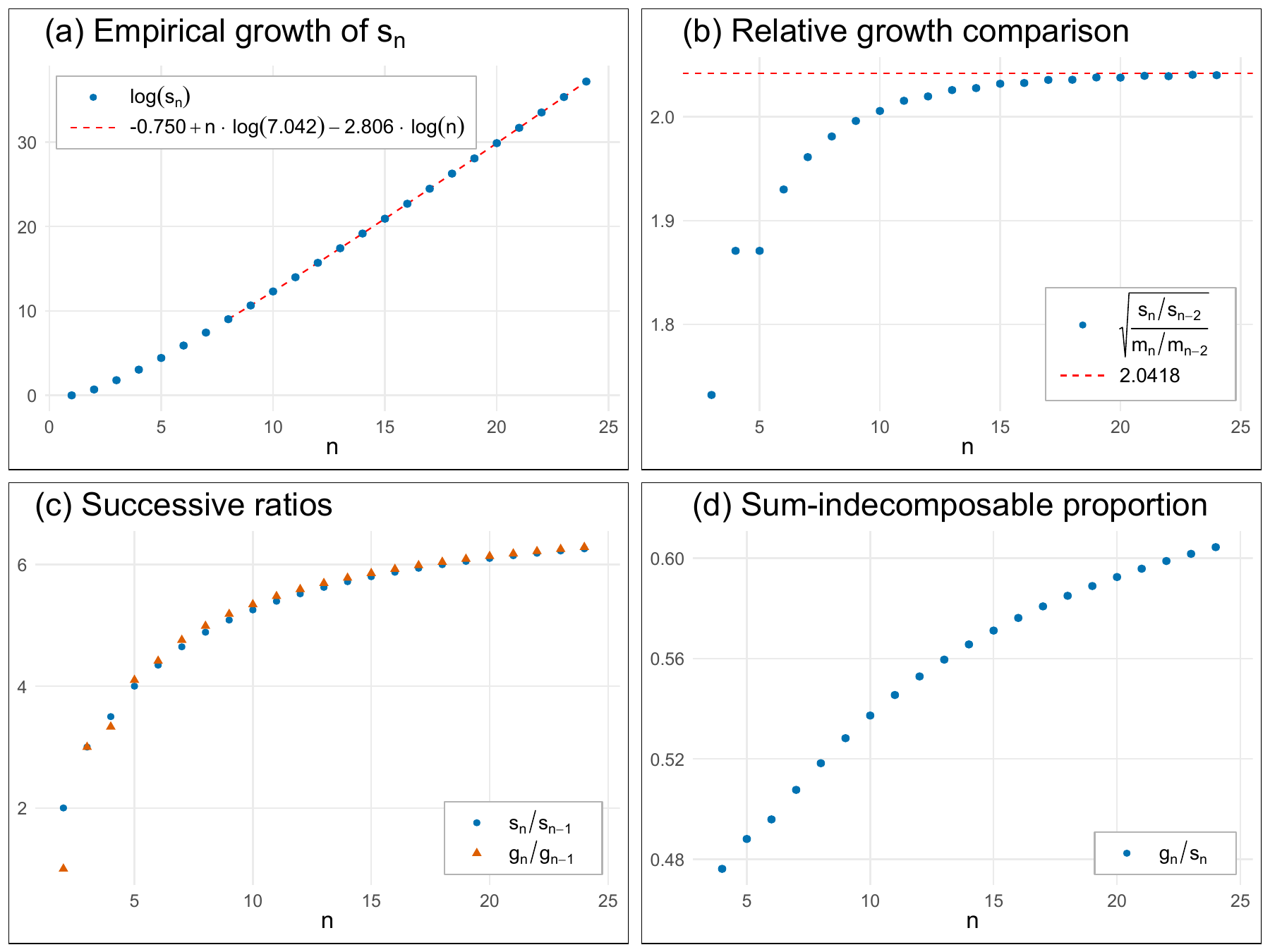}
        \caption{Observed growth of the numbers $s_n$ of s-meandric permutations.}
        \label{fig:growth s-n}
    \end{figure}
    
    \begin{remark}
        Let us give a few observations on the computed values.
        \begin{itemize}
            \item The numerical estimates suggest that $\mu \approx 7$, and, in fact, close to twice the observed exponential growth of meanders; see Fig.~\ref{fig:growth s-n} (a) and (b).
            \item Both computed sequences $s_n$ and $g_n$ are log-convex and have close successive ratios; see Fig.~\ref{fig:growth s-n} (c).
            \item The ratio $g_n/s_n$ increases from $0.4762$ at $n=4$ to $0.6044$ at $n=24$. This suggests that $g_n$ and $s_n$ have the same exponential growth and that a positive proportion of s-meandric permutations may be sum-indecomposable; see Fig.~\ref{fig:growth s-n} (d).
        \end{itemize}
    \end{remark}

    \begin{lemma}\label{lem:bound on gn}
    Let $g_n$ be the number of sum-indecomposable s-meandric permutations in $S_n$, and let $n \geq 2$. Then
    $$
    g_{n+1} \geq 2g_{n}.
    $$
    \end{lemma}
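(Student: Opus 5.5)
The plan is to construct two maps $\alpha,\beta$ from the set $I_n$ of sum-indecomposable s-meandric permutations in $S_n$ to $I_{n+1}$. Each map should be injective, and their images should be disjoint, so that $g_{n+1}\ge|\alpha(I_n)|+|\beta(I_n)|=2g_n$. The disjointness is not yet established (see the last paragraph).

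\textbf{The first map.} Set $\alpha(\pi)=(n+1,\pi(1),\dots,\pi(n))$. This is injective, and it is sum-indecomposable because $\sigma(1)=n+1$ means no proper prefix can equal $\{1,\dots,k\}$. To see that it is s-meandric, I would argue geometrically, using Theorem~\ref{thm:realization criteria} only as a check. Take a realization $M$ of $\pi$, reflect it across $\ell$, and move its endpoint from $p_{n+1}$ to $p_{n+2}$. Prepend a big upper semicircle from $p_0$ to $p_{n+1}$, followed by the (now lower) first arc from $p_{n+1}$ to $p_{\pi(1)}$.

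The arcs of $\alpha(\pi)$ are then $(0,n+1)$, $(n+1,\pi(1))$, the internal arcs of $\pi$, and $(\pi(n),n+2)$. I would check the new interlacements directly from Definition~\ref{def:interlacement graph}:
\begin{itemize}
\item $(0,n+1)$ interlaces only $(\pi(n),n+2)$;
\item $(n+1,\pi(1))$ interlaces exactly what $(0,\pi(1))$ did;
\item $(\pi(n),n+2)$ interlaces exactly what $(\pi(n),n+1)$ did, plus $(0,n+1)$.
\end{itemize}
The coloring inherited from $M$, with $(0,n+1)$ colored $1$, is therefore proper. So $G_{\alpha(\pi)}$ is bipartite.

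\textbf{The second map.} I would transport $\alpha$ by the symmetry of Theorem~\ref{thm:symmetries}, setting $\beta(\pi)=\alpha(\pi^r)^r=(\pi(1)+1,\dots,\pi(n)+1,1)$. Reverse-complement preserves s-meandricity, since $T_\pi=T_{\pi^r}$. It also preserves sum-indecomposability: a prefix $\{1,\dots,k\}$ of $\pi^r$ corresponds to the suffix $\{n-k+1,\dots,n\}$ of $\pi$, hence to the prefix $\{1,\dots,n-k\}$ of $\pi$. Hence $\beta$ is an injection $I_n\to I_{n+1}$ whose image is characterized by $\sigma(n+1)=1$.

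\textbf{Main obstacle: disjointness.} The two images can meet only at permutations with $\sigma(1)=n+1$ and $\sigma(n+1)=1$, that is, $\alpha(\pi)$ with $\pi(n)=1$. My first attempt would be to modify $\beta$ only on the colliding inputs, sending them instead to a third family that avoids both conditions. One candidate is to insert the new maximum in the second position, $(\pi(1),n+1,\pi(2),\dots,\pi(n))$, and check bipartiteness via the same semicircle surgery. Another is to insert the new value at another position adjacent to the end of the meander, where the added arc interlaces only one old arc.

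Failing that, I would try to show that the doubly-extreme permutations are few enough to be absorbed. Concretely, I would look for an injection from $\{\pi\in I_n:\pi(n)=1\}$ into $I_{n+1}\setminus(\alpha(I_n)\cup\beta(I_n))$, for instance by sending such $\pi$ to a permutation whose first entry is $n$ rather than $n+1$. The hypothesis $n\ge2$ should be exactly what guarantees the needed room; for $n=1$ the statement fails, since $g_1=1$ but $g_2=1$.
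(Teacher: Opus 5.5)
\paragraph{Gap 1: $\alpha$ does not land in $I_{n+1}$.} Your plan fails at the first step, before the disjointness issue you flag. Your interlacement check misses one pair: the new arcs $(n+1,\pi(1))$ and $(\pi(n),n+2)$.

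For $n\ge2$ these two arcs interlace if and only if $\pi(1)<\pi(n)$. By contrast, $A_0=(0,\pi(1))$ and $A_n=(\pi(n),n+1)$ interlace if and only if $\pi(1)>\pi(n)$. So $G_{\alpha(\pi)}$ is $G_\pi$ with the edge $A_0A_n$ \emph{toggled}, plus the pendant vertex $(0,n+1)$. It is non-bipartite whenever $\pi(1)<\pi(n)$ and $A_0,A_n$ lie in the same component of $G_\pi$ with the same color.

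A concrete instance is $\pi=(3,1,5,2,4)$. It is sum-indecomposable and s-meandric, with bipartition $\{A_0,A_1,A_5\}\cup\{A_2,A_3,A_4\}$. But $\alpha(\pi)=(6,3,1,5,2,4)$ contains the pairwise interlacing arcs $(6,3)$, $(1,5)$, $(4,7)$, a triangle. By symmetry, $\beta$ fails at $(2,4,1,5,3)=(3,1,5,2,4)^r$.

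In fact the surgery as you describe it never produces a meander for $n\ge2$. If $A_n\subset H^-$ in $M$, the reflected and extended last arc lies in $H^+$ and crosses the big semicircle $(0,n+1)$. If $A_n\subset H^+$, then $A_0,A_n$ do not interlace, so $\pi(1)<\pi(n)$, and the two new lower arcs cross.

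\paragraph{Gap 2: the images do collide.} The disjointness problem is real, not merely unverified: $\alpha((3,2,1))=\beta((3,2,1))=(4,3,2,1)$. Your proposed repairs are not carried out, so no count of $2g_n$ is obtained.

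\paragraph{The missing idea.} You need a local operation whose effect on $G_\pi$ is controlled, and you need to replace disjoint injections by double counting. The paper inflates an entry $\pi_i$ by $12$ or by $21$, which it calls a \emph{pinch}.
\begin{itemize}
\item One choice only adds an isolated vertex to $G_\pi$, so it always preserves s-meandricity and sum-indecomposability.
\item The other choice adds an isolated vertex and the edge $A_{i-1}A_i$. This is harmless for at least one $i$.
\end{itemize}
Accounting for coincidences along cup runs, each $\pi\in I_n$ yields at least $n+1$ distinct elements of $I_{n+1}$. Each permutation in $S_{n+1}$ arises by pinching from at most $\lfloor (n+1)/2\rfloor$ permutations, one per cup run. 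Hence $(n+1)g_n\le\lfloor (n+1)/2\rfloor\,g_{n+1}$, which gives $g_{n+1}\ge 2g_n$.
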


    \begin{proof}
        Let $\pi\in S_n$, where $n \geq 2$, and write $\pi_i=\pi(i)$, with $\pi_0=0$ and $\pi_{n+1}=n+1$. Fix a position $i \in \{1,\dots,n\}$ and put $x=\pi_i$.
        Consider two permutations in $S_{n+1}$:
        \begin{align*}
            \pi^{i,+} &= (\pi_1',\dots,\pi_{i-1}',x,x+1,\pi_{i+1}',\dots,\pi_n'),\\
            \pi^{i,-} &= (\pi_1',\dots,\pi_{i-1}',x+1,x,\pi_{i+1}',\dots,\pi_n'),
        \end{align*}
        where
        $$
        \pi_k'=
        \begin{cases}
            \pi_k, & \pi_k\leq x,\\
            \pi_k+1, & \pi_k>x.
        \end{cases}
        $$
        It is easy to see that if $\pi$ is sum-indecomposable, then the same is true for both $\pi^{i,+}$ and $\pi^{i,-}$.
    
        Moreover, exactly one of these permutations has the interlacement graph obtained from $G_\pi$ by adding one isolated vertex, while the other has the interlacement graph obtained from $G_\pi$ by adding one isolated vertex and one extra edge between the vertices corresponding to the arcs $A_{i-1}$ and $A_i$. This is clear from the geometric point of view: see Fig.~\ref{fig:pi-plus and pi-minus}, where the solid arc corresponds to $A_i$ and the dashed arcs show the possible positions of $A_{i-1}$ (we assume $\pi_i<\pi_{i+1}$, while the other possibility corresponds to just switching the labels). We call the first operation a \emph{safe pinch} and the second a \emph{risky pinch}.\footnote{In permutation-pattern terminology, the operations $\pi^{i,+}$ and $\pi^{i,-}$ are often called \emph{inflations} of the entry $\pi_i$ by $12$ and $21$, respectively. We use the term ``pinch'' to emphasize its geometric origin.}
    
        \begin{figure}[h]
            \centering
            \begin{tikzpicture}
                \newcommand{\deltax}{7}            
                \newcommand{\deltay}{1.5}
                \newcommand{\arcc}[4][]{\draw[#1] (#2,#4) to [out = 90, in = 90, distance=13*((#3)-(#2))] (#3,#4);}
                        
                \arcc[thick, dashed]{0}{1}{0}
                \arcc[thick, dashed]{1}{2}{0}
                \arcc[thick, dashed]{1}{4}{0}
                \arcc[ultra thick]{1}{3}{0}
                \draw (-0.5, 0) to (4.5, 0);
            
                \node at (1, 0-0.3) {$\pi_i$};
                \node at (3, 0-0.3) {$\pi_{i+1}$};
            
                \arcc[thick, dashed]{\deltax+0}{\deltax+2}{\deltay}
                \arcc[thick, dashed]{\deltax+2}{\deltax+3}{\deltay}
                \arcc[thick, dashed]{\deltax+2}{\deltax+5}{\deltay}
                \arcc[ultra thick]{\deltax+1}{\deltax+4}{\deltay}
                \arcc[ultra thick]{\deltax+1}{\deltax+2}{\deltay}
                \draw (\deltax-0.5, \deltay) to (\deltax+5.5, \deltay);
            
                \node at (\deltax+1, \deltay-0.3) {$\pi_i$};
                \node at (\deltax+4, \deltay-0.3) {$\pi_{i+1}'$};
                \node at (\deltax+2, \deltay-0.3) {$\pi_{i}+1$};
            
                \arcc[thick, dashed]{\deltax+0}{\deltax+1}{-\deltay}
                \arcc[thick, dashed]{\deltax+1}{\deltax+3}{-\deltay}
                \arcc[thick, dashed]{\deltax+1}{\deltax+5}{-\deltay}
                \arcc[ultra thick]{\deltax+2}{\deltax+4}{-\deltay}
                \arcc[ultra thick]{\deltax+1}{\deltax+2}{-\deltay}
                \draw (\deltax-0.5, -\deltay) to (\deltax+5.5, -\deltay);
            
                \node at (\deltax+1, -\deltay-0.3) {$\pi_i$};
                \node at (\deltax+4, -\deltay-0.3) {$\pi_{i+1}'$};
                \node at (\deltax+2, -\deltay-0.3) {$\pi_{i}+1$};
            
                \draw[thick, ->] (4.7,0.3) to node[midway,sloped,above=2pt] {$\pi^{i,-}$} (\deltax-1,\deltay-0.2);
                \draw[thick, ->] (4.7,-0.3) to node[midway,sloped,below=2pt] {$\pi^{i,+}$} (\deltax-1,-\deltay+0.2);
            \end{tikzpicture}
            \caption{The arcs of permutations $\pi^{i, -}$ and $\pi^{i, +}$.}
            \label{fig:pi-plus and pi-minus}
        \end{figure}
        
        If $\pi$ is s-meandric, every safe pinch produces an s-meandric permutation. Moreover, there is at least one risky pinch that produces an s-meandric permutation. Indeed, a risky pinch produces a permutation that is not s-meandric if and only if the arcs $A_{i-1}$ and $A_i$ lie in the same connected component and in the same bipartition class. If all risky pinches produced permutations that were not s-meandric, then all the arcs $A_0,\dots,A_n$ would lie in the same connected component and in the same bipartition class, which is impossible.
    
        We say that $i\in\{1,\dots,n-1\}$ is a \emph{cup}\footnote{In permutation-pattern terminology, such an adjacent pair is often called a \emph{bond}. We use the term ``cup'' as it is more standard in the theory of meanders.} in $\pi$ if $|\pi_i-\pi_{i+1}|=1$. A \emph{cup run} is a maximal consecutive sequence
        $$
        i,i+1,\dots,i+k,\qquad k\geq1,
        $$
        such that $i,i+1,\dots,i+k-1$ are cups. In this case we say that the \emph{length} of the cup run is $k+1$. The corresponding values form a monotone sequence, either
        $$
        \pi_i,\pi_i+1,\dots,\pi_i+k
        $$
        or
        $$
        \pi_i,\pi_i-1,\dots,\pi_i-k.
        $$
    
        For a given s-meandric permutation $\pi\in S_n$, safe pinches at two distinct positions produce the same permutation if and only if the two positions belong to the same cup run; in this case, the pinch simply increases the length of the corresponding cup run by one. Also note that if $i\in\{1,\dots,n-1\}$ is a cup in $\pi$, then the vertex $A_i$ of $G_\pi$ is isolated. Consequently, every risky pinch at a position belonging to a cup run produces an s-meandric permutation. Also, for a given permutation risky pinches at distinct positions produce distinct permutations.
    
        Thus, if $\pi$ is s-meandric and $r_1,\dots,r_t$ are the lengths of all its cup runs, there are exactly
        $$
        n-\sum_{j=1}^t(r_j-1)
        $$
        distinct permutations obtained by safe pinches and at least
        $$
        \max\left\{1,\sum_{j=1}^t r_j\right\}
        $$
        distinct s-meandric permutations obtained by risky pinches. Hence at least $n+1$ distinct s-meandric permutations are obtained from $\pi$ by pinching.
    
        Conversely, the number of distinct permutations in $S_n$ that lead to a given permutation $\pi'\in S_{n+1}$ by a pinch is equal to the number of cup runs in $\pi'$. The maximal possible number of cup runs in a permutation in $S_{n+1}$ is
        $
        \left\lfloor\frac{n+1}{2}\right\rfloor.
        $
        Therefore,
        $$
        (n+1)g_n \leq \left\lfloor\frac{n+1}{2}\right\rfloor g_{n+1}.
        $$
        Since $(n+1)/\lfloor (n+1)/2\rfloor\geq2$, it follows that $g_{n+1}\geq2g_{n}$.
    \end{proof} 
    
    \begin{remark}
        In fact, the proof of Lemma~\ref{lem:bound on gn} establishes the stronger bound for $n \geq 3$
        $$
        g_n \geq \left\lceil \frac{n}{\lfloor n/2 \rfloor}g_{n-1} \right\rceil.
        $$
        But this makes the calculations in the proof of the lower bound in Theorem~\ref{thm:estimate on s-meandric number} more complicated, while providing only a tiny increment.
    \end{remark}

\section{Numbers of distinct tangency polynomials}\label{sec:number-tangency-polynomials}
A tangency polynomial does not uniquely determine its block factors $E_{\lambda,\eta}(x)$. In this section, we describe all multiplicative identities between these factors and use them to obtain an asymptotic formula for the number of distinct tangency polynomials.

\subsection{Deficient and saturated factors}
We use the notation from Section~\ref{sec:block factorization}. For each integer $m\geq1$, set
\begin{align*}
    D_m(x):=& \frac{(1+x)^m - (-1)^m(1-x)^m}{2} = E_{m,\eta}(x), \quad \eta \equiv m-1 \pmod 2,\\
    \widehat D_m(x):=&\frac{(1+x)^m + (-1)^m(1-x)^m}{2} = E_{m,\eta}(x),\quad \eta\equiv m\pmod2,
\end{align*}
where $\eta\in\{0,1\}$ in each line. Thus, $\deg D_m=m-1$ and $\deg\widehat D_m=m$. In particular, $D_1(x)=1$ and $\widehat D_1(x)=x$.

\begin{definition}
    We call $D_m$ the \emph{deficient factor} of length $m$ and $\widehat D_m$ the \emph{saturated factor} of length $m$. A \emph{formal factor datum} consists of a nonnegative integer $b$ and finite multisets of deficient factors of lengths at least $2$ and saturated factors of lengths at least $1$. Its polynomial and its \emph{visible cost} are, respectively,
    $$
    (1+x)^b\prod_i D_{d_i}(x)\prod_j\widehat D_{s_j}(x)
    \qquad\text{and}\qquad
    b+\sum_i d_i+\sum_j s_j.
    $$
\end{definition}

The factors $D_1(x)=1$ correspond to positive loops and are omitted from a formal factor datum.

\begin{theorem} \label{thm:visible realization}
    Every formal factor datum of visible cost $n$ that contains a deficient factor is realized by an s-meandric permutation in $S_n$.
\end{theorem}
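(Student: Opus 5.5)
The plan is to build the permutation in two layers. A single ``hub'' component of $G_\pi$ will carry all the cycle blocks. The bridges will then be added by direct sums with the permutation $(1)$, via Theorem~\ref{thm:poly sum of permutations}. Since $T_{(1)}(x)=1+x$, direct summing with $(1)^{\oplus b}$ multiplies by $(1+x)^b$ and adds exactly $b$ to the order. It therefore suffices to realize, in $S_{n-b}$, the product of the given deficient and saturated factors, with visible cost equal to the order.

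First I will calibrate the basic gadget. For the reversal $\rho_m=(m,m-1,\dots,1)$ the arcs $A_1,\dots,A_{m-1}$ are cups, hence isolated in $G_{\rho_m}$. The arcs $A_0=(0,m)$ and $A_m=(1,m+1)$ interlace. So $K_{\rho_m}$ is a single $m$-cycle, and since $\rho_m$ is meandric, its parity is $\eta=0$. More generally, suppose two arcs $A_j,A_{j'}$ of one component are joined along the permutation by a monotone run of $m-1$ cups. That run produces an $m$-cycle block through this component. Its frustration parity is determined by whether the two end arcs lie in the same or in opposite bipartition classes, combined with the parity of $m$. In particular $m=1$, i.e.\ two consecutive arcs of the hub, gives a loop. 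The loop is frustrated exactly when the two arcs lie in the same class and not interlacing forces them into the same half-plane. This is the mechanism behind the factor $x$ in the example $(3,2,1,6,5,4,9,8,7,11,12,10)$.

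Next I will construct the hub. Take one deficient factor $D_d$ with $d\ge2$; this is where the hypothesis is used, since $x=\widehat D_1$ and the other saturated factors cannot stand alone as a component. The hub should be a component $C$ whose arcs are visited consecutively many times, with prescribed bipartition classes at each return. Each remaining factor $D_{d_i}$ or $\widehat D_{s_j}$ is then inserted as a monotone cup run of length $d_i-1$ or $s_j-1$ between two consecutive visits of $C$. The classes of the two end arcs are chosen to produce the required parity $\eta$. Concretely, I would start from the meandric pattern of $\rho_d$ and repeatedly apply pinch-type inflations at the arcs of $C$. These are the operations $\pi^{i,\pm}$ from Lemma~\ref{lem:bound on gn}. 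A risky pinch adds an interlacement edge inside $C$, which flips a class relation, while a safe pinch or inflation by a monotone run appends isolated cup arcs. I will check that each step keeps $G_\pi$ bipartite and that the new edges attach to $C$ rather than creating new components. The spine then becomes $C$ with the desired cycles and loops glued at it, as in Fig.~\ref{fig:component spine}. By Theorem~\ref{thm:block factorization} the tangency polynomial is the product of the corresponding $E_{\lambda,\eta}$. The bookkeeping then shows that the order equals $d+\sum d_i+\sum s_j$, because every cycle block of length $\lambda$ contributes exactly $\lambda$ edges of $K_\pi$, i.e.\ $\lambda$ intersection points.

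The main obstacle is the hub construction itself. I need $C$ to have enough consecutive visits, and the two bipartition classes must be freely prescribable at every junction. The insertions must also not interlace with hub arcs in unintended ways, which would merge cup arcs into $C$ or break bipartiteness. I would first try an explicit nested family: the arcs of $C$ form a chain of pairwise interlacing ``long'' arcs, as $A_0,A_3,A_6,A_9$ do in the example. At each junction, the cups are placed locally beside a shared endpoint so that they lie under or over only the two adjacent hub arcs. Once this explicit family is written down and its interlacement graph is verified, the remaining steps are routine.
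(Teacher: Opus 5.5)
\textbf{There is a genuine gap: the one construction the theorem needs is missing.} Your reduction of the bridges to direct sums with $(1)$ is fine. So is your observation that a block's type depends on whether its two bounding hub arcs lie in the same class. But direct sums of reversals and identity permutations only ever produce products of deficient factors and powers of $1+x$.

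The whole content of the theorem is therefore a permutation whose spine carries saturated blocks (including frustrated loops, $\widehat D_1=x$) of prescribed lengths. It must also contain no stray positive loops: a positive loop uses one unit of order but contributes the factor $1$, which breaks your count $n=d+\sum d_i+\sum s_j$. You explicitly defer this (``the main obstacle is the hub construction itself \dots\ once this explicit family is written down \dots''). As it stands, nothing beyond $D_m$ and $(1+x)^b$ is actually realized.

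The paper supplies exactly this object:
$\Omega(\lambda_1,\dots,\lambda_k)=\uparrow I_2\,\uparrow I_4\cdots\downarrow I_3\,\downarrow I_1$. Its non-cup arcs form a path $L_{-1},L_0,\dots,L_{k-2},L_*$ in $G_\pi$. Hence the block on $I_i$ is bounded by equally coloured arcs for $i<k$, and by oppositely coloured arcs for $i=k$ (this uses $\lambda_k\ge2$). This gives $D_{\lambda_k}\prod_{i<k}\widehat D_{\lambda_i}$. The remaining deficient factors and the bridges are then attached by direct sums with reversals and $\operatorname{id}_b$.

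\textbf{The hub mechanism you sketch is also flawed as stated.}
\begin{enumerate}
\item $\rho_m$ is meandric only for odd $m$: its block has $\eta\equiv m-1 \pmod 2$, e.g.\ $T_{(2,1)}(x)=2x$. It does realize $D_m$ for every $m$. But your calibration $\eta=0$ would label it $\widehat D_m$ for even $m$, so parity bookkeeping built on it is off.
\item Class relations at the junctions are not free parameters; they are forced by the interlacements inside $C$. A risky pinch at two arcs already in $C$ cannot ``flip a class relation''. The new edge is either consistent with the existing bipartition, so its only effect is a subdivision, or it creates an odd cycle.
\end{enumerate}

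\textbf{How the pinch idea can be repaired.} The useful risky pinch is at the point where a block of length $\lambda\ge2$ leaves the hub. It absorbs the block's first non-hub arc (an isolated vertex) into $C$, in the class opposite to the hub arc, and turns $D_\lambda$ into $D_2\widehat D_{\lambda-1}$. Safe pinches send $D_m\mapsto D_{m+1}$ and $\widehat D_m\mapsto\widehat D_{m+1}$. For instance, $(2,1)\mapsto(2,3,1)=\Omega(1,2)$ and $(3,2,1)\mapsto(3,4,2,1)=\Omega(2,2)$. Iterating from $\rho_{s_1+1}$ would give $D_d\prod_j\widehat D_{s_j}$. That iteration and its verification are exactly what your proposal leaves undone.
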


\begin{proof}
    We first realize one deficient factor together with at least one saturated factor. Let $k \geq 2$, let $\lambda_1,\ldots,\lambda_{k-1}\geq1$, and let $\lambda_{k}\geq2$. Put $N:=\sum_{i=1}^{k} \lambda_i$. Partition $\{1, 2, \ldots,N\}$ into consecutive intervals $I_1<\cdots<I_{k}$ with $|I_i|=\lambda_i$ for $i\in \{1,2,\dots,k\}$. 
    Denote by $\uparrow I$ and $\downarrow I$ the strings consisting of the elements of $I$ written in increasing and decreasing order, respectively. Define the permutation $\Omega(\lambda_1,\ldots,\lambda_k)\in S_N$ by listing the even-indexed intervals in increasing order and then the odd-indexed intervals in decreasing order, with the following one-line notation:
    $$
    \Omega(\lambda_1,\ldots,\lambda_{k})=
    \uparrow I_2\,\uparrow I_4\,\uparrow I_6\cdots \uparrow I_{2\lfloor k/2\rfloor}
    \downarrow I_{2\lceil k/2\rceil-1}\cdots
    \downarrow I_3\,\downarrow I_1.
    $$
    For example,
    $$
    \Omega(3,2,2,4) = (\underbrace{4,5}_{\uparrow I_2},\underbrace{8,9,10,11}_{\uparrow I_4},\underbrace{7,6}_{\downarrow I_3},\underbrace{3,2,1}_{\downarrow I_1}).
    $$
    We show that, under the stated conditions on the $\lambda_i$, these permutations are always s-meandric (see Fig.~\ref{fig:realization of omega} for an example).
    \begin{figure}[h]
        \centering
        \begin{tikzpicture}[scale = 10]
            \draw[thick] (-0.05, 0) to (1.05, 0);
            \draw[ultra thick] (-0, 0)
        	to[out = 90, in = 90, distance = 4.18879] (0.333333, 0)
        	to[out = -90, in = -90, distance = 1.0472] (0.416667, 0)
        	to[out = 90, in = 90, distance = 3.14159] (0.666667, 0)
        	to[out = -90, in = -90, distance = 1.0472] (0.75, 0)
        	to[out = 90, in = 90, distance = 1.0472] (0.833333, 0)
        	to[out = -90, in = -90, distance = 1.0472] (0.916667, 0)
        	to[out = -90, in = -90, distance = 4.18879] (0.583333, 0)
        	to[out = 90, in = 90, distance = 1.0472] (0.5, 0)
        	to[out = -90, in = -90, distance = 3.14159] (0.25, 0)
        	to[out = 90, in = 90, distance = 1.0472] (0.166667, 0)
        	to[out = -90, in = -90, distance = 1.0472] (0.0833333, 0)
        	to[out = -90, in = -90, distance = 11.5192] (1, 0);
        \end{tikzpicture}
        \caption{A singular meander realizing the permutation $\Omega(3,2,2,4) = (4,5,8,9,10,11,7,6,3,2,1)$.}
        \label{fig:realization of omega}
    \end{figure}
    
    For each $i\in\{1,\dots,k\}$, every arc of $\Omega(\lambda_1,\ldots,\lambda_k)$ with both endpoints in $I_i$ joins consecutive integers and is therefore isolated in the interlacement graph. Label the remaining arcs as follows:
    \begin{itemize}
        \item for $1\leq j\leq k-2$, let $L_j$ be the arc joining $\max I_j$ and $\min I_{j+2}$;
        \item let $L_*$ be the arc joining $\max I_{k-1}$ and $\max I_k$;
        \item let $L_0$ be the arc joining $0$ and $\min I_2$;
        \item let $L_{-1}$ be the arc joining $\min I_1$ and $N+1$.
    \end{itemize}
    
    For $-1\leq i,j\leq k-2$, the arcs $L_i$ and $L_j$ interlace if and only if $|i-j|=1$, and $L_*$ interlaces only $L_{k-2}$: here we use the condition $\lambda_k\geq2$; otherwise, $L_*$ and $L_{k-2}$ would share an endpoint instead of interlacing. Thus, we can define a proper coloring by

    $$
    \varepsilon(L_*):=k\pmod2, \qquad \text{and} \qquad
    \varepsilon(L_j):=j+1\pmod2
    \quad \text {for }-1\leq j\leq k-2.
    $$
    
    All other arcs correspond to isolated vertices and can be colored arbitrarily. Since $\varepsilon(L_0)=1$, this gives a realization of $\Omega(\lambda_1,\ldots,\lambda_k)$.

    The arcs $L_{-1},L_0,\dots,L_{k-2},L_*$ belong to a single connected component, while the $\lambda_i-1$ internal arcs of each interval $I_i$ are isolated. Hence the component spine is a bouquet of cycle blocks of lengths $\lambda_1,\ldots,\lambda_k$.

    The two arcs bounding $I_i$ have the same color for $i<k$ and opposite colors for $i=k$. A tangency coordinate is $1$ plus the colors of the two incident arcs in $\F_2$, so summing these coordinates over a block cancels all internal arc colors. Thus, the frustration parity of the block corresponding to $I_i$ is congruent to $\lambda_i$ modulo $2$ for $i<k$ and to $\lambda_k-1$ modulo $2$ for $i=k$. The block factorization therefore gives
    \begin{equation}\label{eq:tp-oscillating}
        T_{\Omega(\lambda_1,\ldots,\lambda_k)}(x)
        =
        D_{\lambda_k}(x)\prod_{i=1}^{k-1}\widehat D_{\lambda_i}(x).
    \end{equation}

    For the permutation $\pi=(m,m-1,\ldots,1)\in S_m$, where $m\geq2$, the component spine is a single cycle of length $m$ and frustration parity congruent to $m-1$ modulo $2$, so $T_\pi(x)=D_m(x)$. For $\operatorname{id}_m=(1,2,\dots,m)\in S_m$, where $m\geq1$, the component spine is a path with $m$ bridges, so $T_{\operatorname{id}_m}(x)=(1+x)^m$.

    Under a direct sum, the terminal vertex of the first component spine is identified with the initial vertex of the second, preserving all blocks and their signs. Thus, direct sums combine the corresponding formal factor data.

    To realize an arbitrary datum as in the theorem, choose one deficient factor and realize it together with all the saturated factors using $\Omega$ (or a decreasing permutation if there are no saturated factors). Realize the remaining deficient factors by decreasing permutations and, when $b>0$, the bridge factors by $\operatorname{id}_b$. Taking the direct sum gives a permutation of order equal to the visible cost and completes the proof.
\end{proof}

\subsection{Identities between the factors}
\begin{lemma}\label{lem:block factors identity}
    For every integer $m\geq1$,
    \begin{equation}\label{eq:block factors identity}
        D_{2m}(x)=2D_m(x)\widehat D_m(x).
    \end{equation}
    For every positive odd integer $r$ and every integer $a\geq0$, iteration gives
    \begin{equation}\label{eq:deficient factor factorization}
        D_{2^ar}(x)=2^aD_r(x)\prod_{j=0}^{a-1}\widehat D_{2^jr}(x).
    \end{equation}
\end{lemma}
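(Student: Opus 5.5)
The identity \eqref{eq:block factors identity} is a difference of squares. Set $u=1+x$ and $v=-(1-x)=x-1$. Then $(-1)^m(1-x)^m=v^m$, and the definitions read
$$
D_m=\tfrac{1}{2}\bigl(u^m-v^m\bigr),\qquad \widehat D_m=\tfrac{1}{2}\bigl(u^m+v^m\bigr).
$$
This holds for every $m\ge1$ regardless of parity, because the sign $(-1)^m$ is absorbed into $v^m$. For length $2m$ we have $(-1)^{2m}(1-x)^{2m}=v^{2m}$ as well. Hence
$$
2D_m\widehat D_m=\tfrac{1}{2}\bigl(u^m-v^m\bigr)\bigl(u^m+v^m\bigr)=\tfrac{1}{2}\bigl(u^{2m}-v^{2m}\bigr)=D_{2m}.
$$
The only point to check carefully is that rewriting $(-1)^m(1-x)^m$ as $(x-1)^m$ is consistent for both $m$ and $2m$, which it is.

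For \eqref{eq:deficient factor factorization} we use induction on $a$, with $r$ odd fixed. The base case $a=0$ reads $D_r=2^0D_r$ with an empty product, so there is nothing to prove. For the inductive step, apply \eqref{eq:block factors identity} with $m=2^{a}r$ to get $D_{2^{a+1}r}=2D_{2^ar}\widehat D_{2^ar}$. Substituting the inductive hypothesis for $D_{2^ar}$ gives
$$
D_{2^{a+1}r}=2^{a+1}D_r\prod_{j=0}^{a}\widehat D_{2^jr},
$$
which is the claim for $a+1$.

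Neither step presents a real obstacle. The oddness of $r$ is not used in the computation itself. It only makes the decomposition canonical: it fixes the starting point of the iteration, presumably for the normal form in Theorem~\ref{thm:polynomial normal form}.
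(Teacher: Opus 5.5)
Your proof is correct and follows the paper's argument: the paper says the first identity follows by direct substitution, and your rewriting $D_m=\tfrac12(u^m-v^m)$, $\widehat D_m=\tfrac12(u^m+v^m)$ with $u=1+x$, $v=x-1$, followed by a difference of squares, is exactly that substitution. The paper obtains the second identity by iteration, which is your induction on $a$.
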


\begin{proof}
    The first identity follows by direct substitution, and the second follows by iteration.
\end{proof}

\begin{theorem}\label{thm:polynomial normal form}
    Every polynomial of a formal factor datum, and hence every nonzero tangency polynomial, has a unique representation
    \begin{equation}\label{eq:polynomail normal form}
        T(x)=2^s(1+x)^b
        \prod_{\substack{r\geq3\\r\text{ odd}}}D_r(x)^{\alpha_r}
        \prod_{m\geq1}\widehat D_m(x)^{\beta_m},
    \end{equation}
    where all exponents are nonnegative integers and only finitely many $\alpha_r$ and $\beta_m$ are nonzero. Apart from $D_1(x)=1$, every multiplicative identity between the block factors follows from~\eqref{eq:deficient factor factorization} and commutativity.
\end{theorem}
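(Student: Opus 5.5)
The plan is to prove existence by rewriting every block factor with Lemma~\ref{lem:block factors identity}, and to prove uniqueness by locating the zeros of $D_m$ and $\widehat D_m$ exactly and reading off the exponents from multiplicities of zeros.

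\textbf{Existence.} By Theorem~\ref{thm:block factorization}, every nonzero tangency polynomial is the polynomial of a formal factor datum; the $D_1=1$ factors from positive loops are simply dropped. In a datum, a saturated factor $\widehat D_m$ already has the form required by~\eqref{eq:polynomail normal form}. A deficient factor $D_{2^ar}$ with $r$ odd is replaced using~\eqref{eq:deficient factor factorization} by $2^aD_r\prod_{j<a}\widehat D_{2^jr}$, and we discard $D_1=1$ when $r=1$. Collecting powers of $2$, powers of $1+x$, and equal factors gives a representation~\eqref{eq:polynomail normal form}.

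\textbf{Uniqueness via roots.} Substitute $\zeta=-(1+z)/(1-z)$, a Möbius map sending $z=-1$ to $\zeta=0$ and $z=\infty$ to $\zeta=1$.
\begin{itemize}
\item $D_m(z)=0$ if and only if $\zeta^m=1$. Since $\deg D_m=m-1$, the zeros of $D_m$ correspond bijectively and simply to $\zeta\in\mu_m\setminus\{1\}$.
\item $\widehat D_m(z)=0$ if and only if $\zeta^m=-1$. This gives exactly $m=\deg\widehat D_m$ simple zeros.
\end{itemize}
Hence every factor in~\eqref{eq:polynomail normal form} is, up to a positive leading constant, the product of $z-z(\zeta)$ over an explicit set of roots of unity. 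This set depends only on the orders $d$ of the $\zeta$ involved, and for each such $d$ it contains all $\zeta$ of order $d$.
\begin{itemize}
\item $D_r$ with $r$ odd contributes the orders $d\mid r$ with $d>1$; these are all odd.
\item $\widehat D_m$ with $m=2^aq$ and $q$ odd contributes the orders $d=2^{a+1}q'$ with $q'\mid q$, because $\zeta^m=-1$ means $\operatorname{ord}\zeta\mid 2m$ but $\operatorname{ord}\zeta\nmid m$.
\item The factor $1+x$ is the zero $\zeta=0$.
\end{itemize}

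\textbf{Recovering the exponents.} Given $T$, let $f(d)$ be the common multiplicity of its zeros at $z(\zeta)$ for $\zeta$ of order $d$, and let $b$ be the multiplicity of the zero at $-1$. Then
\[
f(d)=\sum_{r\ \mathrm{odd},\ d\mid r}\alpha_r\ \text{ for odd } d>1,
\qquad
f(2^{a+1}q')=\sum_{q\ \mathrm{odd},\ q'\mid q}\beta_{2^aq}.
\]
Only finitely many exponents are nonzero, so each system is triangular with respect to divisibility. Descending induction on $r$, respectively on $q$ for each fixed $a$, then recovers all $\alpha_r$ and $\beta_m$ uniquely from $f$; equivalently one can use Möbius inversion on the divisor lattice. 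Finally $2^s$ is the leading coefficient divided by the product of the leading coefficients of the factors, all of which are positive and already determined; so $s$ is determined too.

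\textbf{Completeness of identities.} Suppose two products of block factors, with powers of $2$, are equal. Reducing both sides to normal form uses only~\eqref{eq:deficient factor factorization}, $D_1=1$ and commutativity. By uniqueness the two normal forms coincide, so the identity is a formal consequence of those relations.

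The step I expect to be the main obstacle is the exact zero description. One must check that the zeros are simple and that $\zeta=1$, which corresponds to $z=\infty$, is correctly excluded for $D_m$ through the degree drop. One must also check that the orders arising from $D_r$ (odd, greater than $1$) never collide with those from $\widehat D_m$ (even), since that separation is what makes the inversion triangular.
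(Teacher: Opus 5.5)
Your proof is correct and takes essentially the paper's approach. You use the same substitution $u=-(1+x)/(1-x)$ and the same separation between odd root orders (from $D_r$, $r$ odd) and even root orders (from $\widehat D_m$), followed by a descending, triangular induction; your common multiplicity of zeros of order $d$ is exactly the paper's multiplicity of the cyclotomic factor $\Phi_d(u)$, and existence, recovering $b$ and $s$, and the completeness of identities are handled the same way.
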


\begin{proof}
    Existence follows from~\eqref{eq:deficient factor factorization}. To prove uniqueness, first note that $b$ is determined by the multiplicity of $-1$ as a root of $T(x)$ (see the proof of Corollary~\ref{cor:roots of tangency polynomials}). Let $u:=-(1+x)/(1-x)$. Then
    \begin{equation}\label{eq:polynomial factors after substitution}
        D_r(x)=2^{r-1}\frac{u^r-1}{(u-1)^r},
        \qquad
        \widehat D_m(x)=2^{m-1}\frac{u^m+1}{(u-1)^m}.
    \end{equation}
    Factoring the numerators into cyclotomic polynomials $\Phi_d(u)$ gives
    \begin{equation}\label{eq:cyclotomic factorization for odd}
        u^r-1=\prod_{d\mid r}\Phi_d(u)
    \end{equation}
    and
    \begin{equation}\label{eq:cyclotomic factorization for even}
        u^m+1=\frac{u^{2m}-1}{u^m-1}
        =\prod_{\substack{d\mid2m\\d\nmid m}}\Phi_d(u).
    \end{equation}

    Since $r$ is odd, every divisor of $r$ is also odd, and hence~\eqref{eq:cyclotomic factorization for odd} contains only odd-indexed cyclotomic factors (including $\Phi_r$ exactly once). On the other hand,~\eqref{eq:cyclotomic factorization for even} contains only even-indexed cyclotomic factors (including $\Phi_{2m}$ exactly once). The denominators are powers of $u-1=\Phi_1(u)$ and therefore do not affect the multiplicity of any $\Phi_d(u)$ with $d\geq2$.

    Suppose there are two representations
    $$
    2^s(1+x)^b
    \prod_{\substack{r\geq3\\r\text{ odd}}}D_r(x)^{\alpha_r}
    \prod_{m\geq1}\widehat D_m(x)^{\beta_m}
    =
    2^{s'}(1+x)^{b'}
    \prod_{\substack{r\geq3\\r\text{ odd}}}D_r(x)^{\alpha_r'}
    \prod_{m\geq1}\widehat D_m(x)^{\beta_m'}.
    $$
    Since $b=b'$, we may cancel $(1+x)^b$. If $\alpha_r\neq\alpha_r'$ for some $r$, choose the largest such $R$, which exists by finite support. Cancel the common factors $D_r(x)^{\alpha_r}$ for all $r>R$. No $D_r(x)$ with $r<R$ contributes $\Phi_R(u)$, since $R\nmid r$, and no $\widehat D_m(x)$ contributes it, since $R$ is odd. Comparing its multiplicity therefore gives $\alpha_R=\alpha_R'$, a contradiction. Thus all $\alpha_r$ agree.

    Similarly, if some $\beta_m\neq\beta_m'$, choose the largest such $M$ and cancel the common factors with $m>M$. No factor $\widehat D_m(x)$ with $m<M$ contributes $\Phi_{2M}(u)$, and no deficient factor of odd length contributes it. Comparing its multiplicity gives $\beta_M=\beta_M'$, again a contradiction. Thus all $\beta_m$ agree. Finally, canceling the remaining common factors yields $2^s=2^{s'}$, hence $s=s'$.

    Reducing both sides of any multiplicative identity to this unique normal form using~\eqref{eq:block factors identity} and $D_1(x)=1$ proves the last assertion.
\end{proof}

\subsection{The number of distinct tangency polynomials}
In this subsection, we use the normal form~\eqref{eq:polynomail normal form} to determine the asymptotic behavior of the number of distinct tangency polynomials. Let
$$
\mathcal T_n:=\{T_\pi(x)\mid\pi\in S_n,\ \pi \text{ is s-meandric}\},
\qquad
N_n:=|\mathcal T_n|.
$$
For $n\geq0$, consider also the set $\mathcal U_n$ of polynomials admitting a formal factor datum of visible cost at most $n$, including $1$, represented by the empty product. These polynomials need not be tangency polynomials of permutations in $S_n$.

\begin{lemma}\label{lem:tn and un comparison}
    Let $U_n=|\mathcal U_n|$. Then for $n\geq9$,
    \begin{equation}\label{eq:tn and un comparison}
        U_{n-9}\leq N_n\leq U_n.
    \end{equation}
\end{lemma}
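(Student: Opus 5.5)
The upper bound is bookkeeping via Theorem~\ref{thm:block factorization}; the lower bound needs a padding construction, and the step I expect to be hardest is building the padding gadgets.

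\textbf{Upper bound.} Let $\pi\in S_n$ be s-meandric. By Theorem~\ref{thm:block factorization}, $T_\pi(x)=(1+x)^b\prod_B E_{\lambda_B,\eta_B}(x)$. Each cycle block is either $D_{\lambda_B}$ or $\widehat D_{\lambda_B}$, according to the parity of $\lambda_B+\eta_B$. Frustrated loops give $\widehat D_1=x$ and positive loops give $D_1=1$. Dropping the positive loops yields a formal factor datum. Its visible cost is
$$
b+\sum_B\lambda_B-l_+ = n-l_+\leq n,
$$
where $l_+$ is the number of positive loops, because the bridges and cycle blocks partition the $n$ edges of $K_\pi$. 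Hence $T_\pi\in\mathcal U_n$, and so $N_n\leq U_n$.

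\textbf{Lower bound: reduction to padding.} Given $P\in\mathcal U_{n-9}$, fix a datum for $P$ of visible cost $c\leq n-9$. The goal is a permutation in $S_n$ with tangency polynomial exactly $P$; distinct $P$ then give distinct elements of $\mathcal T_n$. There are two defects to repair with the surplus $n-c\geq 9$.
\begin{enumerate}
\item Theorem~\ref{thm:visible realization} needs a deficient factor in the datum.
\item The order must be raised from $c$ to exactly $n$ without changing the polynomial.
\end{enumerate}
Both should be handled by direct sums with \emph{neutral gadgets}, using Theorem~\ref{thm:poly sum of permutations}. A neutral gadget is an s-meandric permutation $\gamma\in S_g$ with $T_\gamma(x)=1$. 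Since $T_\gamma(1)=d(\gamma)=2^{q-1}$, this means $G_\gamma$ is connected and $\gamma$ is meandric, so every edge of $K_\gamma$ is a positive loop.

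\textbf{Main obstacle: the gadgets.} I would first try to construct an explicit family of meandric permutations with connected interlacement graph for every order $g$ in a suitable range. A natural model is a zigzag in which consecutive arcs are forced to interlace, patterned on the chain $L_{-1},L_0,\dots$ in the construction of $\Omega$, but with no isolated internal arcs. Small orders (such as $g=1$) cannot work this way, which is presumably where the constant $9$ enters.

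\textbf{Handling the two defects.}
\begin{itemize}
\item If the datum contains a deficient factor, realize it in $S_c$ by Theorem~\ref{thm:visible realization}, then take the direct sum with a gadget of order $n-c$.
\item If the datum has no deficient factor, absorb the saturated factors into a modified $\Omega$-type permutation. In this version the distinguished last interval is replaced by a connected neutral tail, so that the spine still becomes a bouquet of the prescribed saturated cycle blocks together with extra positive loops. The required extra cost of this tail should stay bounded, within the allowance of $9$.
\end{itemize}

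\textbf{Checks.} If a single uniform gadget family works only for orders $g\geq g_0$, I would combine two gadgets by direct sum. The final step is to check that the threshold $n\geq 9$ covers every residual case, including $P=1$ and $c=0$.
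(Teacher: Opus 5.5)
Your upper bound is correct and is the paper's argument: discarding the positive loops $D_1=1$ from the block factorization leaves a formal factor datum of visible cost $n-l_+\le n$.

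The lower bound, however, depends on neutral gadgets, and these do not exist at any positive order. A permutation $\gamma\in S_g$ with $g\ge1$ and $T_\gamma(x)=1$ would have connected $G_\gamma$ and a non-singular realization. In a non-singular meander, every interior point $p_k$ carries exactly one upper semicircle. Take an upper semicircle of minimal diameter, with endpoints $p_a,p_b$ and $a<b$. Then $b=a+1$: otherwise the unique upper semicircle at $p_{a+1}$ has nowhere to end.
\begin{itemize}
\item It cannot end at $p_a$ or $p_b$, since each of these already carries its only upper semicircle or is $p_0$ or $p_{g+1}$.
\item It cannot end outside $[a,b]$, since it would cross the first semicircle.
\item It cannot end inside $[a,b]$, by minimality.
\end{itemize}
An arc joining consecutive integers interlaces nothing. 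So $G_\gamma$, which has $g+1\ge2$ vertices, has an isolated vertex and is disconnected, giving $d(\gamma)\ge2$.

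Hence $T_\gamma\ne1$ for every permutation of positive order, and any padding by direct sums changes the polynomial. Your stated goal, a permutation in $S_n$ with tangency polynomial exactly $P$, already fails for $P=1\in\mathcal U_{n-9}$. The ``connected neutral tail'' in your second bullet is likewise a hope, not a construction.

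The missing idea is that exact realization is unnecessary. Since $P\mapsto QP$ is injective for any fixed nonzero $Q$, it suffices to place $QP$ in $\mathcal T_n$; the paper takes $Q=2x^7$.
\begin{itemize}
\item \emph{First defect.} Adjoining $D_2(x)=2x$ to a datum of cost $v\le n-9$ supplies the deficient factor required by Theorem~\ref{thm:visible realization}. This realizes $2xP$ at order $v+2$.
\item \emph{Second defect.} Instead of a neutral gadget, the paper uses a rigid one: permutations with connected interlacement graph and $T=x^6$ at every order $\ge7$. These come from four explicit seeds of orders $7,8,9,10$ by iterating a local braid action at two consecutive touches from opposite half-planes. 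Each action adds four transverse crossings, keeps six touches, and keeps the interlacement graph connected.
\end{itemize}
The direct sum then realizes $2x^7P$ at order exactly $n$, and the constant $9=2+7$ is the visible cost of this multiplier. By the argument above, a rigid permutation of positive order has tangency polynomial $x^k$ with $k\ge1$, so a nontrivial multiplier of this kind cannot be avoided.
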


\begin{proof}
We first describe a local operation on singular meanders. Suppose that $M$ is the unique realization of its permutation and that two consecutive intersection points along $\ell$ are touches from opposite half-planes. Perform the local operation shown in Fig.~\ref{fig:local braid action}, which we call a \emph{braid action}, and perform a plane isotopy to normalize the resulting configuration and obtain a singular meander.
The action adds four transverse intersections and preserves the number of touches. The two touches in the replacement are again consecutive along $\ell$, so the action can be repeated.

\begin{figure}[ht]
    \centering
    \begin{tikzpicture}[scale = 3.5,
        intersection label/.style={font=\scriptsize, inner sep=0.5pt}
    ]
        \draw[thick] (-0.05, 0) to (1.05, 0);
        \begin{scope}
            \clip (-0.05, -0.35) rectangle (1.05, 0.35);
            \draw[ultra thick] (1.50, 0)
                to[out = -90, in = -90, distance = 15] (0.25, 0)
                to[out = -90, in = -90, distance = 26.38938] (2.35, 0);
            \draw[ultra thick] (-0.60, 0)
                to[out = 90, in = 90, distance = 14] (0.70, 0)
                to[out = 90, in = 90, distance = 14] (1.30, 0);
        \end{scope}
        \node[intersection label, above=2pt] at (0.25, 0) {$p_k$};
        \node[intersection label, below=2pt] at (0.70, 0) {$p_{k+1}$};
        \node[right] at (1.05, 0) {$\ell$};

        \draw[thick, ->] (1.25, 0) to (1.74, 0);

        \begin{scope}[xshift = 1.95cm]
            \draw[thick] (-0.05, 0) to (1.05, 0);
            \begin{scope}
                \clip (-0.05, -0.35) rectangle (1.05, 0.35);
                \draw[ultra thick] (2.35, 0)
                    to[out = -90, in = -90, distance = 28.27433] (0.10, 0)
                    to[out = 90, in = 90, distance = 6.03186] (0.58, 0)
                    to[out = 90, in = 90, distance = 4.02124] (0.26, 0)
                    to[out = -90, in = -90, distance = 15.58230] (1.50, 0);
                \draw[ultra thick] (-0.60, 0)
                    to[out = 90, in = 90, distance = 16.83894] (0.74, 0)
                    to[out = -90, in = -90, distance = 4.02124] (0.42, 0)
                    to[out = -90, in = -90, distance = 6.03186] (0.90, 0)
                    to[out = 90, in = 90, distance = 20.10619] (2.50, 0);
            \end{scope}
            \node[intersection label, above left=2pt] at (0.10, 0) {$q_1$};
            \node[intersection label, below left=2pt] at (0.26, 0) {$q_2$};
            \node[intersection label, above=2pt] at (0.42, 0) {$q_3$};
            \node[intersection label, below=2pt] at (0.58, 0) {$q_4$};
            \node[intersection label, above right=2pt] at (0.74, 0) {$q_5$};
            \node[intersection label, below right=2pt] at (0.90, 0) {$q_6$};
            \node[right] at (1.05, 0) {$\ell$};
        \end{scope}
    \end{tikzpicture}
    \caption{A braid action at two consecutive touches from opposite sides of $\ell$. The points $q_1,q_2,q_5,q_6$ are transverse intersections, and $q_3,q_4$ are touches.}
    \label{fig:local braid action}
\end{figure}

The resulting meander is again the unique realization of its permutation. Indeed, all interlacements between old arcs are preserved. The four new arcs form a $4$-cycle in the interlacement graph, and the arc $(q_1,q_4)$ interlaces the old arc incident to $q_2$. Since the old interlacement graph is connected, so is the new one. Theorem~\ref{thm:degree of realizability} therefore gives uniqueness.

Now consider the four singular meanders\footnote{These meanders are irreducible in the terminology of~\cite{Belousov26PrimeFactorization}.} in Fig.~\ref{fig:braid action examples}. Each has six touches and a connected interlacement graph, hence is the unique realization of its permutation. In each case, $p_1$ is a touch from below and $p_2$ is a touch from above. Repeated braid actions therefore show that $x^6\in\mathcal T_n$ for every $n\geq7$: the four initial orders $7,8,9,10$ cover all residue classes modulo $4$.

\begin{figure}[h]
    \centering
    \begin{tikzpicture}[scale = 5.5,
        intersection label/.style={font=\scriptsize, inner sep=0.5pt}
    ]
        \fill[gray!15] (0.085000, -0.095) rectangle (0.290000, 0.095);
        \draw[thick] (-0.05, 0) to (1.05, 0);
        \draw[ultra thick] (0, 0)
            to[out = 90, in = 90, distance = 3.14159] (0.250000, 0)
            to[out = 90, in = 90, distance = 3.14159] (0.500000, 0)
            to[out = 90, in = 90, distance = 4.71239] (0.875000, 0)
            to[out = 90, in = 90, distance = 3.14159] (0.625000, 0)
            to[out = -90, in = -90, distance = 3.14159] (0.375000, 0)
            to[out = -90, in = -90, distance = 3.14159] (0.125000, 0)
            to[out = -90, in = -90, distance = 7.85398] (0.750000, 0)
            to[out = -90, in = -90, distance = 3.14159] (1.000000, 0);
        \draw[thick, dashed, rounded corners=2pt] (0.085000, -0.095) rectangle (0.290000, 0.095);
        \node at (0.5, -0.36) {$(2,4,7,5,3,1,6)$};
    \end{tikzpicture}
    \hspace{0.4cm}
    \begin{tikzpicture}[scale = 5.5,
        intersection label/.style={font=\scriptsize, inner sep=0.5pt}
    ]
        \fill[gray!15] (0.085556, -0.095) rectangle (0.257778, 0.095);
        \draw[thick] (-0.05, 0) to (1.05, 0);
        \draw[ultra thick] (0, 0)
            to[out = 90, in = 90, distance = 2.79253] (0.222222, 0)
            to[out = 90, in = 90, distance = 2.79253] (0.444444, 0)
            to[out = -90, in = -90, distance = 2.79253] (0.666667, 0)
            to[out = -90, in = -90, distance = 4.18879] (0.333333, 0)
            to[out = -90, in = -90, distance = 5.58505] (0.777778, 0)
            to[out = -90, in = -90, distance = 8.37758] (0.111111, 0)
            to[out = -90, in = -90, distance = 9.77384] (0.888889, 0)
            to[out = 90, in = 90, distance = 4.18879] (0.555556, 0)
            to[out = 90, in = 90, distance = 5.58505] (1.000000, 0);
        \draw[thick, dashed, rounded corners=2pt] (0.085556, -0.095) rectangle (0.257778, 0.095);
        \node at (0.5, -0.36) {$(2,4,6,3,7,1,8,5)$};
    \end{tikzpicture}

    \par\medskip

    \begin{tikzpicture}[scale = 5.5,
        intersection label/.style={font=\scriptsize, inner sep=0.5pt}
    ]
        \fill[gray!15] (0.068000, -0.095) rectangle (0.232000, 0.095);
        \draw[thick] (-0.05, 0) to (1.05, 0);
        \draw[ultra thick] (0, 0)
            to[out = 90, in = 90, distance = 2.51327] (0.200000, 0)
            to[out = 90, in = 90, distance = 5.02655] (0.600000, 0)
            to[out = 90, in = 90, distance = 3.76991] (0.900000, 0)
            to[out = 90, in = 90, distance = 2.51327] (0.700000, 0)
            to[out = -90, in = -90, distance = 2.51327] (0.500000, 0)
            to[out = 90, in = 90, distance = 2.51327] (0.300000, 0)
            to[out = -90, in = -90, distance = 2.51327] (0.100000, 0)
            to[out = -90, in = -90, distance = 3.76991] (0.400000, 0)
            to[out = -90, in = -90, distance = 5.02655] (0.800000, 0)
            to[out = -90, in = -90, distance = 2.51327] (1.000000, 0);
        \draw[thick, dashed, rounded corners=2pt] (0.068000, -0.095) rectangle (0.232000, 0.095);
        \node at (0.5, -0.36) {$(2,6,9,7,5,3,1,4,8)$};
    \end{tikzpicture}
    \hspace{0.4cm}
    \begin{tikzpicture}[scale = 5.5,
        intersection label/.style={font=\scriptsize, inner sep=0.5pt}
    ]
        \fill[gray!15] (0.061818, -0.095) rectangle (0.210909, 0.095);
        \draw[thick] (-0.05, 0) to (1.05, 0);
        \draw[ultra thick] (0, 0)
            to[out = 90, in = 90, distance = 2.28479] (0.181818, 0)
            to[out = 90, in = 90, distance = 2.28479] (0.363636, 0)
            to[out = -90, in = -90, distance = 2.28479] (0.545455, 0)
            to[out = 90, in = 90, distance = 3.42719] (0.818182, 0)
            to[out = 90, in = 90, distance = 2.28479] (0.636364, 0)
            to[out = -90, in = -90, distance = 4.56959] (0.272727, 0)
            to[out = -90, in = -90, distance = 2.28479] (0.090909, 0)
            to[out = -90, in = -90, distance = 7.99678] (0.727273, 0)
            to[out = -90, in = -90, distance = 2.28479] (0.909091, 0)
            to[out = 90, in = 90, distance = 5.71199] (0.454545, 0)
            to[out = 90, in = 90, distance = 6.85438] (1.000000, 0);
        \draw[thick, dashed, rounded corners=2pt] (0.061818, -0.095) rectangle (0.210909, 0.095);
        \node at (0.5, -0.36) {$(2,4,6,9,7,3,1,8,10,5)$};
    \end{tikzpicture}
    \caption{The four initial singular meanders. The shaded areas indicate
    where the braid action is applied.}
    \label{fig:braid action examples}
\end{figure}

Let $F\in\mathcal U_{n-9}$, and choose a formal factor datum for $F$ of visible cost $v\leq n-9$. Appending $D_2(x)=2x$ and applying Theorem~\ref{thm:visible realization} realizes $2xF$ at order $v+2$. Taking the direct sum with a permutation of order $n-v-2\geq7$ and tangency polynomial $x^6$ realizes $2x^7F$ at order $n$, by Theorem~\ref{thm:poly sum of permutations}. Multiplication by $2x^7$ is injective, which proves the lower bound.

The upper bound follows from Theorem~\ref{thm:block factorization}: omitting the positive-loop factors gives a formal factor datum of visible cost at most $n$ for every polynomial in $\mathcal T_n$.
\end{proof}

Let $P(x)\in\mathcal U_n$. Consider its normal form~\eqref{eq:polynomail normal form} and define its \emph{weight} $W(P)$ by
$$
W(P):=b+\sum_{\substack{r\geq3\\r\text{ odd}}}r\alpha_r+\sum_{m\geq1}m\beta_m.
$$
Note that the weight need not equal the degree of $P$. Omitting the scalar exponent $s$, we associate with $P$ the exponent vector $(b,(\alpha_r),(\beta_m))$. As $n$ varies, the generating function for the distinct such vectors, counted by weight, is
$$
H(z)=\frac{1}{1-z}
\prod_{\substack{r\geq3\\r\text{ odd}}}\frac{1}{1-z^r}
\prod_{m\geq1}\frac{1}{1-z^m}.
$$
We can also rewrite it as
$$
H(z)=\prod_{m\geq1}\frac{1+z^m}{1-z^m},
$$
which is the generating function for overpartitions (see, for example,~\cite{Barqueroetall2023Overpartitions}).

\begin{theorem}\label{thm:eq:formal polynomial generating functions}
    \begin{equation}\label{eq:formal polynomial generating functions}
        \sum_{n\geq0}U_nz^n
        =
        \frac{H(z)}{1-z}
        \left(1+\sum_{m\geq1}\frac{z^{2m}}{1-z^{2m}}\right).
    \end{equation}
\end{theorem}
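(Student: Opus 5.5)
We count pairs. By Theorem~\ref{thm:polynomial normal form}, a polynomial in $\bigcup_n\mathcal U_n$ is the same thing as a pair $(v,s)$, where $v=(b,(\alpha_r),(\beta_m))$ is an exponent vector and $s\geq0$ is the scalar exponent, subject to the pair being attainable by some formal factor datum. Let $c(v,s)$ be the minimal visible cost of a datum producing $2^s$ times the polynomial of $v$ (infinite if there is none). Then $U_n=\#\{(v,s): c(v,s)\leq n\}$. Hence $\sum_n U_nz^n=\frac{1}{1-z}\sum_{(v,s)}z^{c(v,s)}$, and it suffices to prove
$$
\sum_{(v,s)}z^{c(v,s)}=H(z)\Bigl(1+\sum_{m\geq1}\frac{z^{2m}}{1-z^{2m}}\Bigr).
$$
The $s=0$ pairs should account for the term $H(z)$. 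Every $v$ is attainable with $s=0$ by taking the bridges, the saturated factors $\widehat D_m^{\beta_m}$, and the odd deficient factors $D_r^{\alpha_r}$ literally. The visible cost of this datum is exactly $W(v)$. Conversely, any datum has cost at least the weight of its normal form, as shown in the next step. So $c(v,0)=W(v)$, and these pairs contribute $H(z)$.

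Next I would track cost through~\eqref{eq:deficient factor factorization}. A deficient factor $D_{2^ar}$ with $r$ odd has cost $2^ar$. Its normal form contributes $2^a$, the factor $D_r$ (of weight $r$ if $r\geq3$, weight $0$ if $r=1$), and the factors $\widehat D_{2^jr}$ for $j<a$, of total weight $(2^a-1)r$. So its cost equals its normal-form weight, plus $1$ exactly when $r=1$ and $a\geq1$. It raises $s$ by $a$, and it forces specific saturated factors (and $D_r$) to be present in $v$. Thus, for a general datum,
$$
\text{cost}=W(v)+\#\{\text{deficient factors of length }2^a,\ a\geq1\}.
$$
Minimizing the cost for a given pair $(v,s)$ becomes a combinatorial packing problem. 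We must choose a multiset of ``chains'' $D_{2^ar}$, each consuming $\widehat D_r,\widehat D_{2r},\dots,\widehat D_{2^{a-1}r}$ (and a $D_r$ if $r\geq3$) from $v$. The chains must realize total $\sum a=s$, and chains with $r=1$ each cost an extra $1$.

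The last step, and the one I expect to be the main obstacle, is to show that the pairs with $s\geq1$ contribute exactly $H(z)\sum_{k\geq1}\tau(k)z^{2k}$, where $\tau(k)$ is the number of divisors of $k$; this is the expansion of $\sum_m z^{2m}/(1-z^{2m})$. My plan is to find a canonical optimal chain decomposition for each attainable pair $(v,s)$ with $s\geq1$. Removing the consumed factors should leave a free exponent vector $v'$, weighted by $H$, together with a ``chain part'' of cost $2k$ that is indexed by a divisor $m$ of $k$, written $2k=2m\cdot j$. The natural guess is that the chain part is $j$ copies of $D_{2m}$, i.e.\ $D_{2m}^j$, with a greedy normalization: always extend the longest available chain first, and use $r=1$ chains only when forced. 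I would try to prove this is a bijection $(v,s)\leftrightarrow(v',m,j)$ that preserves the cost $c(v,s)=W(v')+2mj$. I would check it against small coefficients, namely $U_0=1$, $U_1=2$, $U_2=5$, and against the first few values computed by brute force from the normal form. The delicate points are the uniqueness of the canonical decomposition and the extra cost of the $r=1$ chains, which explains why only even powers $z^{2k}$ appear. If the greedy bijection fails, the fallback is to compute $\sum_{s}z^{c(v,s)}$ directly for each $v$ as a function of the multiplicities $\beta_{2^jr}$ along each dyadic chain $r,2r,4r,\dots$. Because chains with different odd parts $r$ interact only through the total $s$, the generating function factorizes over $r$, which I would then sum.
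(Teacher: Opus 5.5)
\textbf{There is a genuine gap: the $s\ge1$ part, which is the whole content of the theorem, is only conjectured, and the conjectured mechanism is false.}

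Your reduction is sound and worth keeping. $U_n=\#\{(v,s):c(v,s)\le n\}$ accounts for the factor $1/(1-z)$, and the pairs with $s=0$ give $H(z)$. Your bookkeeping (visible cost equals $W(v)$ plus the number of deficient factors of length $2^a$, $a\ge1$) and the chain-packing reformulation are exactly the right setup. What is missing is a proof that the pairs with $s\ge1$ contribute $H(z)\sum_{m\ge1}z^{2m}/(1-z^{2m})$.

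The proposed bijection cannot work as stated. Take $v$ to be the exponent vector of $\widehat D_1^{2}\widehat D_2^{2}$ and $s=3$. The packing constraints force the unique datum $D_4D_2\widehat D_2$ for $8x^2(1+x^2)^2$, of cost $8$. Its chain part $D_4D_2$ is not of the form $D_{2m}^j$, and no choice of optimal or greedy decomposition can make it so.

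The fallback is too vague to check, and its guiding claim is off. For fixed $v$ and budget, the number of attainable scalars is one plus a \emph{sum} of independent contributions from the odd parts $r$. So nothing factorizes over $r$.

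\textbf{How to close the gap.} You need the explicit solution of your packing problem, followed by a layer-counting summation; this is how the paper argues.

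Fix $v$ and a budget $n\ge W(v)$, and put $\alpha_1:=n-W(v)$. For odd $r$, let $t_j$ be the number of deficient factors $D_{2^ar}$ with $a>j$. These contribute $\sum_j t_j$ to $s$, subject to $\alpha_r\ge t_0\ge t_1\ge\cdots\ge0$ and $t_j\le\beta_{2^jr}$. For $r=1$, the inequality $t_0\le\alpha_1$ is exactly your extra unit of cost.

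Hence the attainable $s$ are precisely $0\le s\le\sum_{r\text{ odd}}\sum_{j\ge0}\min(\alpha_r,\beta_r,\beta_{2r},\dots,\beta_{2^jr})$. Every smaller value is reached because decreasing the last nonzero $t_j$ lowers $s$ by one. So $U_n$ is the sum of one plus this bound over all $(v,\alpha_1)$ with $W(v)+\alpha_1=n$.

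Now write each minimum as the number of $h\ge1$ with $\alpha_r,\beta_r,\dots,\beta_{2^jr}\ge h$. Lowering these entries by $h$ removes weight $h(r+r+2r+\cdots+2^jr)=h\,2^{j+1}r$. This yields the term $\frac{H(z)}{1-z}\cdot\frac{z^{2^{j+1}r}}{1-z^{2^{j+1}r}}$. Since every positive even number equals $2^{j+1}r$ with $r$ odd in exactly one way, the formula follows.

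A cost-preserving bijection of the kind you wanted does exist. However, it matches a pair $(v,s)$ with a single layer $(r,j,h)$ of this staircase, not with an actual chain decomposition. Working with the slack $\alpha_1$ instead of minimal costs avoids the greedy question entirely.

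\textbf{Check values.} Your sanity-check numbers are wrong: $U_1=3$ (the polynomials $1$, $1+x$, $x$) and $U_2=8$. The numbers $1,2,5$ are the first coefficients of $H(z)\bigl(1+\sum_{m\ge1}z^{2m}/(1-z^{2m})\bigr)$. These count pairs of minimal cost exactly $n$, not at most $n$.
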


\begin{proof}
Fix a finitely supported vector $(b,(\alpha_r),(\beta_m))$ of nonnegative integers with
$$
W:=b+\sum_{\substack{r\geq3\\r\text{ odd}}}r\alpha_r+\sum_{m\geq1}m\beta_m\leq n,
$$
and put $\alpha_1:=n-W$, which records the remaining cost budget. We claim that the possible scalar exponents $s$ for polynomials in $\mathcal U_n$ with this vector are precisely
\begin{equation}\label{eq:scalar range}
    0\leq s\leq
    \sum_{\substack{r\geq1\\r\text{ odd}}}\sum_{j\geq0}
    \min(\alpha_r,\beta_r,\beta_{2r},\ldots,\beta_{2^jr}).
\end{equation}
Indeed, fix a positive odd integer $r$, and let $t_j$ count the deficient factors $D_{2^ar}$ with $a>j$ in a formal factor datum before reduction. By~\eqref{eq:block factors identity}, their contribution to $s$ is $\sum_jt_j$, and the constraints are
$$
\alpha_r\geq t_0\geq t_1\geq\cdots\geq0,
\qquad
t_j\leq\beta_{2^jr}.
$$
For $r=1$, the first inequality accounts for the extra unit of cost for each deficient factor whose length is a power of two.

These conditions are also sufficient: for each $j\geq0$, take $t_j-t_{j+1}$ deficient factors of length $2^{j+1}r$, retain $\beta_{2^jr}-t_j$ saturated factors of length $2^jr$, and, for $r\geq3$, retain $\alpha_r-t_0$ deficient factors of length $r$. The resulting total visible cost is $W$ plus the value of $t_0$ for $r=1$, and is therefore at most $n$. The largest admissible $t_j$ is the corresponding successive minimum in~\eqref{eq:scalar range}. Repeatedly decreasing the last nonzero entry shows that every smaller sum is possible. Summing over positive odd $r$ proves the claim.

By uniqueness of the normal form, the number of polynomials associated with the fixed exponent vector is one plus the upper bound in~\eqref{eq:scalar range}. The exponent vectors together with $\alpha_1$, counted by $W+\alpha_1$, have generating function $H(z)/(1-z)$. To sum a minimum in~\eqref{eq:scalar range}, write it as the number of positive integers $h$ for which $\alpha_r,\beta_r,\ldots,\beta_{2^jr}\geq h$. This restriction adds weight
$$
h\left(r+\sum_{i=0}^j2^ir\right)=h2^{j+1}r.
$$
Its contribution is therefore
$$
\frac{H(z)}{1-z}\,
\frac{z^{2^{j+1}r}}{1-z^{2^{j+1}r}}.
$$
Every positive even integer occurs exactly once as $2^{j+1}r$ with $r$ positive and odd. Adding the contribution of $s=0$ completes the proof.
\end{proof}

\begin{theorem}\label{thm:tangecy polinomial asymptotic}
    The number $N_n$ of distinct nonzero tangency polynomials satisfies
    \begin{equation}\label{eq:tangecy polinomial asymptotic}
        N_n=\frac{e^{\pi\sqrt n}}{8\pi^2}
        \left(\log n+2\gamma-2\log\pi
        +O\!\left(\frac{\log n}{\sqrt n}\right)\right)
    \end{equation}
    as $n\to\infty$, where $\gamma$ is Euler's constant. In particular,
    $N_n\sim(\log n)e^{\pi\sqrt n}/(8\pi^2)$.
\end{theorem}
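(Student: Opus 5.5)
The plan is to first obtain the asymptotics of $U_n$ from the generating function of Theorem~\ref{thm:eq:formal polynomial generating functions}, and then transfer them to $N_n$ via Lemma~\ref{lem:tn and un comparison}.

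\textbf{Saddle-point asymptotics for $U_n$.} Write $z=e^{-t}$ with $t\to0^+$. The factor $H(z)=\prod_{m\ge1}(1+z^m)/(1-z^m)$ is the overpartition generating function. By the modular transformations of $\eta$,
$$
H(e^{-t})=\frac{\eta(2\tau)}{\eta(\tau)^2}
\quad\text{with}\quad
H(e^{-t})=\sqrt{\frac{t}{4\pi}}\,e^{\pi^2/(4t)}\bigl(1+O(e^{-c/t})\bigr).
$$
The remaining factors are
$$
\frac{1}{1-z}\sim \frac1t
\qquad\text{and}\qquad
1+\sum_{m\ge1}\frac{z^{2m}}{1-z^{2m}}=1+\sum_{k\ge1}d(k)z^{2k}.
$$
The Lambert series has the standard expansion
$$
\sum_{k\ge1}d(k)e^{-2kt}=\frac{\log(1/(2t))+\gamma}{2t}+\frac14+O(t).
$$
So the full generating function behaves like $t^{-3/2}(\log(1/t)+\gamma-\log2)\,e^{\pi^2/(4t)}$, up to explicit constants. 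I would then apply the saddle-point method (or Ingham's Tauberian theorem in a version that allows a logarithmic factor) with saddle point $t_n=\pi/(2\sqrt n)$, which satisfies $e^{\pi^2/(4t)+nt}=e^{\pi\sqrt n}$. The Gaussian width is of order $n^{-3/4}$. The logarithm varies only by $O(n^{-1/4})$ relative corrections over that window, and these cancel to first order by symmetry. Evaluating constants, the log factor becomes $\log(2\sqrt n/\pi)+\gamma-\log 2=\tfrac12(\log n+2\gamma-2\log\pi)$, which produces the stated bracket and prefactor $e^{\pi\sqrt n}/(8\pi^2)$ after collecting $t^{-3/2}$ and the Gaussian factor $(2\pi\cdot \pi^2/(2t^3))^{-1/2}$. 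The error $O(\log n/\sqrt n)$ relative to the main term comes from the next terms: the $O(t)$ relative corrections, the $\tfrac14$ term, and the cubic term of the saddle expansion.

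\textbf{Minor arcs.} I would bound $|H(z)|$ away from the positive real axis by the usual estimate for $\prod(1-z^m)^{-1}$, namely $\log|P(re^{i\theta})|\le \log P(r)-c\,\min(\theta^2/t^3,1/t)$. This makes the contribution off $|\theta|\le t^{3/2-\epsilon}$ exponentially smaller. The Lambert series factor grows at most like $t^{-1}\log(1/t)$ in absolute value, so it does not spoil this bound.

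\textbf{Transfer to $N_n$.} By Lemma~\ref{lem:tn and un comparison}, $U_{n-9}\le N_n\le U_n$. From the asymptotic for $U_n$, the ratio satisfies $U_{n-9}/U_n=\exp(-9\pi/(2\sqrt n)+O(n^{-3/2}))=1+O(n^{-1/2})$, so both bounds agree with the main formula up to a relative error $O(1/\sqrt n)$. Since the bracket is of size $\log n$, this relative error is absorbed into the term $O(\log n/\sqrt n)$ inside the bracket. This yields \eqref{eq:tangecy polinomial asymptotic}.

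The main obstacle is the rigorous saddle-point step with the logarithmic factor and tracking constants to second order. I would first try the direct Cauchy integral on $|z|=e^{-t_n}$, using explicit Mellin-based expansions for both $\log H(e^{-t})$ and the divisor Lambert series rather than a Tauberian theorem.
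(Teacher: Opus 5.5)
Your proposal is correct, but it takes a different route from the paper, which never writes a Cauchy integral. The paper's argument runs as follows:
\begin{itemize}
\item It imports the known overpartition asymptotic $[z^n]H(z)=\frac{e^{\pi\sqrt n}}{8n}(1+O(n^{-1/2}))$.
\item Summing this gives $h_n=[z^n]\,H(z)/(1-z)=\frac{e^{\pi\sqrt n}}{4\pi\sqrt n}(1+O(n^{-1/2}))$.
\item It treats the Lambert factor as a convolution $U_n=h_n+\sum_k a_k h_{n-k}$ with $0\le a_k\le k$.
\item Three real-variable inputs finish the estimate: the local ratio estimate $h_{n-k}/h_n=e^{-t_nk}(1+O(n^{-1/2}+k/n+k^2/n^{3/2}))$ for $k\le n^{5/8}$, a crude tail bound, and the expansion $L(e^{-t})=\frac{\log(1/(2t))+\gamma}{2t}+O(1)$.
\end{itemize}
This yields $U_n=h_n(1+L(e^{-t_n}))+O(e^{\pi\sqrt n}\log n/\sqrt n)$, the same main term you obtain at the saddle $t_n=\pi/(2\sqrt n)$. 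The transfer to $N_n$ through $U_{n-9}\le N_n\le U_n$ is the same as yours.

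What the paper's route buys is economy. It needs only positivity and behaviour on the positive real axis: no modular transformation, no expansions uniform in complex $s=t-i\theta$, and no minor arcs.

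Your saddle-point route is self-contained, since it does not rely on the external overpartition coefficient estimate, and it would deliver further terms of the expansion. The price is the full major/minor arc analysis. That analysis does go through:
\begin{itemize}
\item Your minor-arc bound is clean because $H(z)=P(z)\prod_m(1+z^m)$ gives $|H(z)|/H(r)\le|P(z)|/P(r)$.
\item On the major arc, the odd-order terms in $\theta$, including the linear term coming from the logarithm and the prefactor, vanish by symmetry. The surviving corrections therefore have relative size $O(t_n)=O(n^{-1/2})$, as you claim.
\end{itemize}

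Two small corrections.

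First, an Ingham-type Tauberian theorem would give only $U_n\sim$ the leading term. It cannot produce the constant $2\gamma-2\log\pi$ or the $O(\log n/\sqrt n)$ error, so the Cauchy-integral computation is the part of your plan that actually proves the statement.

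Second, an asymptotic with relative error $O(n^{-1/2})$ does not justify $U_{n-9}/U_n=\exp(-9\pi/(2\sqrt n)+O(n^{-3/2}))$. What you need, and what is true, is only that the main-term expression changes by a factor $1+O(n^{-1/2})$ when $n$ is replaced by $n-9$. That is exactly how the paper concludes.
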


\begin{proof}
    Put $h_n=[z^n](H(z)/(1-z))$. Since $H(z)$ counts overpartitions, the classical estimate for overpartitions (see~\cite[Proposition~5.1]{Barqueroetall2023Overpartitions}) gives
    $$
    [z^n]H(z)=\frac{e^{\pi\sqrt n}}{8n}
    \left(1+O(n^{-1/2})\right).
    $$
    Summation gives
    \begin{equation}\label{eq:overpartitions summing}
        h_n=\frac{e^{\pi\sqrt n}}{4\pi\sqrt n}
        \bigl(1+O(n^{-1/2})\bigr).
    \end{equation}
    We also need the estimate
    \begin{equation}\label{eq:lambert}
        L\left(e^{-t}\right):=
        \sum_{m\geq1}\frac{1}{e^{2mt}-1}
        =
        \frac{\log(1/(2t))+\gamma}{2t}+O(1),
        \qquad t\to0^+.
    \end{equation}
    For completeness, the function
    $f(y)=(e^y-1)^{-1}-e^{-y}/y$
    extends smoothly to $0$, has an integrable derivative on $[0;\infty)$, and satisfies
    $\int_0^\infty f(y)\,dy=\gamma$.
    Comparing its Riemann sum with its integral and using
    $\sum_{m\geq1}e^{-mu}/m=-\log(1-e^{-u})$ gives
    $$
    \sum_{m\geq1}\frac{1}{e^{mu}-1}
    =
    \frac{\log(1/u)+\gamma}{u}+O(1),
    \qquad u\to0^+.
    $$
    Taking $u=2t$ proves~\eqref{eq:lambert}.

    Write $L(z)=\sum_{k\geq1}a_kz^k$ and set $t_n=\pi/(2\sqrt n)$. Uniformly for $0\leq k\leq n^{5/8}$, \eqref{eq:overpartitions summing} gives
    $$
    \frac{h_{n-k}}{h_n}
    =
    e^{-t_nk}
    \left(1+O\!\left(n^{-1/2}+\frac{k}{n}
    +\frac{k^2}{n^{3/2}}\right)\right).
    $$
    Since $0\leq a_k\leq k$ and $h_j$ is nondecreasing, the preceding estimate at $k=\lfloor n^{5/8}\rfloor$ shows that the convolution terms with $k>n^{5/8}$ contribute
    $O(h_nn^2e^{-cn^{1/8}})$ for some constant $c>0$.
    The corresponding tail of $h_nL(e^{-t_n})$ satisfies the same bound.

    Moreover, positivity and~\eqref{eq:lambert} imply
    $$
    \sum_{k\geq1}k^ja_ke^{-tk}
    =
    O\!\left(t^{-j-1}\log(1/t)\right),
    \qquad j=0,1,2,
    \qquad t\to0^+.
    $$
    For $j>0$, this follows from
    $k^je^{-tk}\leq C_jt^{-j}e^{-tk/2}$.
    Consequently, the convolution in~\eqref{eq:formal polynomial generating functions} yields
    $$
    \begin{aligned}
        U_n
        &=h_n+\sum_{k=1}^na_kh_{n-k}\\
        &=h_n\bigl(1+L(e^{-t_n})\bigr)
        +O\!\left(\frac{e^{\pi\sqrt n}\log n}{\sqrt n}\right)\\
        &=\frac{e^{\pi\sqrt n}}{8\pi^2}
        \left(\log n+2\gamma-2\log\pi
        +O\!\left(\frac{\log n}{\sqrt n}\right)\right).
    \end{aligned}
    $$

    For every fixed integer $k\geq0$, replacing $n$ by $n-k$ changes the main expression by
    $O(e^{\pi\sqrt n}\log n/\sqrt n)$.
    Thus, $U_n$ and $U_{n-9}$ have the same expansion with the stated error term, and the comparison in Lemma~\ref{lem:tn and un comparison} proves~\eqref{eq:tangecy polinomial asymptotic}.
\end{proof}

\subsection{Tangency polynomials of meandric permutations}
The same realization constructions give a sharp asymptotic formula for the number of distinct tangency polynomials of meandric permutations. Let $p(j)$ be the integer-partition function, with $p(0)=1$, and define
$$
H_n=\sum_{j=0}^n p(j).
$$

\begin{theorem}\label{thm:ordinary tangecy polinomial asymptotic}
    Let $O_n$ be the number of distinct tangency polynomials of meandric permutations in $S_n$. For $n\geq16$,
    \begin{equation}\label{eq:ordinary tangecy polinomial comparison}
        H_{n-16}\leq O_n\leq H_n.
    \end{equation}
    Consequently,
    \begin{equation}\label{eq:ordinary tangecy polinomial asymptotic}
        O_n\sim\frac{\exp\!\left(\pi\sqrt{2n/3}\right)}{2\pi\sqrt{2n}}
    \end{equation}
    as $n\to\infty$.
\end{theorem}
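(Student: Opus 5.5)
The plan is to show that the tangency polynomials of meandric permutations are exactly the ``positive'' part of the normal form of Theorem~\ref{thm:polynomial normal form}, and that this part is counted by ordinary partitions.

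\textbf{Upper bound.} By Corollary~\ref{cor:meandric permutation balanced spine}, a meandric $\pi$ has $\eta_B=0$ for every cycle block $B$. A positive block of odd length $\lambda$ contributes $E_{\lambda,0}=D_\lambda$, which equals $1$ when $\lambda=1$. A positive block of even length $\lambda$ contributes $E_{\lambda,0}=\widehat D_\lambda$. Hence Theorem~\ref{thm:block factorization} gives
$$T_\pi(x)=(1+x)^b\prod_{r\ \mathrm{odd}\geq3}D_r(x)^{\alpha_r}\prod_{m\ \mathrm{even}}\widehat D_m(x)^{\beta_m}.$$
This is already in normal form~\eqref{eq:polynomail normal form}, with $s=0$. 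Its weight is $b+\sum r\alpha_r+\sum m\beta_m\le n$, since the bridges and cycle blocks partition the $n$ edges of $K_\pi$. By uniqueness of the normal form, distinct polynomials give distinct exponent vectors. Such a vector is a partition of its weight $w$: the $b$ parts equal to $1$ come from bridges, the odd parts $r\ge3$ from the $\alpha_r$, and the even parts from the $\beta_m$. Indeed,
$$\frac1{1-z}\prod_{r\ \mathrm{odd}\ge3}\frac1{1-z^r}\prod_{m\ \mathrm{even}}\frac1{1-z^m}=\prod_{m\ge1}\frac1{1-z^m}.$$
Therefore $O_n\le\sum_{w\le n}p(w)=H_n$.

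\textbf{Lower bound.} Given such a vector of weight $w\le n-16$, I realize it as follows.
\begin{itemize}
\item The factors $D_r$ are realized by decreasing permutations $(r,\dots,1)$. For odd $r$ the frustration parity is $r-1\equiv0$, so these are meandric.
\item The factor $(1+x)^b$ is realized by $\operatorname{id}_b$.
\item All the $\widehat D_m$ with $m$ even are realized at once by $\Omega(m_1,\dots,m_{k-1},3)$. Its polynomial is $D_3\prod\widehat D_{m_i}$ by~\eqref{eq:tp-oscillating}, and every block is positive, so the permutation is meandric.
\end{itemize}
Taking direct sums (Theorem~\ref{thm:poly sum of permutations}) realizes $D_3(x)\,P(x)$ at order $w+3$, where $P$ is the target polynomial. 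To uniformize, I include the factor $D_3$ in every case, using a decreasing permutation of order $3$ when there are no $\widehat D$ factors. This produces an injective map $P\mapsto D_3P$.

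It remains to pad the order to exactly $n$. For this I need meandric permutations $\rho_k\in S_k$ with $T_{\rho_k}=1$ for every $k\ge13$. Then $\pi\oplus\rho_{n-w-3}$ lies in $S_n$, is meandric, and has tangency polynomial $D_3P$. This gives $O_n\ge H_{n-16}$.

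\textbf{Main obstacle.} The key step is constructing the padding family $\rho_k$. Its members must be non-singular meanders whose interlacement graph is connected, so that the spine is a single vertex carrying only positive loops and $T=1$. I would mimic the braid-action argument of Lemma~\ref{lem:tn and un comparison}. First, find a local move on non-singular meanders that adds a fixed number of transverse crossings (say $4$) and keeps the interlacement graph connected. Second, exhibit by hand small connected non-singular meanders covering all residues modulo that step, with orders $13$ to $16$. If this particular move is awkward, I would use any spiral-type family; only the constant $16$ depends on the choice.

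\textbf{Asymptotics.} By Hardy--Ramanujan, $p(j)\sim e^{\pi\sqrt{2j/3}}/(4\sqrt3\,j)$. Locally $p(n-k)/p(n)\approx e^{-kc}$ with $c=\pi/\sqrt{6n}$. Summing gives
$$H_n\sim\frac{p(n)}{c}=p(n)\frac{\sqrt{6n}}{\pi}=\frac{e^{\pi\sqrt{2n/3}}}{2\pi\sqrt{2n}}.$$
A fixed shift $n\mapsto n-16$ changes this only by a factor $1+o(1)$. Hence $H_{n-16}\sim H_n$, and~\eqref{eq:ordinary tangecy polinomial asymptotic} follows from~\eqref{eq:ordinary tangecy polinomial comparison}.
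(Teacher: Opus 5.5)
\textbf{The padding step cannot work.} Your upper bound is correct. Your realization of $D_3P$ at order $w+3$ is also correct, and normal-form uniqueness makes distinct partitions give distinct $P$. The problem is padding to order exactly $n$: there is \emph{no} meandric $\rho\in S_k$ with $T_\rho=1$ for any $k\ge1$.

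In a non-singular meander, every interior point $p_i$ carries exactly one upper semicircle. Together with $p_0$, and with $p_{k+1}$ when $k$ is even, this means the upper semicircles form a noncrossing perfect matching of a set of consecutive integers: $\{0,\dots,k+1\}$ or $\{0,\dots,k\}$. A shortest semicircle in such a matching joins two consecutive integers, and an arc joining consecutive integers interlaces nothing. Hence $G_\rho$ has an isolated vertex among its $k+1\ge2$ vertices, so it is disconnected. Then $T_\rho(1)=d(\rho)\ge2$, and $T_\rho\ne1$.

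So your search for a connectivity-preserving move on non-singular meanders, or for a spiral family, is doomed. Connected interlacement graphs only occur in the presence of touches. This is exactly why the braid-action family of Lemma~\ref{lem:tn and un comparison} produces $x^6$ rather than $1$.

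\textbf{The repair.} Pad with a fixed nonconstant polynomial $Q$ with $Q(0)=1$ that is realized by meandric permutations at every order $\ge13$. Then use that $P\mapsto Q\,D_3P$ is injective. The paper takes $Q=(1+x^2)^6$. Start from the unique realizations with tangency polynomial $x^6$, which exist at every order $\ge7$. A safe pinch at each of the six forced touches adds an isolated arc, replacing a factor $x$ by $1+x^2$ and raising the order by one. Taking the direct sum with such a permutation of order $n-w-3\ge13$ gives the meandric polynomial $Q\,D_3P$ at order exactly $n$. This is where the constant $16=13+3$ comes from.

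\textbf{A minor point.} Your asymptotic step is only a heuristic local-sum sketch and needs tail control. It can be made rigorous as in the paper, by Stolz--Ces\`aro with $B_n-B_{n-1}\sim p(n)$.
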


\begin{proof}
    By Corollary~\ref{cor:meandric permutation balanced spine}, for a meandric permutation we have
    $$
    T_\pi(x)=(1+x)^b\prod_{\lambda\geq2}E_{\lambda,0}(x)^{c_\lambda},
    \qquad b+\sum_{\lambda\geq2}\lambda c_\lambda\leq n.
    $$
    These products have unique factor data. The multiplicity of $-1$ as a root determines $b$. Among the remaining factors, if any, the largest length $L$ is determined by the root of smallest positive imaginary part, $i\tan(\pi/(2L))$. This root is simple in $E_{L,0}$ and does not occur in any shorter factor, so its multiplicity determines $c_L$. Dividing by $E_{L,0}^{c_L}$ and repeating recovers all lengths. Thus, products of visible cost $j$ are counted by $p(j)$, with parts of size one representing bridges. Positive loops contribute the factor one and account for the unused cost. This proves the upper bound.

    A safe pinch, defined in the proof of Lemma~\ref{lem:bound on gn}, adds an isolated vertex to the interlacement graph. At a forced touch, the two old incident arcs have the same color in every realization. The new isolated arc can independently take either color, giving two touches or two transverse intersections. Thus, a safe pinch replaces a factor $x$ by $1+x^2$ in the tangency polynomial, increases the order by one, and preserves every other forced touch. The proof of Lemma~\ref{lem:tn and un comparison} realizes $x^6$ at every order at least $7$. Applying safe pinches at the six original touches of such a realization therefore realizes
    $$
    Q(x)=(1+x^2)^6
    $$
    as a tangency polynomial in $\mathcal T_n$ for every $n\geq13$. These permutations are meandric since $Q(0)=1$.

    Let $F(x)$ be any product of $1+x$ and the factors $E_{\lambda,0}(x)$ for $\lambda\geq2$, with visible cost $v\leq n-16$. Adjoin the deficient factor $D_3(x)=1+3x^2$. Theorem~\ref{thm:visible realization} realizes $(1+3x^2)F(x)$ at order $v+3$. Taking its direct sum with a permutation having tangency polynomial $Q(x)$ at order $n-v-3\geq13$ gives the polynomial $Q(x)(1+3x^2)F(x)$, by Theorem~\ref{thm:poly sum of permutations}. Its constant term is one, so the resulting permutation is meandric. Multiplication by the fixed nonzero polynomial $Q(x)(1+3x^2)$ is injective, proving the lower bound.

    Finally, the Hardy--Ramanujan formula~\cite[formula~(1.41)]{HardyRamanujan1918AsymptoticFormulae} gives
    $$
    p(n)\sim\frac{\exp\!\left(\pi\sqrt{2n/3}\right)}{4\sqrt{3}\,n}.
    $$
    Put $B_n=\exp\!\left(\pi\sqrt{2n/3}\right)/(2\pi\sqrt{2n})$. Then $B_n-B_{n-1}\sim p(n)$, so the Stolz--Ces\`aro theorem gives $H_n\sim B_n$. Since $H_{n-16}/H_n\to1$, comparison~\eqref{eq:ordinary tangecy polinomial comparison} proves~\eqref{eq:ordinary tangecy polinomial asymptotic}.
\end{proof}

\begin{remark}
    The values $N_n$ and $O_n$ were computed for $n \leq 24$ by recursively generating the realizable multisets of bridge and signed cycle factors at each order. The results are presented in Appendix~\ref{appendix}. For details on the enumeration algorithm see~\cite{Bcode}.
\end{remark}
    
\bibliography{s_meandric_biblio}
\bibliographystyle{alpha}
\newpage

\appendix
\section{Enumeration data and related meandric numbers}\label{appendix}
\begin{table}[h!]
  \centering
  \caption{Enumeration data through $n=24$. Here $m_n$ counts meandric permutations, $g_n$ counts sum-indecomposable s-meandric permutations, $s_n$ counts all $s$-meandric permutations, and $M_n^{\mathrm{sing}}$ counts singular meanders with $n$ interior intersections. The numbers $N_n$ and $O_n$ count distinct tangency polynomials of $s$-meandric and meandric permutations, respectively.}
  \label{tab:meandric-enumeration}
  \small
  \begin{tabular}{r|rrrr|rr}
    \hline
    $n$ & $m_n$ & $g_n$ & $s_n$ & $M_n^{\mathrm{sing}}$ & $N_n$ & $O_n$ \\
    \hline
    1 & 1 & 1 & 1 & 2 & 1 & 1 \\
    2 & 1 & 1 & 2 & 6 & 2 & 1 \\
    3 & 2 & 3 & 6 & 24 & 4 & 2 \\
    4 & 3 & 10 & 21 & 112 & 9 & 2 \\
    5 & 8 & 41 & 84 & 576 & 21 & 4 \\
    6 & 14 & 181 & 365 & 3180 & 46 & 5 \\
    7 & 42 & 861 & 1696 & 18540 & 96 & 10 \\
    8 & 81 & 4295 & 8287 & 112840 & 191 & 13 \\
    9 & 262 & 22264 & 42146 & 711016 & 362 & 25 \\
    10 & 538 & 118947 & 221383 & 4610036 & 654 & 34 \\
    11 & 1828 & 651465 & 1194310 & 30614932 & 1144 & 60 \\
    12 & 3926 & 3642759 & 6588823 & 207498060 & 1936 & 85 \\
    13 & 13820 & 20730797 & 37048242 & 1431260984 & 3184 & 139 \\
    14 & 30694 & 119779547 & 211762404 & 10024424284 & 5107 & 197 \\
    15 & 110954 & 701259531 & 1227800670 & 71158769844 & 8020 & 306 \\
    16 & 252939 & 4153476304 & 7208589543 & 511160782120 & 12351 & 427 \\
    17 & 933458 & 24854788513 & 42794919790 & 3710968813976 & 18699 & 630 \\
    18 & 2172830 & 150105419419 & 256585250332 & 27198487483336 & 27875 & 866 \\
    19 & 8152860 & 914044681124 & 1552128503360 & 201060296071624 & 40990 & 1232 \\
    20 & 19304190 & 5607630602158 & 9464640534050 & 1497907249045496 & 59529 & 1663 \\
    21 & 73424650 & 34636704299035 & 58135013155354 & 11238798692083888 & 85488 & 2296 \\
    22 & 176343390 & 215269557208478 & 359458448025994 & \textemdash & 121506 & 3045 \\
    23 & 678390116 & 1345531555533771 & 2236108374178094 & \textemdash & 171098 & 4101 \\
    24 & 1649008456 & 8454227740602060 & 13988018295583591 & \textemdash & 238865 & 5357 \\
    \hline
  \end{tabular}
\end{table}
\end{document}